\newcommand{\mathsym}[1]{{}}
\newcommand{\unicode}[1]{{}}
\newcommand{\R}{\ensuremath{\mathbb{R}}}
\newcommand{\CC}{\mathcal{C}}
\newcommand{\C}{\mathbb{C}}
\newcommand{\Z}{\mathbb{Z}}
\newcommand{\ov}{\overline}
\newcommand{\vertiii}[1]{{\left\vert\kern-0.25ex\left\vert\kern-0.25ex\left\vert #1 
		\right\vert\kern-0.25ex\right\vert\kern-0.25ex\right\vert}}
\newcommand{\T}{\theta}
\newcommand{\de}{\delta}
\newcommand{\e}{\varepsilon}
\newcommand{\CL}{\mathcal{L}}
\newcommand{\ce}{c^{*}}
\newcommand{\sgn}{\mathrm{sign}}
\newcommand{\ave}[1]{\langle #1  \rangle}
\newcommand{\normindice}[2]{\left\Vert #1\right\Vert_{#2}}
\newcommand{\deltaind}[1]{\delta^{(#1)}}
\newcommand{\muind}[1]{\mu^{(#1)}}
\newcommand{\rhoind}[1]{\rho^{(#1)}}
\newcommand{\tauind}[1]{\cur^{(#1)}}
\newcommand{\tauphiind}[1]{\cur_{\phi}^{(#1)}}
\newcommand{\Aind}[1]{A^{(#1)}}
\newcommand{\eind}[1]{e^{(#1)}}
\newcommand{\pI}{\partial_I}
\newcommand{\pphi}{\partial_{\phi}}
\newcommand{\normnind}[2]{\vertiii{#1}_{#2}}
\newcommand\blfootnote[1]{%
	\begingroup
	\renewcommand\thefootnote{}\footnote{#1}%
	\addtocounter{footnote}{-1}%
	\endgroup
}
\newcommand\phantomarrow[2]{%
	\setbox0=\hbox{$\displaystyle #1\to$}%
	\hbox to \wd0{%
		$#2\mapstochar
		\cleaders\hbox{$\mkern-1mu\relbar\mkern-3mu$}\hfill
		\mkern-7mu\rightarrow$}%
	\,}
\newtheorem {theorem} {Theorem}
\newtheorem {proposition} [theorem]{Proposition}
\newtheorem {lemma}  [theorem]{Lemma}
\newtheorem {remark}{Remark}
\newtheorem* {lemmaiterativo} {Iterative Lemma}
\newtheorem* {maintheoremA} {Theorem A}
\newtheorem* {maintheoremB} {Theorem B}
\def\R{\mathbb R}
\def \d {\mathrm{d}}
\def \tbo {\bar{t}_0}
\def \ybo {\bar{y}_0}
\def \Ebo {\bar{E}_0}
\def \fbto {\bar{f}_{t_0}}
\def \fbEo  {\bar{f}_{E_0}}
\def \Eot {{E}_0^{*}}
\def \Iot {{I}_0^{*}}
\def \Put {\tilde{P}_1}
\def \Pdt {\tilde{P}_2}
\def \rot {\tilde{ p}}
\def \Ffm {\mathcal{F}^{+}}
\def \Dfm {\mathcal{D}^{+}}
\def \tb {\bar{\tau}}
\def \tm {\cur_{+}}
\def \am {a_+}
\def \bm {b_+}
\def \t {\tau}
\def \tmest {\tau^{+}_*}
\def \etap {\eta^{+}}
\def \tme {\tau^{-}}
\def \tmeest {\tau^{-}_*}
\def \rotzm {\tilde{p}_0^{+}}
\def \robzm {\bar{p}_0^{+}}
\def \partialto {\partial_{t_0}}
\def \partialyo {\partial_{y_0}}
\def \Sp {\Sigma^{+}}
\def \Sn {\Sigma^{-}}
\def \tmt {\tau^{+}}
\def \To  {\mathbb{T}}
\def \Lome {\mathcal{L}_{\omega}}
\def \Omu {\Omega_+^{-1}}
\def \D {\Delta}
\def \CR {\mathcal{R}}
\def \Omegau {(\Omega^{(1)})^{-1}}
\def \Omuu {(\Omega_+^{(1)})^{-1}}
\def \xx {\mathrm{x}}
\def \yy {\mathrm{y}}
\def \cur {\varphi}
\title[Boundedness of solutions of a forced discontinuous oscillator]
{On the boundedness of solutions of a forced discontinuous oscillator}
\author[T. M-Seara]
{Tere M-Seara$^{1,2,3}$}
\author[L. V. M. F. Silva]
{$ ^{*} $Luan V. M. F. Silva$^4$}
\author[J. Villanueva]
       {Jordi Villanueva$^1$}
\address{$^1$ Departament de Matemàtiques, Universitat Politècnica de Catalunya, Diagonal 647, 08028 Barcelona, Spain}
\address{$^2$ Institut de Matemàtiques de la UPC - Barcelona Tech (IMTech), Pau Gargallo 14, 08028 Barcelona, Spain.
}
\address{$^3$ Centre de Recerca Matemàtica, Edifici C, Campus Bellaterra, 08193 Bellaterra, Spain.
}
\address{$^4$ Departamento de Matem\'{a}tica, Universidade
	Estadual de Campinas, Rua S\'{e}rgio Baruque de Holanda, 651, Cidade
	Universit\'{a}ria Zeferino Vaz, 13083--859, Campinas, SP, Brazil}
\begin{document}

\noindent\subjclass[2010]{37J40,34A36,70H08}

\noindent\keywords{Non-smooth oscillators, boundedness of solutions, invariant tori, KAM theory, parametrization method}

\blfootnote{$^*$Corresponding author: Luan V. M. F. Silva, luanmattos@ime.unicamp.br} 

\maketitle

\begin{abstract} 
We study the global boundedness of the solutions of a non-smooth forced oscillator with a periodic and real analytic forcing. We show that the impact map associated with this discontinuous equation becomes a real analytic and exact symplectic map when written in suitable canonical coordinates. By an accurate study of the behaviour of the map for large amplitudes and by employing a parametrization KAM theorem, we show that the periodic solutions of the unperturbed oscillator persist as two-dimensional tori under conditions that depend on the Diophantine conditions of the frequency, but are independent on both the amplitude of the orbit and of the specific value of the frequency. This allows the construction of a sequence of nested invariant tori of increasing amplitude that confine the solutions within them, ensuring their boundedness. The same construction may be useful to address such persistence problem for a larger class of non-smooth forced oscillators.	
\end{abstract}

\section{Introduction}\label{introduction}
The study of the boundedness of solutions in Duffing-type equations has been a long-standing problem in dynamical systems, dating back to the seminal works of Littlewood in the 1960's~\cite{Littlewood1966A,Littlewood1966B}. In the referred works, Littlewood proved the existence of unbounded solutions to the equation 
\begin{equation}\label{duffing}
	\ddot{x}+g(x)=p(t),
\end{equation}
with $ p(t) $ being bounded and periodic and for certain function $ g(x) $, usually called saturation function. However, in contrast to this unbounded behavior, Littlewood~\cite{Littlewood1968} raised the question of whether all solutions of~\eqref{duffing} would indeed be bounded, prompting further investigations into different conditions on $ p(t) $ and $ g(x) $ beyond the aforementioned ones. It was Morris~\cite{Morris1976} who first made a significant contribution by providing the initial example, solely under the assumption that $ p(t) $ is a periodic continuous function and $ g(x) = 2x^{3}$. Some years later, Dieckerhoff and Zehnder~\cite{zehnder} further expanded upon the Morris result by investigating the equation
\[
	\ddot{x}+x^{2n+1}+\sum_{j=0}^{2n}x^j p_j(t)=0, \quad n\geq 1,
\]
with $p_j$ denoting periodic $\CC^{\infty} $ functions.

As noted in~\cite{Li2001}, in 1998, during a presentation at the Academia Sinica, Ortega~\cite{Ortega1998Talk} introduced the idea of investigating the global stability of~\eqref{duffing}, assuming the presence of a bounded saturation function $ g $. He specifically suggested to take $g(x) = \arctan(x)$ and aimed to identify the conditions on $p(t)$ that would ensure this desired result. However, due to the bounded saturation function causing a subtle twist at infinity, applying the standard twist map theorem becomes challenging in obtaining global conclusions. In this circumstance, Li~\cite{Li2001} was the first to establish a result based on Ortega's proposal, assuming that $p(t)$ is a $\CC^{\infty}$ periodic function with zero average. This initial result was further enhanced by Wang in~\cite{Wang2006}, where the regularity requirement on $p(t)$ was relaxed to $\CC^{5}$ while also considering a small condition on its average.

As a limit scenario within the framework proposed by Ortega~\cite{Ortega1998Talk}, our investigation focuses on examining the boundedness of solutions for a non-smooth forced oscillator described by the equation
\begin{equation}\label{eq1}
	\ddot{x} + \sgn(x) = \e \; p(t),
\end{equation}
with $\sgn$ representing the standard sign function, $\e \geq 0$ being a small real parameter, and the forcing term being a real analytic and $2\pi$-periodic function
\begin{equation}\label{eq:p}
	p(t) = a_0 + \sum_{k\geq 1}(a_k \cos(kt) + b_k\sin(kt)),
\end{equation}
so that it is an analytic function of $\mathbb{T}=\R/2\pi\Z$. 

In the unperturbed scenario $\e=0$, all solutions of equation~\eqref{eq1} are periodic (see Fig.~\ref{fig1}). This implies that, in particular, all remain bounded in the $(x,\dot x)$-plane for $\e=0$. A natural question that arises from this observation is whether the bounded nature of all solutions remains true when subjected to small perturbations.

\begin{figure}[!t]
	\begin{center}
		\begin{overpic}[scale=0.5]{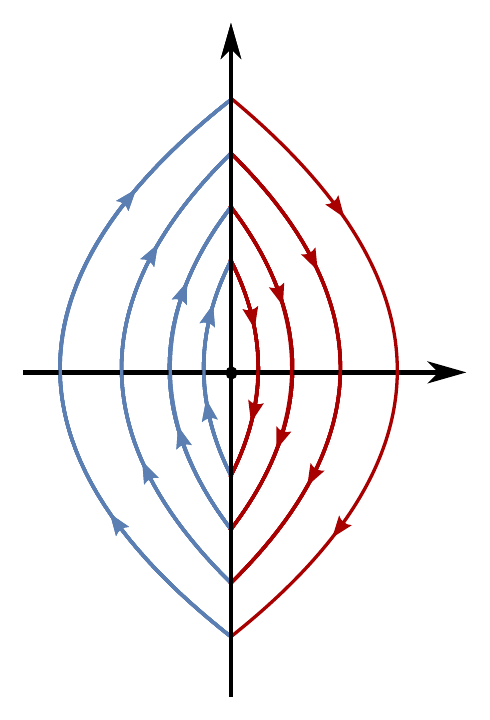}
		\put(66,47){$ x $}
			\put(31.5,99){$ y $}
		\end{overpic}		
	\end{center}
	\caption{Phase portrait of~\eqref{eq1} for the unperturbed case ($ \e=0 $).}
	\label{fig1}
	\smallskip
\end{figure}

Significant progress has been made regarding the analysis of equation~\eqref{eq1} in the non-pertur\-ba\-ti\-ve context $\e=1$. For example, in~\cite{Ortega2019}, Engui\c{c}a and Ortega proved that equation~\eqref{eq1} has infinitely many bounded solutions in the $(x,\dot{x})$-plane (of arbitrarily large amplitude) if the function $p(t)$ is, not necessarily periodic, but bounded and satisfies that the limit
\[
\bar{p}=\lim_{T\to+\infty}\frac{1}{T}\int_{t}^{t+T}p(s)ds
\]
exists uniformly in $t\in\R $, with $|\bar p|<1$. Recently, Novaes and Silva~\cite{Novaes2023} proved that all solutions remain bounded if $p(t)$ is a Lebesgue integrable periodic function with vanishing average.

Further investigations related to the perturbative equation~\eqref{eq1} can also be found in the research literature. For instance, Jacquemard and Teixeira~\cite{Jacquemard2012} studied the persistence of periodic orbits, while Burra and Zanolin~\cite{Burra2020} discovered the existence of chaos near the origin under the assumption that $p(t)$ is a piecewise constant and periodic function. The significance of equation~\eqref{eq1} extends beyond theoretical interest, finding practical applications in the field of electronics. In this context, the presence of the $\sgn$ function can be interpreted as a representation of a non-smooth oscillator operating in conjunction with a relay, as emphasized in~\cite{Jacquemard2012}. 

The main result of this work is \hyperref[TA]{Theorem A}, where we prove the boundedness of all solutions of~\eqref{eq1} by assuming that $p(t)$ is a real analytic and $2\pi$-periodic function, regardless of the value of the average of $p(t)$. The main idea behind the proof of \hyperref[TA]{Theorem A} is to determine the existence, for any given $\e>0$ sufficiently small, of an infinite family of nested two-dimensional invariant tori in the extended phase space $(t,x,\dot{x})\in\To\times\R^2$. The projection of these tori onto the $(x,\dot{x})$ plane surrounds the origin $ (0,0) $ and their amplitude becomes unbounded as we move away from it. Therefore, each of these tori perpetually confines all solutions with initial conditions enclosed within it.

One standard approach to find these tori is to perform changes of variables to the discontinuous differential equations to overcome their lack of regularity, as described in~\cite{KunzeArt, Levi1991} and references therein. This approach leads to a finite diferentiable perturbed system which allows the application of KAM theory to its stroboscopic Poincar\'{e} map. Subsequently, variants of Moser's twist map theorem~\cite{Moser1962} are employed to identify closed invariant curves for this map. By doing so, the existence of the tori of the original system can be determined. However, for equation~\eqref{eq1}, the application of such coordinate changes leads to a new system for which it is not apparent that we can establish a uniform upper bound $ \e^{\ast}>0 $ in a way that all solutions of~\eqref{eq1} would be bounded if $0<\e<\e^{\ast}$. This is primarily due to the fact that the twist condition of the system obtained after applying the different coordinate changes to~\eqref{eq1} diminishes significantly as we progressively distance ourselves from the origin. Consequently, in order to be able to directly apply this procedure to~\eqref{eq1}, we must show that, after transformation, the size of the resulting perturbation strongly decreases with this distance. Showing this strong decrease for this system does not appear to be an easy task, with the aggravating factor that this must be done in terms of a $\CC^{r}$-topology.

Our method to prove \hyperref[TA]{Theorem A} involves analyzing the so called impact map associated with the equation (see Fig.~\ref{fig2} and equation~\eqref{eq:Pe}) in appropriate canonical time-energy coordinates (see equation~\eqref{ImpactAngleEnergy}). These coordinates arise in a natural way after rewriting~\eqref{eq1} as an autonomous Hamiltonian system (see equation~\eqref{H2}). In these coordinates, the impact map is a real analytic and exact symplectic map of the annulus (see Proposition~\ref{ExactSympleticGeneral}), which turns out to be a perturbation of an integrable twist map. Although the impact map in these canonical coordinates possesses favorable properties regarding to the study of the persistence of the invariant curves of the integrable approximation $\e=0$, at first sight the same drawbacks outlined above regarding to the standard approximation persist for this analytic map. Explicitly, if we apply to it a KAM theorem for the persistence of invariant curves of analytic exact symplectic maps, it is easy to establish the persistence, up to a certain value of $\e>0$, of any given unperturbed curve having a Diophantine rotation number. However, it is not clear that we can simultaneously establish the persistence of invariant curves of arbitrarily large amplitude.
\begin{remark}
  It is worth noting that the process we use to introduce these canonical time-energy coordinates can easily be extended, at least locally, to a very wide range of non-smooth forced oscillators. When these coordinates can be introduced, we have that the analyticity, the exact symplectic character and the perturbative form of the impact map follow at zero cost simply from the structure of the initial system. This means that if we are only interested in proving a local persistence result of invariant curves, such a result can usually be reached through a very moderate amount of work. The fact that proving \hyperref[TA]{Theorem A} requires performing the elaborate process described below is because in our case we intend to prove a global persistence result.
\end{remark}
In order to prove \hyperref[TA]{Theorem A}, it has been necessary to perform a precise analysis of the behaviour of the impact map for large amplitudes. We have identified its dominant terms and we have controlled the asymptotic size of the remaining ones. As a consequence of this analysis we have been able to carry out the following construction. We consider a specific invariant curve of the unperturbed impact map with sufficiently large amplitude and with frequency that satisfies the Diophantine estimates~\eqref{diophantine}, for some pair $ (\gamma, \nu)$. This curve is characterized by a large enough (in modulus) value of the action variable of the map, that we call $E_0^\ast$. Next, we localize the impact map around $E_0^\ast$ and perform an appropriate conjugation (scaling) on this localized map, depending on $E_0^\ast$. In this way, we construct another analytic and exact symplectic map of the annulus. This map, which we can refer to as $F_{E_0^\ast}$, is different action by action and closely describes the dynamics around the selected curve. Regardless of the amplitude of the curve, for $F_{E_0^\ast}$ we have both that the twist condition is always of order one and that the size of the perturbative terms of the corresponding integrable approximation can also be bounded uniformly for all the actions (see Proposition~\ref{P1}). Then, we study the persistence of invariant curves of the impact map of arbitrarily large amplitude by applying a parametrization KAM theorem for analytic and exact symplectic twist maps of the annulus (see \hyperref[TB]{Theorem B}) to these maps $F_{E_0^\ast}$. Using this approach we do not have to worry about either the smallness of the twist condition or the control of the size of the non-integrable part as the amplitude increases. This approach of addressing the persistence of a family of tori of a specific system in terms of a parametric family of systems labelled by their frequency or frequency vector (here $E_0^\ast$ determines the frequency) has been used previously in other contexts of KAM theory (see e.g.~\cite{JorbaV97A,JorbaV97B}).

The final conclusion of this approach is that there exists $\e^{\ast}>0$ such that, for any $0<\e<\e^{\ast}$, there are infinitely many invariant curves of the unperturbed impact map that survive the perturbation. The amplitude of these curves tends to infinite and so do their frequencies. But the Diophantine constant $\gamma$ of all these frequencies can be bounded from below by the same value. Actually, $\e^{\ast}$ only depends on this uniformly lower bound of $\gamma$ and on the periodic function $p(t)$. By integrating by~\eqref{eq1} these invariant curves of the impact map and expressed in terms of $(t,x,\dot x)$, we construct the infinite collection of nested invariant tori $\{\mathcal{S}_{j}^{\e}\}_{j=0}^{\infty}$ for the equation~\eqref{eq1} in the extended phase space, thereby ensuring the boundedness of all its solutions, as long as $\e$ remains within this specified range.

For the sake of completeness, in this paper we have not only included the statement of the parametrization KAM \hyperref[TB]{Theorem B}, but we have also included its proof. Firstly, we have done this because \hyperref[TB]{Theorem B} is perfectly suited for application to the local persistence of invariant curves of analytic and close to integrable exact symplectic twist maps of the annulus. Other results in the literature on parametrization KAM theory are sometimes developed for systems with a more general structure and, therefore, need to be interpreted appropriately in order to be applied to the present context. Furthermore, we also think that its rather schematic but comprehensive proof may be useful for the interested reader who is not familiar with this kind of KAM results. 

 Finally, although we have addressed \hyperref[TA]{Theorem A} for a real analytic and periodic function $p(t)$, the same approach should be valid when $p(t)$ is only finitely differentiable. The main modifications with respect to the analytic case should appear when performing the estimates of the impact map for large amplitudes, since the size of the derivatives involved should be controlled without using Cauchy estimates. Of course, we must also consider using an appropriate version of the twist theorem for finitely differentiable exact symplectic maps of the annulus.

 The structure of this paper is outlined as follows. Section~\ref{mainresults} provides the main result of the paper, \hyperref[TA]{Theorem A}, as well as an overview of the different results (Propositions~\ref{ExactSympleticGeneral}, \ref{P}, and~\ref{P1} and \hyperref[TB]{Theorem B} whose sequential application leads to \hyperref[TA]{Theorem A}. In Section~\ref{ParameterizationKam}, we prove \hyperref[TB]{Theorem B}, which is a KAM result (based in the parametrization method) for the persistence of quasi-periodic invariant curves of an analytic and exact symplectic twist map of the annulus. Section~\ref{propP} examines the properties and bounds of the impact map associated with equation~\eqref{eq1} and proves Proposition~\ref{P}. Section~\ref{propP1} focuses on ensuring the exact symplectic properties and appropriate bounds of the localized and scaled impact map in suitable coordinates, as stated in Proposition~\ref{P1}. Finally, Section~\ref{sympletic} offers a comprehensive result concerning the exact sympletic character for the impact map written in suitable variables (see Proposition~\ref{ExactSympleticGeneral}).

\section{Main results}\label{mainresults}
The main result of this work concerns to the global stability of the solutions of~\eqref{eq1}, as follows.
\begin{maintheoremA}\label{TA}
There exists $ \e^{\ast}>0 $, depending on $ p $, such that if $ 0\leq\e<\e^{\ast} $, then all solutions of~\eqref{eq1} are bounded. 
\end{maintheoremA}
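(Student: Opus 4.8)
The plan is to prove \hyperref[TA]{Theorem A} by constructing, for every sufficiently small $\e>0$, an unbounded nested family of two-dimensional invariant tori $\{\St_j^\e\}_{j\geq 0}$ for equation~\eqref{eq1} in the extended phase space $\To\times\R^2$, so that every solution is trapped between two consecutive tori and hence remains bounded. The case $\e=0$ is immediate since all orbits are periodic (Fig.~\ref{fig1}), so the content is entirely in the perturbative regime $0<\e<\e^\ast$. First I would rewrite~\eqref{eq1} as an autonomous Hamiltonian system and introduce the canonical time--energy coordinates~\eqref{ImpactAngleEnergy}, in which (by Proposition~\ref{ExactSympleticGeneral}) the impact map $P_\e$ becomes a real analytic, exact symplectic map of the annulus that is a perturbation of an integrable twist map. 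The key point is that a single closed invariant curve of $P_\e$ encircling the origin, once reconstructed by the flow of~\eqref{eq1}, yields one torus $\St_j^\e$; so \hyperref[TA]{Theorem A} reduces to producing infinitely many such invariant curves with amplitude tending to infinity.

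The heart of the argument is the uniformity in amplitude. Here I would invoke the asymptotic analysis of $P_\e$ for large actions $|E_0|$: isolating the dominant terms, localizing $P_\e$ around a value $E_0^\ast$ with frequency satisfying the Diophantine condition~\eqref{diophantine} for a fixed pair $(\gamma,\nu)$, and performing the $E_0^\ast$-dependent scaling that produces the rescaled map $F_{E_0^\ast}$. By Proposition~\ref{P1}, $F_{E_0^\ast}$ is analytic and exact symplectic on a fixed complex strip, its twist is of order one, and the size of its non-integrable part is bounded uniformly in $E_0^\ast$ (with the bound going to zero as $\e\to 0$). This is precisely the setup required by the parametrization KAM \hyperref[TB]{Theorem B}: applying it to the family $\{F_{E_0^\ast}\}$ gives a threshold $\e^\ast>0$, depending only on $(\gamma,\nu)$ and on the uniform twist/perturbation bounds—hence ultimately only on $p$—such that for $0<\e<\e^\ast$ each $F_{E_0^\ast}$ has a persisting invariant curve with the prescribed rotation number. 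Undoing the scaling converts each of these into an invariant curve of $P_\e$; choosing a sequence $E_0^{\ast,(j)}\to\infty$ whose associated frequencies all satisfy~\eqref{diophantine} with the same $\gamma$ (Diophantine numbers of fixed constant are dense and one can pick them in any prescribed action window) produces the desired infinite family of curves of unbounded amplitude.

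Finally I would close the argument by integrating~\eqref{eq1}: each invariant curve of $P_\e$ lifts, via the time-energy coordinates and the flow between consecutive impacts on $\{x=0\}$, to a smooth two-dimensional invariant torus $\St_j^\e\subset\To\times\R^2$ whose projection to the $(x,\dot x)$-plane is a closed curve around the origin, and the amplitudes of these projections tend to infinity with $j$. Since the tori are nested and pairwise disjoint, the region between $\St_{j}^\e$ and $\St_{j+1}^\e$ is flow-invariant and bounded; any solution starts in some such region (or inside $\St_0^\e$) and is therefore bounded for all time. Taking $\e^\ast$ as produced above completes the proof.

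I expect the main obstacle to be the uniform-in-amplitude control, i.e.\ ensuring that a \emph{single} $\e^\ast$ works for \emph{all} the curves simultaneously. Naively applying an analytic KAM theorem directly to $P_\e$ gives a persistence threshold that degenerates as the amplitude grows, because the twist of $P_\e$ weakens at infinity; the whole point of passing to the rescaled family $F_{E_0^\ast}$ (Proposition~\ref{P1}) and to a parametrized KAM statement (\hyperref[TB]{Theorem B}) is to absorb this degeneration into the scaling so that the hypotheses of the KAM theorem hold with constants independent of $E_0^\ast$. Verifying that Proposition~\ref{P1} indeed delivers those uniform bounds—and that the frequencies of the rescaled curves can be kept Diophantine with a common $\gamma$ along a sequence escaping to infinity—is the delicate part on which the global conclusion hinges.
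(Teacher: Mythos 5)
Your proposal is correct and follows the same architecture as the paper's proof: exact symplectic impact map in time--energy coordinates (Propositions~\ref{ExactSympleticGeneral} and~\ref{P}), amplitude-dependent localization and scaling producing the family $F_{E_0^\ast}$ with uniform twist and perturbation bounds (Proposition~\ref{P1}), the parametrization KAM \hyperref[TB]{Theorem B} applied to this family, and reconstruction of an unbounded family of nested confining tori by integrating the surviving invariant curves. One correction to your parenthetical aside: the set of Diophantine numbers of a \emph{fixed} type $(\gamma,\nu)$ is not dense in $\R$ (an interval of radius $\gamma/q^{\nu}$ around each rational $p/q$ is excluded), so density is not the right justification for finding frequencies in every large action window. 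What does the job, and what the paper actually uses, is the translation invariance of the Diophantine condition under integer shifts of the rotation number: if $\omega_0/2\pi$ is Diophantine of type $(\gamma,\nu)$ then so is $(\omega_0+2\pi k)/2\pi$ for every $k\in\Z$, since $|(\omega_0+2\pi k)/2\pi - p/q| = |\omega_0/2\pi - (p-kq)/q|\geq \gamma/q^{\nu}$. Fixing $\omega_0 = 2\pi(\sqrt{5}-1)/2$ and taking $\omega_k=\omega_0+2\pi k$ gives the required sequence of frequencies with the same $(\gamma,\nu)$ and $\omega_k\to\infty$, which translates into actions $y_0^\ast(\e,k)\to\infty$ and hence unbounded amplitude of the persisting curves.
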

In this section we prove \hyperref[TA]{Theorem A} by the recurrent application of the results and constructions presented below. The proof of each technical result necessary to establish the theorem is postponed to a later section of the paper, as is indicated in each case.

To prove \hyperref[TA]{Theorem A}, we first check that a suitable impact map associated to~\eqref{eq1}, in good coordinates, is an analytic and exact-symplectic map which turns out to be a perturbation of an integrable twist map. Here, we briefly explain the formal construction of the impact map and we postpone the quantitative details to Section~\ref{propP}.

\begin{figure}[!t]
	\begin{center}
			\begin{overpic}[scale=0.5]{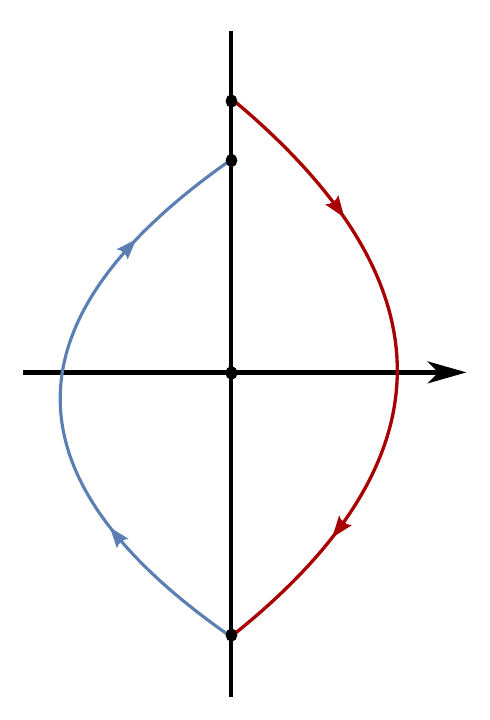}
			\put(28,97){{ $ \Sigma $}}
			\put(34.5,85.5){$(t_0,0,y_0)$}
			\put(34.5,9){$\mathcal{P}_{\e}^{+}(t_0,0,y_0)=(t_1,0,y_1)$}
			\put(-38,78.3){$\mathcal{P}_{\e}(t_0,0,y_0)=\mathcal{P}_{\e}^{-}(t_1,0,y_1)$}
			\put(55,55){{ $ \Phi_r^{\tau}$}}
			\put(2,55){{ $ \Phi_l^{\tau}$}}
			\put(66.5,46.8){$ x $}
		\end{overpic}		
	\end{center}
	\caption{Impact map $  \mathcal{P}_{\e}= \mathcal{P}_{\e}^{-}\circ\mathcal{P}_{\e}^{+}$.}
	\label{fig2}
	\smallskip
\end{figure}

If we introduce $ y=\dot{x} $, the differential equation~\eqref{eq1} can be seen as the vector field 
\begin{equation}\label{VF}
	\left\{\begin{aligned}
		\dot{t}&=1,\\ 
		\dot{x}&=y,\\
		\dot{y}&=-\sgn(x)+\e\;p(t),
	\end{aligned}\right.
\end{equation}
in the extended phase space $\mathbb{T}\times \mathbb{R}^2$, where the presence of the function $ \sgn $ configures the plane $\Sigma:=\{(t,x,y)\in\R^3\,:\,x=0\} $ as a region of discontinuity of~\eqref{VF}. The subsets of $\Sigma$
\begin{equation}\label{SpSn}
\Sp:=\{(t,0,y)\in\Sigma\,:\,y>0\} \quad \mbox{and} \quad \Sn:=\{(t,0,y)\in\Sigma\,:\,y<0\}
\end{equation}
play an important role in defining the domain of the half impact maps.  We denote the solutions of~\eqref{VF} with initial condition $ (t_0,x_0,y_0) $ and $ (t_1,x_1,y_1) $,  if $ x_0>0 $ and $ x_1 <0 $, respectively, by
\begin{equation}\label{solp}
\Phi^{\tau}_r(t_0,x_0,y_0;\e)=(t_r^{\tau}(t_0,x_0,y_0;\e),x_r^{\tau} (t_0,x_0,y_0;\e),y_r^{\tau}(t_0,x_0,y_0;\e)),
\end{equation}
and 
\begin{equation}\label{soln}
\Phi^{\tau}_l(t_1,x_1,y_1;\e)=	(t_l^{\tau}(t_1,x_1,y_1;\e),x_l^{\tau}(t_1,x_1,y_1;\e),y_l^{\tau}(t_1,x_1,y_1;\e)),
\end{equation}
respectively, where $ t_r^{\tau}(t_0,x_0,y_0;\e)=t_0+\tau $ and $ t_l^{\tau}(t_1,x_1,y_1;\e)=t_1+\tau $. For points in $ \Sigma^{+} $, we will use $ \Phi^{\tau}_r $ as the flow in $ \Sigma^{+} $ ``points to the right''. Analogously, for points in $ \Sigma^{-} $  we will use $ \Phi^{\tau}_l $ as the flow in $ \Sigma^{-} $ ``points to the left''. By adopting the Filippov convention (see~\cite{Filippov1988} for more detailed information) for the solutions of~\eqref{VF}, we find out that such solutions are obtained by the concatenation of $\Phi^{\tau}_r$ with $  \Phi^{\tau}_l $ along the crossing region $ \Sigma $. This construction ensures the uniqueness and continuity of solutions. For $ (t_0,0,y_0)\in\Sp $, we denote by $ \tmt (t_0,y_0;\e) $ the smallest positive time such that
\begin{equation}\label{eqaux1}
	x_r^{\tmt (t_0,y_0;\e) }(t_0,0,y_0;\e)=0.
\end{equation}
We note that, as~\eqref{VF} depends periodically in time, $ \tmt (t_0,y_0;\e) $ is $2\pi$-periodic in $t_0$. The half positive impact map is given by
\begin{equation}\label{pmais}
	\mathcal{P}^{+}_{\e}:	(t_0,0,y_0)\in\Sigma^+\mapsto(t_0+\tmt (t_0,y_0;\e),0,y_r^{\tmt(t_0,y_0;\e) }(t_0,0,y_0;\e))\in\Sigma^-.
\end{equation}
The initial condition $ (t_1,0,y_1)\in\Sigma^- $, follows the left flow given by~\eqref{soln}. We consider $  \tme (t_1,y_1;\e)  $ as the smallest positive time such that
\begin{equation}\label{eqaux2}
 x_l^{\tme (t_1,y_1;\e) }(t_1,0,y_1;\e)=0 ,
\end{equation}
with $  \tme (t_1,y_1;\e)  $ being $2\pi$ periodic in $t_1$. Then, the half negative impact map is given by
\begin{equation}\label{pmenos}
	\mathcal{P}^{-}_{\e}:	(t_1,0,y_1)\in\Sigma^-\mapsto(t_1+\tme (t_1,y_1;\e),0,y_l^{\tme (t_1,y_1;\e) }(t_1,0,y_1;\e))\in\Sigma^+.
\end{equation}
Thus, the complete impact map for equation~\eqref{eq1} is given by the composition of the negative with the positive half impact map, respectively, i.e.,
\begin{equation*}
	\mathcal{P}_{\e}:(t_0,0,y_0)\in\Sigma^+\mapsto (\overline{t}_0,0,\overline{y}_0)=(\mathcal{P}^{-}_{\e}\circ\mathcal{P}^{+}_{\e})(t_0,0,y_0)\in\Sigma^+,
\end{equation*}
with $2\pi$-periodic dependence in $t_0$, so that it can be read as a map of the annulus $\mathbb{T}\times\{0\}\times\R^+$ (see Fig.~\ref{fig2}).

When $ \e=0 $, it is easy to check that
\[
\mathcal{P}_0(t_0,0,y_0)=(t_0+\alpha(y_0),0,y_0),
\]
with $ \alpha(y_0)=4y_0$ is the period of the periodic solution of the unperturbed system~\eqref{eq1} with initial condition $ (t_0,0,y_0) $. Moreover, given $y_0>0$ and for sufficiently small $ \e \geq0$, the trajectories of~\eqref{eq1} starting in $ \Sp $ cross $ \Sp $ again, then $\mathcal{P}_{\e} $ is well defined and also analytic. Since $ \alpha'(y_0)=4>0 $, for all $y_0$, it follows that $ \mathcal{P}_0 $ is an integrable twist map of the annulus and any circle of the form $\mathbb{T}\times\{0\}\times\{y_0\}$, for all $y_0>0$, is an invariant curve of $ \mathcal{P}_0 $ with frequency $ \alpha(y_0)$ (see Fig.~\ref{fig3}). For those $y_0$ such that its associated rotation number $\alpha(y_0)/2\pi$ is irrational, the motion on this curve is quasi-periodic. In Lemmas~\ref{lemataup} and~\ref{lemataumenos} we provide asymptotic expressions for the times of impact $ \tmt $ and $ \tme $, respectively, for large (enough) amplitudes and whenever $ 0\leq \e <\e^{+}_*$ and $ 0\leq \e <\e^{-}_*$, respectively, with $ \e^{+}_* $ and $ \e^{-}_* $ depending only on the function $p  $. These results allow us to control how far $ \mathcal{P}_{\e} $ is to be an integrable map.

\begin{figure}[!t]
	\begin{center}
	\begin{overpic}[scale=0.35]{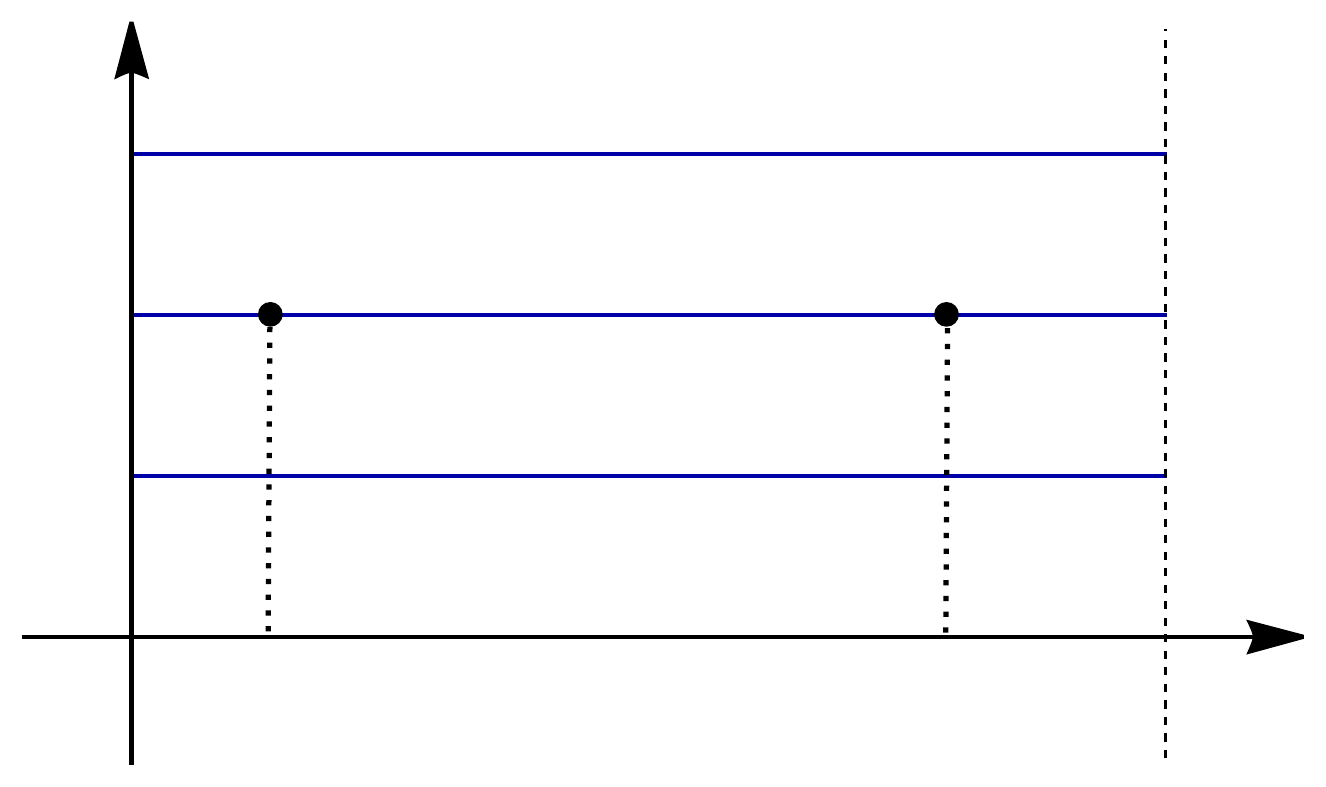}					
						\put(45,55){{ $ \Sigma^{+} $}}
						\put(13,38){$(t_0,0,y_0)$}
					\put(63,38){$\mathcal{P}_{0}(t_0,0,y_0)$}
						\put(4.2,34.5){{ $y_0$}}
						\put(19,7){$ t_0$}
						\put(7,7){$ 0$}
						\put(89,7){$ 2\pi$}
							\put(65,7){$ t_0+\alpha(y_0)$}
					\put(100,10){$ t$}
					\put(9,60){$ y $}
		\end{overpic}		
	\end{center}
		\caption{Intersections between the invariant tori in the unperturbed scenario and the subset $ \Sigma^{+} $. In the curve $ \To\times\{0\}\times\{y_0\} $, we can see the dynamics  generated by the impact map $ \mathcal{P}_0 $.}
	\label{fig3}
	\smallskip
\end{figure}

\begin{remark}
  As the map $ \mathcal{P}_{\e} $ is defined on $ \Sigma^{+} \subset \{x=0\} $, until the contrary is said, we will omit the $ x $-variable and treat $\mathcal{P}_{\e} $ as a function of $ (t_0, y_0) $, regarding it as a map on the annulus. The same convention will be applied to the maps $ \mathcal{P}_{\e} ^{+} $ and $ \mathcal{P}_{\e} ^{-} $.
\end{remark}
A fundamental concept for our approach is the exact-sympletic character for twist maps of the annulus. However, considering the cylinder $ \mathbb{T}\times \R^{+} $ endowed with the $ 2-$form $ \d t_0\wedge\d y_0 $, we cannot prove that the map $ \mathcal{P}_{\e} $ is symplectic. To overcome this obstacle, we re-write it in terms of a new couple of variables: $t_0$ and the associated symplectic conjugate, i.e., the energy $E_0$ according to the (piecewise) Hamiltonian structure of~\eqref{VF} (see~\eqref{H2}). The exact symplectic character of $ \mathcal{P}_{\e} $ in these variables follows at once from the application of Proposition~\ref{ExactSympleticGeneral} to this piecewise Hamiltonian. We postpone the proof of this proposition to Section~\ref{sympletic}.
\begin{proposition}\label{ExactSympleticGeneral}
  Let $ H(x,y,t)$ be a non-autonomous Hamiltonian, with respect to the $2$-form $\d x\wedge \d y$, and with $2\pi$-periodic dependence in $t$. By adding $E$ as a conjugate variable of $t$, we introduce the autonomous Hamiltonian ${\mathcal{H}}(x,t,y,E)=H(x,y,t)+E$, with respect to the $2$-form $\d x\wedge \d y+\d t\wedge \d E$. We denote the solutions associated with  $\mathcal{H}$ by
\[
  (x^{\tau}(x_0,t_0,y_0,E_0),t^{\tau}(x_0,t_0,y_0,E_0),y^{\tau}(x_0,t_0,y_0,E_0),E^{\tau}(x_0,t_0,y_0,E_0)),
  \]
  with $ t^{\tau}(x_0,t_0,y_0,E_0)=t_0 +\tau$, where $\tau$ represents the new time, and $(x_0,t_0,y_0,E_0)$ denotes the initial conditions at time  $\tau=0$.
  
  We consider the section $\Xi=\{x=0\}\cap\{\mathcal{H}=0\}$ and we suppose that the equation ${\mathcal{H}}(0,t,y,E)=0$ allows writing, at least locally for $(t,E)\in U\subset\R^2$, the variable $y$ as $y=y(t,E)$, being $y(t,E)$ a smooth function with $2\pi$-periodic dependence in $t$. We also suppose that it is possible to define the impact time $\tilde \tau(t_0,E_0)$ as the positive time for which we have
\begin{equation}\label{ImpTimeXi}
x^{\tilde{\tau}(t_0,E_0)}(0,t_0,y(t_0,E_0),E_0)=0,
\end{equation}
for any $(t_0,E_0)\in U$, being $\tilde \tau(t_0,E_0)$ a smooth function with $2\pi$-periodic dependence in $t_0$. Then, associated to this section $\Xi$, we introduce the map:
\[
\begin{aligned}
F:\hspace{0.3cm}U\hspace{0.35cm}&\longrightarrow\hspace{0.3cm}\R^2,\\
(t_0,E_0)&\longmapsto(t_1,E_1)=(t_0+f_{t_0}(t_0,E_0),f_{E_0}(t_0,E_0)),
\end{aligned}
\]
 where:
\[
	\begin{aligned}
		f_{t_0}(t_0,E_0)&=\tilde{\tau}(t_0,E_0),\\
		f_{E_0}(t_0,E_0)&=E^{\tilde\tau(t_0,E_0)}(0,t_0,y(t_0,E_0),E_0)=-H(0,y^{\tilde{\tau}(t_0,E_0)}(0,t_0,y(t_0,E_0),E_0),t_0+\tilde{\tau}(t_0,E_0)),
        \end{aligned}
\]
meaning that if we consider initial conditions $(0,t_0,y(t_0,E_0),E_0)\in\Xi$, then the corresponding solution integrated up to time $\tilde \tau(t_0,E_0)$, given by $(0,t_1,y_1,E_1)$, with $y_1=y^{\tilde{\tau}(t_0,E_0)}(0,t_0,y(t_0,E_0),E_0)$, also belongs to $\Xi$. Then, the map $F$ is exact symplectic with respect to the $1$-form $E_0\d t_0$, i.e., there is a function $S(t_0,E_0)$, $2\pi$-periodic in $t_0$, such that $F^\ast(E_1 \d t_1)=E_0\d t_0+\d S(t_0,E_0)$. 
\end{proposition}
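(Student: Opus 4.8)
The plan is to recognise $F$ as the holonomy (Poincar\'e return) map of the characteristic foliation of the energy level $\{\mathcal{H}=0\}$ and to keep track of a primitive of the Liouville $1$-form along the orbit segments that define it. Introduce on $\R^4$ the Liouville form $\lambda=y\,\d x+E\,\d t$, so that $\d\lambda=-\omega$ with $\omega=\d x\wedge\d y+\d t\wedge\d E$, and note that the equations of motion of $\mathcal{H}$ are precisely those given by $\iota_{X_{\mathcal{H}}}\omega=\d\mathcal{H}$ (in particular $\dot t=\partial_E\mathcal{H}=1$). Write $\Phi^{\tau}$ for the time-$\tau$ map of the flow, $\Phi^{\tau}(x_0,t_0,y_0,E_0)=(x^{\tau},t^{\tau},y^{\tau},E^{\tau})(x_0,t_0,y_0,E_0)$; since $\mathcal{H}$ is autonomous, $\Phi^{\tau}$ preserves both $\omega$ and $\mathcal{H}$, so $\{\mathcal{H}=0\}$ is $\Phi^{\tau}$-invariant (this is what makes the formula for $f_{E_0}$ consistent). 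By Cartan's formula $\mathcal{L}_{X_{\mathcal{H}}}\lambda=\d\bigl(\iota_{X_{\mathcal{H}}}\lambda-\mathcal{H}\bigr)=\d\ell$ with $\ell\defeq\iota_{X_{\mathcal{H}}}\lambda-\mathcal{H}=y\,\partial_yH-H$, and integrating along the flow yields, for every fixed $\tau$, the identity of $1$-forms $(\Phi^{\tau})^{\ast}\lambda=\lambda+\d W_{\tau}$ with $W_{\tau}=\int_0^{\tau}\ell\circ\Phi^{s}\,\d s$ (defined on the domain where the relevant orbit segments exist, which suffices since we work locally).

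Then I would separate $F$ into a ``flow at frozen time'' part and a ``move the time'' part. Let $\iota(t_0,E_0)=(0,t_0,y(t_0,E_0),E_0)$ be the given parametrization of $\Xi$; since $\iota^{\ast}\d x=0$, one has $\iota^{\ast}\lambda=E_0\,\d t_0$. Put $\widetilde\Phi(t_0,E_0,\tau)=\Phi^{\tau}(\iota(t_0,E_0))$ and $\sigma(t_0,E_0)=\bigl(t_0,E_0,\tilde\tau(t_0,E_0)\bigr)$, so that $\Psi\defeq\widetilde\Phi\circ\sigma$ takes values in $\Xi$ by the definitions of $\tilde\tau$ and of $\{\mathcal{H}=0\}$; hence $\Psi=\iota\circ F$ and $F^{\ast}(E_1\,\d t_1)=F^{\ast}\iota^{\ast}\lambda=\Psi^{\ast}\lambda$. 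Computing $\widetilde\Phi^{\ast}\lambda$ by splitting the differential of $\widetilde\Phi$ into its $(t_0,E_0)$-part (on which $(\Phi^{\tau})^{\ast}\lambda=\lambda+\d W_{\tau}$ is used) and its $\tau$-part (which contributes $\lambda(X_{\mathcal{H}})\circ\widetilde\Phi\,\d\tau$), and using $\partial_{\tau}\widehat W=\ell\circ\widetilde\Phi$ for $\widehat W(t_0,E_0,\tau)\defeq W_{\tau}(\iota(t_0,E_0))$ together with $\lambda(X_{\mathcal{H}})-\ell=\mathcal{H}$, one is led to
\[
\widetilde\Phi^{\ast}\lambda\;=\;E_0\,\d t_0\;+\;\d\widehat W\;+\;\bigl(\mathcal{H}\circ\widetilde\Phi\bigr)\,\d\tau .
\]

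Pulling this back by $\sigma$ and noting $\sigma^{\ast}\d\tau=\d\tilde\tau$ and $\widehat W\circ\sigma=S$, where
\[
S(t_0,E_0)\;=\;\int_0^{\tilde\tau(t_0,E_0)}\ell\bigl(\Phi^{s}(\iota(t_0,E_0))\bigr)\,\d s\;=\;\int_0^{\tilde\tau(t_0,E_0)}\bigl(y\,\partial_yH-H\bigr)\big|_{\mathrm{orbit}}\,\d s ,
\]
gives $\Psi^{\ast}\lambda=E_0\,\d t_0+\d S+\bigl(\mathcal{H}\circ\Psi\bigr)\,\d\tilde\tau$, and the last term is $0$ because $\Psi$ has image in $\{\mathcal{H}=0\}$. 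Hence $F^{\ast}(E_1\,\d t_1)=E_0\,\d t_0+\d S$, as claimed. I expect this cancellation to be the main obstacle to carry out cleanly: the impact time $\tilde\tau$ is not constant, and its variation is exactly what produces the term $(\mathcal{H}\circ\Psi)\,\d\tilde\tau$; the point that rescues the argument is that this term is proportional to $\mathcal{H}$ evaluated at the return point, which lies on the energy surface by construction, so it is the restriction to $\Xi$ (not merely to $\{x=0\}$) that makes it vanish. Taking $\d$ of the identity also shows $F^{\ast}(\d t_1\wedge\d E_1)=\d t_0\wedge\d E_0$, i.e. $F$ is symplectic.

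It only remains to check that $S$ is $2\pi$-periodic in $t_0$. The vector field $X_{\mathcal{H}}$ is $2\pi$-periodic in $t$ (it is assembled from $\partial_xH,\partial_yH,\partial_tH$, all $2\pi$-periodic in $t$) and $t^{\tau}=t_0+\tau$, so, together with the hypotheses that $y(t_0,E_0)$ and $\tilde\tau(t_0,E_0)$ are $2\pi$-periodic in $t_0$, the substitution $t_0\mapsto t_0+2\pi$ carries the orbit $s\mapsto\Phi^{s}(\iota(t_0,E_0))$ to the orbit with unchanged $(x,y)$-components and $t$-component shifted by $2\pi$, hence leaves both the integrand $\bigl(y\,\partial_yH-H\bigr)\big|_{\mathrm{orbit}}$ and the upper limit $\tilde\tau$ invariant. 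Therefore $S(t_0+2\pi,E_0)=S(t_0,E_0)$, which finishes the proof.
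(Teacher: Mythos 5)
Your proof is correct, and it takes a genuinely different---in fact more streamlined---route than the paper's. The paper works with the Liouville primitive $x\,\d y + t\,\d E$ (so that the off-the-shelf exactness statement for the time-$T$ map of an autonomous Hamiltonian flow gives a generating function $\mathcal{S}^T$ directly), obtains first $F^\ast(t_1\,\d E_1)=t_0\,\d E_0+\d\bar S$, and must then apply a Legendre-type switch $\d(t_1 E_1)=t_1\,\d E_1+E_1\,\d t_1$ to convert this into exactness with respect to $E_0\,\d t_0$; it finally has to split $\bar S$ into a term linear in $t_0$ plus a periodic remainder to check periodicity of the final generating function. You instead pick the other natural primitive $\lambda=y\,\d x+E\,\d t$ of the symplectic form, whose pullback to the section $\Xi$ is $E_0\,\d t_0$ at once (because $x\equiv 0$ there), derive the $\tau$-dependent exactness $(\Phi^\tau)^\ast\lambda=\lambda+\d W_\tau$ from Cartan's formula, and account for the variable impact time by pulling back through the graph map $\sigma(t_0,E_0)=(t_0,E_0,\tilde\tau(t_0,E_0))$; the only correction this produces is the term $(\mathcal{H}\circ\Psi)\,\d\tilde\tau$, which vanishes exactly because the return point lies on the energy surface. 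This is the same cancellation the paper exploits when dropping $\mathcal{H}(\varphi^{T(\cdot)}(\cdot))\,\d T(\cdot)$, but your choice of $\lambda$ makes the Legendre step unnecessary and renders the $2\pi$-periodicity of $S$ immediate (no linear-in-$t_0$ piece ever appears). Both arguments are sound; yours buys a slightly shorter path and a cleaner periodicity check, while the paper's leans on the standard formula for the generating function of a time-$T$ flow and then massages the resulting one-form.
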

We notice that the solutions of~\eqref{VF} are solutions of the non-autonomous Hamiltonian system for $ (x, y) $, with continuous piecewise analytic Hamilton function
	\begin{equation}\label{H1}
		\left\{\begin{array}{l l l}
			H^{r}_{\e} (x,y,t)=\dfrac{y^{2}}{2}+x(1-\e\; p(t))& \mbox{if} & x>0,\vspace{0.5cm}\\
			H^{l}_{\e} (x,y,t)=\dfrac{y^{2}}{2}-x(1+\e\; p(t))& \mbox{if} & x<0.
		\end{array}\right.
\end{equation}
Adding the energy $  E $ as a conjugate variable to the time $ t $, we transform $H^{\sigma}_{\e}$ into the two-degrees of freedom autonomous Hamiltonian
\begin{equation}\label{H2}
	\mathcal{H}^{\sigma}_{\e} (x,t,y,E)=H^{\sigma}_{\e} (x,y,t)+E,\quad \sigma=r,l,
\end{equation}
for which we denote as $\tau$ the new time variable. Notice that if we restrict to the zero energy level $ \mathcal{H}^{\sigma}_{\e}=0 $, then we recover all the solutions of $ H^{\sigma}_{\e}$, for $ \sigma=r,l $. If the solutions $t_{\sigma}^{\tau}$, $x_{\sigma}^{\tau}$, and $y_{\sigma}^{\tau}$ are known, then the expressions for $ E_{\sigma}^{\tau}$ are explicit from the relation $ \mathcal{H}^{\sigma}_{\e}=0 $. So, based on the construction of the half impact maps~\eqref{pmais} and~\eqref{pmenos}, it follows from Proposition~\ref{ExactSympleticGeneral} that both impact maps ${\mathcal{P}}^{+}_{\e} $ and ${\mathcal{P}}^{-}_{\e} $, in canonical coordinates $ (t_0,E_0) $, give rise to exact symplectic maps $ \bar{\mathcal{P}}^{+}_{\e} $ and $ \bar{\mathcal{P}}^{-}_{\e} $, respectively, with respect to the 1-form $E_0\d t_0$. Therefore their composition, to be denoted as $ \bar{\mathcal{P}}_{\e}=\bar{\mathcal{P}}^{-}_{\e} \circ \bar{\mathcal{P}}^{+}_{\e}$, is also exact symplectic.

From expressions~\eqref{H1} and~\eqref{H2}, it follows that both equations $\mathcal{H}^{\sigma}_{\e}=0$, for $ \sigma=r,l$, give rise, for $x=0$, to the relation $y^2/2+E=0$. So, once we have computed the map $(\bar t_0,\bar y_0)={\mathcal{P}}_{\e}(t_0,y_0)$, then the exact-symplectic map $(\bar t_0,\bar E_0)= \bar{\mathcal{P}}_{\e}(t_0,E_0)$ is obtained by replacing $y_0>0$ by $E_0=-y_0^2/2<0$ and $\bar y_0>0$ by $\bar E_0=-\bar y_0/2$.

To apply KAM theory to $ \bar{\mathcal{P}}_{\e} $ (in fact, to the localized and scaled map introduced in Proposition~\ref{P1}) we need, in particular, to handle the regularity of the map. In this paper, we regard into the analytical context as our framework for applying KAM theory. Although the differential equation~\eqref{eq1} is discontinuous, both $\bar{\mathcal{P}}_{\e}^+ $ and $ \bar{\mathcal{P}}_{\e}^- $ are analytic maps. This is because~\eqref{eq1} is piecewise analytic and both impact times $ \tmt (t_0,y_0;\e) $ and $ \tme (t_1,y_1;\e)$ are analytic functions of the initial conditions. Next point is to compute an integrable approximation for the maps $ {\mathcal{P}}_{\e}(t_0,y_0)$ and $ \bar{\mathcal{P}}_{\e}(t_0,E_0)$ and to control how far are them from this integrable approximation as $y_0\to +\infty$ and $E_0\to -\infty$, respectively. To achieve these purposes, we need to ensure that both $ \tmt (t_0,y_0;\e) $ and $ \tme (t_1,y_1;\e)$  are well defined for any $y_0\gg 0$ and $y_1\ll 0$, respectively, and to accurately analyze their behaviour for large values of $y_0$ and $y_1$. Since we are interested in dealing with analytical dependence, computations concerning $ \tmt$ and $\tme$ must be carried out not only for real values of the initial conditions, but also for values of $(t_0,y_0)$ and $(t_1,y_1)$ in appropriate complex strips around the real domains selected for each of these four variables. The width of these analyticity domains  can be selected independently of $\e$, provided that $\e>0$ is sufficiently small. We use the estimates on $ \tmt (t_0,y_0;\e) $ and $ \tme (t_1,y_1;\e)$ in complex domains to control ${\mathcal{P}}_{\e}^+$, ${\mathcal{P}}_{\e}^-$, and ${\mathcal{P}}_{\e}$. Since this is the most technical part of this work, we postpone the details to Section~\ref{propP}, and give here only the final result concerning the behaviour of the impact map ${\mathcal{P}}_{\e}$ in Proposition~\ref{P} below.

We introduce some notations to be used throughout the paper. First, we denote by $|z|$ the Euclidean norm of any complex number $z\in\C$ and we extend the same notation for the sup-norm of any complex valued vector or matrix. 
We also introduce:
\begin{equation}\label{Delta-norm}
	\Delta(\rho)=\{\theta\in\C\,:\, |{\rm Im}(\theta)|<\rho\},
	\quad
	\|f\|_{\rho}=\sup_{\theta\in\Delta(\rho)}\{|f(\theta)|\}.
\end{equation}
Here, $f=f(\theta)$ is a function (real valued, vector valued or complex valued), $2\pi$-periodic in $\theta$, that can be analytically extended to the complex strip $\Delta(\rho)$ of width $\rho>0$ and that it is bounded up to the boundary of this strip. Given positive quantities $y^\ast$, $\overline{\rho}$, and $\tilde{\rho}$, we define the following sets of $\C^2$:
\begin{eqnarray}
  \mathcal{D}^{+}(y^{*},\overline{ \rho},\tilde{ \rho}) & = & \{(t_0,y_0)\in\Delta(\overline{\rho})\times\D(\tilde{ \rho})\, :\, |y_0|>y^{*}\,\; \text{and} \,\; {\rm Re}(y_0)>0\},  \label{eq:calD+}
   \\
    \mathcal{D}^{-}(y^{*},\overline{ \rho},\tilde{ \rho}) & = & \{(t_1,y_1)\in\Delta(\overline{ \rho})\times\D(\tilde{ \rho})\, :\, |y_1|>y^{*}\,\; \text{and} \,\; {\rm Re}(y_1)<0\}.  \label{eq:calD-}
\end{eqnarray}
We will check that both ${\mathcal{P}}_{\e}^+ $ and ${\mathcal{P}}_{\e}$ are defined in sets of the form~\eqref{eq:calD+}, while $ {\mathcal{P}}_{\e}^- $ is defined in a set of the form~\eqref{eq:calD-}. 

We introduce the $2\pi$-periodic real analytic functions $\tilde P_j(t)$, for $j=-1,0,1,2$, defined as:
\begin{equation}\label{eq:p0p1p2}
\tilde P_0(t)=p(t)-a_0,\quad a_0=\ave{p}, \quad \tilde P_1'(t)=\tilde P_0(t),\quad \tilde P_2'(t)=\tilde P_1(t),\quad \tilde P_{-1}(t)=\tilde P_0'(t),
\end{equation}
where $ p(t)  $ is the real analytic $ 2\pi $-periodic function defined in~\eqref{eq:p} and $\ave{f}=\frac{1}{2\pi}\int_{0}^{2\pi}f(t) dt $ denotes the average of a $2\pi$-periodic function $f(t)$. The functions $\tilde P_1(t)$ and $\tilde P_2(t)$ are unique under the normalization $\ave{\tilde P_j}=0$, $j=1,2$. Due to the analyticity of $p(t)$, there are constants $0<\rho<1$ and $\tilde p>0$ such that
\begin{equation}\label{des:rho}
	 \|\tilde P_i\|_\rho\leq \tilde p, \quad \text{for} \quad j=-1,0 ,1,2.
\end{equation} 
 Both, $\rho$ and $\tilde p$, are set fixed throughout the paper.
\begin{proposition}\label{P}
With notations, definitions and hypotheses above, there are positive constants $\overline\rho$ and $\tilde\rho$, only depending on $\rho$, and positive numbers $0<\e^*_{\mathcal{P}}<\min\{1,\sfrac{1}{2|a_0|}\}$ and $\tilde C$, only depending on $a_0$, $\rho$, and $\tilde p$, such that the following holds. For any $0\leq\e<\e^*_{\mathcal{P}}$, the impact map $(\tbo,\ybo)={\mathcal{P}}_{\e}(t_0,y_0)$ is well defined and analytic if $(t_0,y_0)\in\mathcal{D}^{+}(4,\overline{\rho},\tilde{\rho})$ (see~\eqref{eq:calD+}), and takes the form
\begin{equation}\label{eq:Pe}
{\mathcal{P}}_{\e}:	\left\{\begin{aligned}
		\tbo&=t_0+ \alpha_{\e}(y_0)+\e f_{t_0}(t_0,y_0;\e),\\
		\ybo&=y_0+\e f_{y_0}(t_0,y_0;\e),
	\end{aligned}\right.
\end{equation}
with $ \alpha_{\e}(y_0)=\tfrac{4y_0}{1-a_0^{2}\e^{2}}$, being the functions $f_{t_0}(t_0,y_0;\e)$ and $f_{y_0}(t_0,y_0;\e)$ $2\pi$-periodic in $t_0$. Furthermore, the following estimates hold:
\begin{equation*}
		\left|\partialto^{i}\partialyo^{j}f_{t_0}(t_0,y_0;\e)\right| \leq \left\{\begin{aligned}
			&\tilde C    &\mbox{if} \quad j=0,\\
			& \frac{\tilde C}{|y_0|} & \mbox{if} \quad  j\neq 0,
		\end{aligned}\right.
	\qquad \text{and} \quad 
	   		\left|\partialto^{i}\partialyo^{j}f_{y_0}(t_0,y_0;\e)\right|\leq \tilde C,
	\end{equation*}
for any $ (t_0,y_0)\in\mathcal{D}^{+}(4,\overline{\rho},\tilde{\rho})$, $0\leq \e<\e^*_{\mathcal{P}}$, and $ 0\leq i+j\leq 2 $.
  \end{proposition}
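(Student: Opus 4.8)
The plan is to prove Proposition~\ref{P} by composing the two half impact maps $\mathcal{P}_\e^+$ and $\mathcal{P}_\e^-$, whose individual asymptotic expansions (for large $|y_0|$, resp.\ $|y_1|$) I would first establish in the form of the two lemmas mentioned in the text (Lemmas~\ref{lemataup} and~\ref{lemataumenos}), and then patch these together. Since between two consecutive impacts $x$ has a definite sign, on each half-trajectory the equation reduces to the solvable $\ddot x = \mp 1 + \e p(t)$, so $y_r^\tau$ and $x_r^\tau$ are obtained by explicit integration:
\begin{equation*}
  y_r^\tau(t_0,0,y_0;\e) = y_0 - \tau + \e\bigl(\tilde P_0(t_0+\tau)-\tilde P_0(t_0)\bigr) + \e a_0\tau,
\end{equation*}
and $x_r^\tau$ by integrating once more, using the primitives $\tilde P_1,\tilde P_2$ from~\eqref{eq:p0p1p2}. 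The impact time $\tmt(t_0,y_0;\e)$ is the smallest positive root of $x_r^{\tmt}=0$; by the implicit function theorem applied to this analytic relation one gets that $\tmt$ is analytic and, solving the (essentially quadratic in $\tau$, up to the analytic perturbation $\e(\dots)$) equation, that $\tmt = 2y_0/(1-a_0\e) + O(1)$ as $y_0\to+\infty$, with explicit control of the lower-order terms in terms of $a_0,\rho,\tilde p$ via Cauchy estimates on the $\tilde P_j$. I would carry this out symmetrically for $\tme$ on $\Sigma^-$, where the analogous leading term is $-2y_1/(1+a_0\e)$.

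Next I would compute $\mathcal{P}_\e^+(t_0,y_0) = (t_0+\tmt,\,y_r^{\tmt})$ by substituting $\tau=\tmt(t_0,y_0;\e)$ into the explicit formulas above, and likewise $\mathcal{P}_\e^-$. The key bookkeeping point is that the new action $y_1 = y_r^{\tmt}$ satisfies $y_1 = -y_0 + O(1)$, so that feeding $(t_1,y_1)$ into $\mathcal{P}_\e^-$ is legitimate: $(t_1,y_1)$ lands in a domain of the form $\mathcal{D}^{-}(y^\ast,\overline\rho,\tilde\rho)$ once $y_0$ is large enough, after a small shrinking of the analyticity strips (this is where $\overline\rho,\tilde\rho$ get chosen, depending only on $\rho$, and where the uniform-in-$\e$ smallness threshold $\e^*_{\mathcal P}$ first appears — one needs $\e$ small enough that $1-a_0\e$, $1+a_0\e$ stay bounded away from $0$, hence the constraint $\e^*_{\mathcal P}<1/(2|a_0|)$). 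Composing, the two leading affine-in-action terms telescope: $\bar y_0 = -y_1 + O(1) = y_0 + O(1)$, and a careful tracking of the $\e$-linear pieces — using that the average $a_0$ enters multiplicatively while the oscillatory parts $\tilde P_0(t_0+\cdot)$ come with bounded primitives — produces the factor $1/(1-a_0^2\e^2)$ in $\alpha_\e(y_0)$, collecting the rest into $\e f_{t_0}$ and $\e f_{y_0}$. At this stage one writes $\mathcal{P}_\e$ in the stated form~\eqref{eq:Pe} and reads off that $f_{t_0},f_{y_0}$ are $2\pi$-periodic in $t_0$ (inherited from the periodicity of $\tmt,\tme$ and of $p$).

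For the derivative estimates I would work in the complex domain throughout and use Cauchy estimates: having established that $f_{t_0},f_{y_0}$ are analytic and uniformly bounded on $\mathcal{D}^{+}(2,\overline\rho',\tilde\rho')$ for slightly larger strips, the bounds on $\partial_{t_0}^i\partial_{y_0}^j$ for $i+j\le 2$ on the smaller domain $\mathcal{D}^{+}(4,\overline\rho,\tilde\rho)$ follow at the cost of a constant. The improvement by a factor $1/|y_0|$ for $f_{t_0}$ whenever $j\neq 0$ (i.e.\ under at least one $\partial_{y_0}$) is the one point requiring attention: it reflects that the $y_0$-dependence of $f_{t_0}$ enters only through combinations like $\e(\dots)/(y_0+O(1))$ — the subleading part of $\tmt$, whose $\alpha_\e$ leading term has already been split off — so differentiating in $y_0$ does not hit the $O(y_0)$ part; I would make this precise by writing $f_{t_0}$ as $g(t_0,y_0;\e)$ with $g = O(1)$ and $\partial_{y_0}g = O(1/|y_0|)$ directly from the implicit-function expansion of $\tmt$, then Cauchy-estimate the second derivatives. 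The main obstacle, and the reason the text defers everything to Section~\ref{propP}, is precisely the careful \emph{complex-domain} bookkeeping in the composition step: one must verify that the intermediate point $\mathcal{P}_\e^+(t_0,y_0)$ stays inside the (slightly shrunk) complex domain on which $\mathcal{P}_\e^-$ is analytic, uniformly for all large $|y_0|$ and all small $\e$, and that the root $\tmt$ defining the impact time is the genuine first positive root with no spurious nearby complex roots entering the strip — handling this requires the quantitative lower bounds on $|y_0|$ (hence the amplitude threshold, here normalized so that $4$ works) together with the smallness of $\e$.
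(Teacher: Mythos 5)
Your proposal follows essentially the same route as the paper: establish Lemmas~\ref{lemataup} and~\ref{lemataumenos} giving the asymptotics of the impact times $\tau^{+}$ and $\tau^{-}$, compose $\mathcal{P}_\e^{-}\circ\mathcal{P}_\e^{+}$ with careful complex-domain bookkeeping to verify the image of $\mathcal{P}_\e^{+}$ lands in the domain of analyticity of $\mathcal{P}_\e^{-}$, and track the $1/|y_0|$ decay of the $y_0$-dependent part of $f_{t_0}$. The one place where the paper is more specific than your sketch is the quantitative mechanism behind $\tau_*^{+}=O(1/|y_0|)$ uniformly: rather than appealing abstractly to the implicit function theorem, it sets up a contraction $\mathcal{F}^{+}$ on the closed ball of radius $32\tilde p$ in the Banach space of analytic functions on $\mathcal{D}_0^{+}$ endowed with the weighted norm $\vertiii{\tau}=\sup|y_0\,\tau|$, which delivers existence, uniqueness, and the $1/|y_0|$ decay in one stroke, and then obtains the decay of derivatives by Cauchy-estimating the bounded auxiliary function $\eta^{+}=y_0\,\tau_*^{+}$ and unwinding.
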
 
The impact map ${\mathcal{P}}_{\e}(t_0,y_0)$ lacks the desired exact symplectic character with respect to the 2-form $ \d t_0\wedge \d y_0 $ that we would like to have in order to be able to apply KAM theory to it. However, as noted above, this exactness in terms of the 2-form $ \d t_0\wedge \d E_0 $ is achieved by replacing the variable $y_0>0$ by $E_0=-y_0^2/2<0$. In this way, we obtain an analytic exact symplectic map $(\tbo,\Ebo)=\bar{\mathcal{P}}_{\e}(t_0,E_0)$, which takes the form: 
\begin{equation}\label{ImpactAngleEnergy}
\bar{\mathcal{P}}_{\e}:	\left\{\begin{aligned}
		\tbo&=t_0+\bar{\alpha}_{\e}(E_0) +\e \bar{f}_{t_0}(t_0,E_0;\e),\\
		\Ebo&=E_0+\e \bar{f}_{E_0}(t_0,E_0;\e),
	\end{aligned}\right.
\end{equation}
with $ \bar{\alpha}_{\e}(E_0)=\frac{4\sqrt{-2E_0}}{1-a_0^{2}\e^{2}} $, being the functions $\fbto(t_0,E_0;\e) $ and $ \fbEo(t_0,E_0;\e)$ $2\pi$-periodic in $t_0$. The complex domain of definition for $E_0$ of $\bar{\mathcal{P}}_{\e}$, as well as the corresponding estimates for $\bar{f}_{t_0}$ and $\bar{f}_{E_0}$, follow at once from the results of Proposition~\ref{P} on ${\mathcal{P}}_{\e}$, only taking into account the relations $E_0=-y_0^2/2<0$ and $\Ebo=-\ybo^2/2$. However, we are not going to state the analogue of Proposition~\ref{P} for $\bar{\mathcal{P}}_{\e}$ since, as noted above, our parameterization KAM theorem is not directly applied to $\bar{\mathcal{P}}_{\e}$ but to a map obtained through a process of localization and scaling of $\bar{\mathcal{P}}_{\e}$ around a specific action $E_0^*\ll 0$. This map is referred to as the scaled impact map, and its main properties are detailed in Proposition~\ref{P1}.

The map $\bar{\mathcal{P}}_{\e}$ takes the form of a close to integrable twist map of the annulus $\mathbb{T}\times\R$. Thus, the natural question arising from this fact is: which of the invariant curves for the unperturbed map (when $\e=0$) persist after small perturbations? This question could be answered by means of the celebrated KAM theory, like the Moser's twist mapping theorem~\cite{Moser1962} as well as other versions of this theorem (see e.g.~\cite{Levi1991, Liu2002,Ortega1999, Ortega2001}). However, all these results provide conditions for such persistence which, in particular, depend explicitly on the region considered and on the fact that some estimates uniformly hold in the selected region. Those reasons make very difficult the task to obtain persistence of invariant curves in unbounded domains, under conditions independent of the amplitude of the curve that we want to persist. Since the twist condition $\overline{\alpha}_{\e}'(E_0)$ for the impact map $\bar{\mathcal{P}}_{\e}$ tends to zero as $E_0\to-\infty$, we do not know any KAM result that can be globally applied in our context to show the persistence of curves of arbitrarily large amplitude.

In this work we address the persistence of these invariant curves in terms of a result based on the parametrization KAM theory introduced in~\cite{LGJV2005} (see~\cite{Haro2016} for a wide overview of parame\-tri\-za\-tion techniques in dynamical systems). Specifically, in \hyperref[TB]{Theorem B} we provide a KAM theorem concerning the persistence of a specific quasi-periodic invariant curve for a close to integrable, analytic and exact sympletic twist map $F$ of the form:
   \begin{equation}\label{TM}
     (\phi,I)\in\To\times\R\mapsto	F(\phi,I)=(\phi+\alpha(I)+f_{\phi}(\phi,I),I+f_I(\phi,I))\in\To\times\R,
   \end{equation}
 endowed by the $2-$form $\d\phi\wedge\d I=-\d(I\d\phi)$, i.e., verifying $F^\ast(I\d\phi)=I\d\phi+\d V(\phi,I)$, for some function $V(\phi,I)$ depending $2\pi$-periodically in $\phi$. By a specific invariant curve we mean the curve that is a perturbation of $\To\times\{I_0^{*}\}$, for some $I_0^{*}$, but that has the same frequency $\omega=\alpha(I_0^{*})$. Explicitly, given an analytic curve $\mathcal{T}\subset\To\times\R$, we say that it is $ F $-invariant with frequency $\omega $ if there is an analytic parametrization $\cur:\To\to\To\times\R$ of the curve $\mathcal{T}$ for which the following equation holds
\begin{equation}\label{invariance}
	F(\cur(\T))=\cur(\T+\omega),
\end{equation}
for all $ \T\in\R $. We refer to~\eqref{invariance} as the invariance equation for $\cur$. Then, if $\cur$ verifies~\eqref{invariance}, the curve $\mathcal{T}=\cur(\mathbb{T})$ is invariant by the map $F$ and the pull-back by $\cur$ of the dynamics on $\mathcal{T}$ becomes the rigid rotation on $\To$ of frequency $\omega$, i.e., $T_\omega(\cdot)=\cdot+\omega$. Hence, the rotation number of $\mathcal{T}$ is $\omega/2\pi$ and the dynamics on the curve is quasi-periodic if $\omega/2\pi$ is an irrational number. Actually, to discuss the persistence of this curve, we are going to assume that $\omega/2\pi$ is a Diophantine number of type $(\gamma,\nu)$ (see~\eqref{diophantine}). Since we are interested in invariant curves which are isotopic to $\To\times\{I_0^{*}\}$, we are going to consider parametrizations of the form $\cur(\theta)=(\theta+\cur_\phi(\theta),\cur_I(\theta))$, with $\cur_\phi$ and $\cur_I$ being $2\pi$-periodic in $\theta$.

The idea of the parametrization method is to solve for $\cur$ the equation~\eqref{invariance} by means of a quasi-Newton method. This method iteratively modifies $\cur$, but not the map $F$ which remains unchanged along the process. This fact eases the discussion of which conditions on $F$ are needed in order to ensure the persistence of the target curve with respect to the classical approaches to KAM theory which are based on the application of a sequence of canonical transformations to the map $F$. To measure the distance of $\cur$ from being a solution of~\eqref{invariance}, the invariance error associated to $\cur$ is defined as
   \begin{equation}\label{error}
	e(\T)=F(\cur(\T))-\cur(\T+\omega), \quad \T\in \To.
   \end{equation}
Hence, it is usually said that $\mathcal{T}=\cur(\To)$ is a quasi-torus of $F$ if the $2\pi$-periodic function $e$ is sufficiently small, in the sense of the norm $\|e\|_\rho$ for some $\rho>0$.
\begin{maintheoremB}\label{TB}
Consider $F=F(\phi,I)$ a real analytic and exact symplectic map of the annulus $\To\times\R$ of the form~\eqref{TM}, take a particular action $\Iot \in\R$ and denote $\omega=\alpha(\Iot)$. We suppose:
\begin{itemize}
\item[\textbf{(H1)}] 
$F$ can be analytically extended to $\mathcal{U}:= \D (\rho_0)\times D(I^{*}_0,R_0) $, for some $\rho_0,R_0\in(0,1)$, with $\D(\rho_0)$ being the complex strip introduced in~\eqref{Delta-norm} and $ D(\Iot,R_0) $ being the complex disc:
\begin{equation}\label{set:D}
	D(\Iot,R_0):=\{I\in\C\,:\,|I-I^{*}_0|<R_0\}.
\end{equation}
\item[\textbf{(H2)}] 
$\omega/2\pi$ is a Diophantine number of type $ (\gamma, \nu)$, for some $0<\gamma\leq 1$ and $\nu\geq2 $, i.e.,
\begin{equation}\label{diophantine}
	\left|\frac{\omega}{2\pi}-\frac{p}{q}\right|\geq \frac{\gamma}{q^{\nu}}\;, \quad \forall p\in\mathbb{Z}, \; \forall q \in\mathbb{N}.
\end{equation}
 \item[\textbf{(H3)}] 
 There are positive constants $c_1$, $c_2$, $c_3$ and $c$, with $c\leq 1$, for which the functions $ \alpha $, $ f_{\phi} $ and $ f_I $ in~\eqref{TM} satisfy:
\begin{align*}
		c_1\leq |\alpha'(I)|\leq c_2, \quad |\alpha''(I)|\leq c_3, \quad 	|\pphi^{i} \pI^{j} f_{\phi}(\phi,I)|\leq c, \quad	|\pphi^{i} \pI^{j} f_{I}(\phi,I)|\leq c,
\end{align*}	
for every  $ (\phi,I)\in\mathcal{U} $ and $ 0\leq i+j\leq 2 $. In particular, $\alpha'(\Iot)\neq 0$ and the (unperturbed) map $(\phi,I)\to (I+\alpha(I),I)$ is a twist map if $I\approx \Iot$.
\end{itemize}
Then, there is a constant $c^\ast\geq 1$ increasingly depending on $1/c_1$, $c_2$, $c_3$ and $\nu$, for which the following holds. Assume that $c$ is small enough so that it verifies
\begin{equation}\label{condTB}
	c^\ast\,c\leq\gamma^2\delta_0^{2\nu}\min\{c_1\delta_0/8,\delta_0/12,R_0/3\},\quad c^\ast\,c\leq 2^{-4\nu-2}\gamma^4\delta_0^{4\nu+1},
\end{equation}
where $\delta_0=\rho_0/12$. Then, there exists a real analytic function $\cur^\ast:\To\to\To\times\R$, of the form $\cur^\ast(\theta)=(\theta+\cur^\ast_\phi(\theta),\cur_I^\ast(\theta))$, with $\cur_\phi^\ast(\theta),\cur_I^\ast(\theta)$ being $2\pi$-periodic in $\theta$, and with $(\cur^\ast)'(\theta)\neq 0$ for all $\theta$, which turns out to be a solution of the invariance equation~\eqref{invariance}. Consequently, $\cur^\ast$ is the parametrization of an invariant curve $\mathcal{T}=\cur^\ast(\To)$ of $F$ with frequency $\omega$. Moreover, $\cur$ is defined for $\theta \in \D (\rho_0/2)$ and satisfies:
\begin{equation}\label{conclusionTB}
	\normindice{\cur_{\phi}^{*}}{\sfrac{\rho_0}{2}}\leq \frac{2\ce c}{\gamma^{2}\delta_0^{2\nu}}\;,
	\quad
	\normindice{\cur_{I}^{*}-I_0^{*}}{\sfrac{\rho_0}{2}}\leq \frac{2\ce c}{\gamma^{2}\delta_0^{2\nu}}\;,
	\quad 
	\normindice{(\cur_{s}^{*})'}{\sfrac{\rho_0}{2}}\leq \frac{2\ce c}{\gamma^{2}\delta_0^{2\nu+1}}\;,
\end{equation}
for $s\in\{\phi,I\}$.
\end{maintheoremB}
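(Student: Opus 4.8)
The plan is to prove Theorem B by implementing the parametrization method as a quasi-Newton scheme on the invariance error $e(\theta)=F(\cur(\theta))-\cur(\theta+\omega)$, starting from the trivial approximate parametrization $\cur_0(\theta)=(\theta,\Iot)$. For this initial guess the error is $e_0(\theta)=(\alpha(\Iot)+f_\phi(\theta,\Iot)-\omega,\,f_I(\theta,\Iot))=(f_\phi(\theta,\Iot),f_I(\theta,\Iot))$, so by \textbf{(H3)} we have $\|e_0\|_{\rho_0}\leq c$. The heart of the matter is the \emph{iterative lemma}: given an approximate solution $\cur$ on a strip $\D(\rho)$ with error $\|e\|_\rho=:\eta$ small enough, one constructs a correction $\cur_+=\cur+\Delta\cur$ defined on a smaller strip $\D(\rho-\delta)$ whose new error satisfies a quadratic estimate $\|e_+\|_{\rho-\delta}\leq C\,\eta^2/(\gamma^2\delta^{2\nu})$ for a constant $C$ depending only on $c_1,c_2,c_3,\nu$ (through the twist and through the small-divisor losses). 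The geometric-sum structure of the loss $\delta_j=\delta_0/2^j$ at step $j$, together with the quadratic gain, then yields convergence provided the initial error is smaller than a threshold of the form $\gamma^2\delta_0^{2\nu}\cdot(\text{const})$ — this is exactly the role of the smallness condition~\eqref{condTB}, the two inequalities there controlling respectively that the iterates stay inside $\mathcal{U}$ (the $R_0/3$ and $c_1\delta_0/8$, $\delta_0/12$ terms) and that the quadratic iteration actually contracts (the $\gamma^4\delta_0^{4\nu+1}$ term).

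The key step is the construction of the correction at one stage. Writing $\cur_+=\cur+(\Delta\cur_\phi,\Delta\cur_I)$ and linearizing $F$ along $\cur$, the new error is $e_+=e+DF(\cur)\,\Delta\cur-\Delta\cur\circ T_\omega+O(\Delta\cur^2)$, so one wants to solve the linearized equation $DF(\cur(\theta))\Delta\cur(\theta)-\Delta\cur(\theta+\omega)=-e(\theta)$ approximately. I would exploit the exact symplectic structure $F^*(I\,\d\phi)=I\,\d\phi+\d V$: the tangent vector $\cur'(\theta)$ is an approximate solution of the homogeneous linearized equation (with error controlled by $e$), and together with the symplectic conjugate direction it gives an approximate frame in which $DF(\cur)$ is approximately upper triangular with unit diagonal and off-diagonal entry $\approx\alpha'(\Iot)$ (the twist). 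In this adapted frame the linearized equation splits into two cohomological equations of the form $\psi(\theta)-\psi(\theta+\omega)=(\text{known})$, solvable via Fourier series with the Diophantine condition~\eqref{diophantine} at the cost of a factor $\gamma^{-1}\delta^{-\nu}$ each (Rüssmann-type estimate); the twist $c_1\leq|\alpha'(\Iot)|$ is what makes the second equation solvable and is where the twist hypothesis is essential. Solvability also requires killing the average obstructions: the average of the second component of $e$ can be absorbed by an adjustment of the action level (here one uses that we do not fix $I_0^*$ rigidly but only the frequency $\omega$), and exactness of $F$ guarantees the remaining compatibility condition, namely that the relevant average vanishes automatically — this is precisely why the $1$-form hypothesis $F^*(I\,\d\phi)=I\,\d\phi+\d V$ appears in the statement.

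After the correction is built, one estimates: Cauchy estimates on $\D(\rho-\delta)$ bound $\Delta\cur$ and $(\Delta\cur)'$ by $\eta/(\gamma^2\delta^{2\nu})$ up to constants, the new error inherits a purely quadratic bound $\eta^2/(\gamma^2\delta^{2\nu})$ from the quadratic remainder of the linearization plus the errors made in the triangularization (which are themselves $O(\eta)$ times $\Delta\cur$), and one checks the image of $\cur_+$ still lies in $\mathcal{U}$ using the first inequality of~\eqref{condTB}. Iterating with $\rho_j=\rho_0-\sum_{k<j}\delta_k\geq\rho_0/2$ and $\eta_j\to 0$ superexponentially, the limit $\cur^*=\lim\cur_j$ exists and is analytic on $\D(\rho_0/2)$, solves~\eqref{invariance} exactly, and the telescoping sum of the corrections gives the bounds~\eqref{conclusionTB} with $c^*$ the accumulated constant; $(\cur^*)'(\theta)\neq0$ follows because $\cur_0'=(1,0)\neq0$ and the corrections to the derivative are small. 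The main obstacle I anticipate is bookkeeping the constants so that $c^*$ depends only on $1/c_1,c_2,c_3,\nu$ (and is monotone in them) and not on $\rho_0,R_0,\gamma$ — the latter must factor out cleanly into the smallness condition — which requires being careful that every use of Cauchy estimates spends a tracked portion of $\delta_j$ and that the average-adjustment of the action stays within the disc $D(\Iot,R_0)$, i.e. the total displacement of the action level is controlled by $\sum_j\eta_j/(\gamma^2\delta_j^{2\nu})<R_0/3$, which is again guaranteed by~\eqref{condTB}.
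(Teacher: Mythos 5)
Your proposal follows essentially the same quasi-Newton parametrization scheme as the paper's proof: start from $\cur^{(0)}(\theta)=(\theta,I_0^*)$, quasi-reduce the linearized invariance equation using the tangent/symplectic-conjugate frame $\{\cur',\Omega^{-1}J\cur'\}$, split it into two cohomological equations solved by R\"ussmann estimates at a cost of $\gamma^{-1}\delta^{-\nu}$ each, kill one average by the free constant in the normal correction (your ``adjustment of the action level'') and handle the other obstruction via exactness, then iterate with $\delta_j=\delta_0/2^j$ to get superexponential decay of the error on a limiting strip $\Delta(\rho_0/2)$. One minor imprecision worth flagging: exactness does not make the remaining average $\ave{(\cur_+')^\top Je}$ vanish, but rather shows it equals $-\ave{e_\phi' e_I}$, which is quadratic in $e$ rather than zero; that term is then folded into the quadratic remainder, so the scheme still closes — your argument goes through after this small correction in bookkeeping.
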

The proof of \hyperref[TB]{Theorem B} is postponed to Section~\ref{ParameterizationKam} which in turn is presented in a very synthetic way, since it is an adaptation to the context of previous results on parametrization KAM theory. The interested reader is referred to the references~\cite{LGJV2005,Haro2016} quoted above for more details and for the geometric motivation of the constructions used to prove it.
  \begin{remark}
In order to apply \hyperref[TB]{Theorem B}, we need $c$ (``the size of the perturbation'') to be smaller than an expression of $\mathcal{O}(\gamma^4)$, while classical KAM theory results usually only require $c$ to be smaller than an expression of $\mathcal{O}(\gamma^2)$. This discrepancy in the $\gamma$ exponent between classical and parametrization methods in KAM theory is due to the way in which the proof of the parametrization results is usually carried out. Getting a $\mathcal{O}(\gamma^2)$ condition for $c$ using KAM parametrization methods requires a more sophisticated approach than the one used here to prove \hyperref[TB]{Theorem B}. Such approach (see~\cite{Villanueva2018} for the details) is not considered here because a $\mathcal{O}(\gamma^4)$ condition for $c$ is sufficient for our purposes.
  \end{remark}
Using \hyperref[TB]{Theorem B} we are going to show that the persistence conditions of any unperturbed curve of the impact map $\bar{\mathcal{P}}_{\e}$ (of large enough amplitude) do not depend on the amplitude of this curve, but only on the Diophantine constant $\gamma$ of its frequency $\omega$. However, despite $ \bar{\mathcal{P}}_{\e} $ in~\eqref{ImpactAngleEnergy} is an analytic and quasi-integrable exact sympletic twist map of the annulus, some conditions of \hyperref[TB]{Theorem B} are not satisfied by it when $E_0\to -\infty$. We mainly stress the fact that the twist condition becomes extremely small, since $ \bar{\alpha}_{\e}'(E_0)=-\frac{4}{1-a_0^{2}\e^{2}}\frac{1}{\sqrt{-2E_0}}$. To address these shortcomings, we propose a construction in which \hyperref[TB]{Theorem B} is not applied directly to the map $\bar{\mathcal{P}}_{\e}$, but to a collection of maps that follow from the localization of $\bar{\mathcal{P}}_{\e}$ around a specific curve $ \mathbb{T}\times\{\Eot\}$ of the unperturbed approximation, followed by an appropriate scaling depending on the selected action $ \Eot $ (see Proposition~\ref{P1}). So, the the specific map to which we apply \hyperref[TB]{Theorem B} is different curve by curve, but the persistence conditions will be uniform in $\e$ for all $E_0^*\ll 0$
and, therefore, for all these maps. 

We consider fixed values of $\e>0$ and of the energy variable $\Eot<0$. We suppose that, for this couple, $\omega=\overline{\alpha}_{\e}(\Eot)$ is well defined (later on we will ask $\omega/2\pi$ to satisfy an appropriate Diophantine condition~\eqref{diophantine}). Then, we consider the change of variables (scaling) adapted to the selected value of $\Eot$:
\begin{equation}\label{eq:phiI}
\bar{\psi}:(\phi,I)\mapsto\left(t_0,E_0/\sqrt{-\Eot}\right).
\end{equation}
In particular, the new action $I$ corresponding to the selected energy variable $E=\Eot$ is $I=I^*_0=-\sqrt{-\Eot}$. The map $F(\phi,I)$ that we obtain after conjugation of $\bar{\mathcal{P}}_{\e}$ by $\bar{\psi}$ (we omit the explicit $\e$ dependence on $F$ since $\e$ is set fixed) is a real analytic exact symplectic map, that takes the form~\eqref{TM}, and that verifies the properties stated in Proposition~\ref{P1} below. For a better understanding of the statement of the Proposition~\ref{P1}, we point out that the relation between the variable $I<0$ introduced in~\eqref{eq:phiI} and the variable $y_0>0$ of Proposition~\ref{P} is 
\begin{equation}\label{eq:y0}
	y_0=y_0(I)=\sqrt{-\sqrt{2}\,y_0^*\,I},
\end{equation}
where $y_0^*=-\sqrt{2}\,I^*_0=\sqrt{-2\,\Eot}$ is the value of $y_0$ related to $E_0=\Eot$ and $I=I^*_0$.
 \begin{proposition}\label{P1}
  With the same notations and hypotheses of Proposition~\ref{P}, we consider a fixed couple $0<\e<\e^*_{\mathcal{P}}$ and $y_0^*>5$, and define $\Eot=-(y_0^*)^2/2$ and $I^*_0=-y_0^*/\sqrt{2}$. We then perform the scaling~\eqref{eq:phiI} to the map $\bar{\mathcal{P}}_{\e}$ of~\eqref{ImpactAngleEnergy} and we obtain a map $F(\phi,I)$ of the form
\begin{equation}\label{eq:FP1}
  F(\phi,I)=(\phi+\alpha(I)+f_{\phi}(\phi,I),I+f_I(\phi,I)),
  \end{equation}
with $\alpha(I)=\tfrac{4}{1-a_0^2\e^2}\sqrt{-\sqrt{2}\,y_0^*\,I}$ and  $f_{\phi}(\phi,I)$ and $f_I(\phi,I)$ being $2\pi$-periodic in $\phi$. The map $F$ is exact symplectic with respect to the 1-form $I\d\phi$ and it is real analytic if $(\phi,I)\in \Delta(\overline{\rho})\times{D}(I^*_0,\tilde\rho)$ (see~\eqref{Delta-norm} and~\eqref{set:D}), where $ \ov\rho $ and $ \tilde\rho $ are given in Proposition~\ref{P}. Moreover, there is a constant $\overline{C}$, that only depends on $a_0$, $\rho$, and $\tilde p$ (see~\eqref{eq:p0p1p2} and~\eqref{des:rho}), such that
  \[
  \frac{5\sqrt{2}}{3}<|\alpha'(I)|<\frac{10\sqrt{2}}{3}, \qquad
  |\alpha''(I)|\leq \frac{25}{24}, \qquad
  \left|\partial_\phi^{i}\partial_I^{j}f_{\phi}(\phi,I)\right| \leq \e\,\overline{C},\qquad \left|\partial_\phi^{i}\partial_I^{j}f_{I}(\phi,I)\right| \leq \e\,\overline{C},
  \]
for any $(\phi,I)\in \Delta(\overline{\rho})\times{D}(I^*_0,\tilde\rho)$, $0<\e<\e^*_{\mathcal{P}}$, and $ 0\leq i+j\leq 2 $.
\end{proposition}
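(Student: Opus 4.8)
\emph{Sketch of proof.} The plan is to obtain $F$ as the conjugation $F=\bar{\psi}^{-1}\circ\bar{\mathcal{P}}_{\e}\circ\bar{\psi}$ of the energy form~\eqref{ImpactAngleEnergy} of the impact map by the $\Eot$--dependent scaling $\bar{\psi}$ of~\eqref{eq:phiI}, and then to transfer the estimates of Proposition~\ref{P} through this change of variables. The conceptual point is that scaling by $\sqrt{-\Eot}$ is exactly the operation that renormalizes, simultaneously and uniformly in $\Eot\ll 0$, the vanishing twist $\bar{\alpha}_{\e}'(E_0)\sim(-2E_0)^{-1/2}$ to an order--one quantity, and the (growing) perturbation $\fbEo\sim\sqrt{-2E_0}\,\tilde C$ to a quantity of size $\mathcal{O}(\e)$. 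So I would first make~\eqref{ImpactAngleEnergy} explicit in terms of the data of Proposition~\ref{P}: using $E_0=-y_0^2/2$, $\Ebo=-\ybo^2/2$ and $\ybo=y_0+\e f_{y_0}(t_0,y_0;\e)$, one gets, with $y_0=\sqrt{-2E_0}$,
\[
\bar{\alpha}_{\e}(E_0)=\alpha_{\e}(y_0),\qquad
\fbto(t_0,E_0;\e)=f_{t_0}(t_0,y_0;\e),\qquad
\fbEo(t_0,E_0;\e)=-y_0\,f_{y_0}(t_0,y_0;\e)-\tfrac{\e}{2}f_{y_0}(t_0,y_0;\e)^2 .
\]
Conjugating by $\bar{\psi}$ — so that $t_0=\phi$, $E_0=I\sqrt{-\Eot}$, and, since $-\Eot=(y_0^*)^2/2$, $-2E_0=-\sqrt{2}\,y_0^*\,I$ — produces exactly the form~\eqref{eq:FP1}, with
\[
\alpha(I)=\bar{\alpha}_{\e}(I\sqrt{-\Eot})=\tfrac{4}{1-a_0^2\e^2}\sqrt{-\sqrt{2}\,y_0^*\,I},\qquad
f_{\phi}(\phi,I)=\e\,\fbto(\phi,I\sqrt{-\Eot};\e),\qquad
f_I(\phi,I)=\tfrac{\e}{\sqrt{-\Eot}}\,\fbEo(\phi,I\sqrt{-\Eot};\e),
\]
which are $2\pi$--periodic in $\phi$ because $\fbto$ and $\fbEo$ are $2\pi$--periodic in $t_0$.

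Next I would establish analyticity and exact symplecticity. For $I\in D(\Iot,\tilde\rho)$ the point $-\sqrt2\,y_0^*\,I$ ranges over the disc $D\big((y_0^*)^2,\sqrt2\,y_0^*\tilde\rho\big)$, which — using $y_0^*>5$ and that $\tilde\rho$, fixed in Proposition~\ref{P} and depending only on $\rho$, is small — avoids the origin; hence $y_0(I)=\sqrt{-\sqrt2\,y_0^*\,I}$ is single--valued and analytic there, with ${\rm Re}(y_0(I))>0$, with $|y_0(I)|$ comparable to $y_0^*$ (the ratio being close to $1$), and with $|{\rm Im}(y_0(I))|<\tilde\rho$ and $|y_0(I)|>4$. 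Thus $(\phi,y_0(I))\in\mathcal{D}^{+}(4,\ov\rho,\tilde\rho)$ whenever $(\phi,I)\in\D(\ov\rho)\times D(\Iot,\tilde\rho)$, so Proposition~\ref{P} applies throughout and $F$ is analytic on that set. For the symplectic structure, $\bar{\mathcal{P}}_{\e}$ is exact symplectic with respect to $E_0\,\d t_0$ by Proposition~\ref{ExactSympleticGeneral} (as in the discussion after~\eqref{H2}), with some generating function $S(t_0,E_0)$ that is $2\pi$--periodic in $t_0$; since $\bar{\psi}^{\,\ast}(E_0\,\d t_0)=\sqrt{-\Eot}\,(I\,\d\phi)$, conjugation gives $F^{\ast}(I\,\d\phi)=I\,\d\phi+\d V$ with $V(\phi,I)=S\big(\phi,I\sqrt{-\Eot}\big)/\sqrt{-\Eot}$, again $2\pi$--periodic in $\phi$.

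Finally the quantitative bounds. Differentiating the explicit $\alpha$ gives $\alpha'(I)=-\tfrac{2\sqrt2}{1-a_0^2\e^2}\,\tfrac{y_0^*}{y_0(I)}$ and $\alpha''(I)=-\tfrac{2}{1-a_0^2\e^2}\,\tfrac{(y_0^*)^2}{y_0(I)^3}$; since $\e<\e^*_{\mathcal{P}}<\tfrac1{2|a_0|}$ forces $1-a_0^2\e^2\in(\tfrac34,1]$ and $|y_0(I)|$ is close to $y_0^*>5$, a direct estimate yields $\tfrac{5\sqrt2}{3}<|\alpha'(I)|<\tfrac{10\sqrt2}{3}$ and $|\alpha''(I)|\le\tfrac{25}{24}$ (this is where $y_0^*>5$ and the smallness of $\tilde\rho$ are used). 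For the remainders, write $\pphi=\partialto$ and, on functions of $y_0=\sqrt{-2E_0}$, $\pI=\sqrt{-\Eot}\,\partialEo=-\tfrac{\sqrt{-\Eot}}{y_0}\partialyo$; both $\sqrt{-\Eot}/|y_0|$ and $|y_0|/\sqrt{-\Eot}$ are $\mathcal{O}(1)$ because $|y_0|\sim y_0^*=\sqrt2\,\sqrt{-\Eot}$. Inserting the bounds of Proposition~\ref{P} ($|\partialto^i\partialyo^j f_{t_0}|\le\tilde C$ for $j=0$, $\le\tilde C/|y_0|$ for $j\ne 0$, and $|\partialto^i\partialyo^j f_{y_0}|\le\tilde C$) into the formulas for $f_\phi$, $f_I$ and applying the chain rule, a routine computation — using $|y_0|>4$ and $\e<1$ — bounds $|\pphi^i\pI^j f_\phi|$ and $|\pphi^i\pI^j f_I|$ by $\e\,\ov C$ for $0\le i+j\le 2$, with $\ov C$ depending only on $\tilde C$, hence only on $a_0$, $\rho$, $\tilde p$. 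The substantive analytic work is already contained in Proposition~\ref{P}, so the only real obstacle here is conceptual — recognizing that the correct blow--up rate around $\Eot$ is $\sqrt{-\Eot}$ (the one making the twist order one) — together with the bookkeeping needed to confirm that, after the chain rule, no term grows as $\Eot\to-\infty$; the one computational point that needs care is the domain inclusion, which is precisely where the hypothesis $y_0^*>5$ enters.
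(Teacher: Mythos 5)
Your proposal is correct and follows essentially the same route as the paper's proof: express $F=\bar{\psi}^{-1}\circ\bar{\mathcal{P}}_{\e}\circ\bar{\psi}$ explicitly in terms of the data of Proposition~\ref{P}, obtain exactness by pulling back $E_0\,\d t_0$ through the scaling~\eqref{eq:phiI}, check the inclusion $\bar{\psi}(\Delta(\ov\rho)\times D(I_0^*,\tilde\rho))\subset\mathcal{D}^{+}(4,\ov\rho,\tilde\rho)$ using $y_0^*>5$, and transfer the bounds of Proposition~\ref{P} to $\alpha$, $f_\phi$, $f_I$ by the chain rule. The only point where the paper is more detailed is the quantitative verification of that inclusion, where it proves $|y_0(I)-y_0^*|\le|\Delta I|<\tilde\rho$ (via a binomial-series/Mean Value Theorem estimate), which yields precisely the facts you assert ($|{\rm Im}(y_0(I))|<\tilde\rho$, $|y_0(I)|>4$, and the $O(1)$ bounds on $\partial_I y_0$, $\partial_I^2 y_0$); your claims there are correct, and you rightly flag this as the step needing care.
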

The proof of Proposition~\ref{P1} is given in Section~\ref{propP1}.
It is worth noting that, if we vary the values of $I^\ast\to -\infty$, after the scaling we have that the size of the twist condition $\alpha'(I)$ has lower an upper bounds independent on $I^\ast$ in a disk of center $I^\ast$ an radius $\tilde\rho$ independent on $I^\ast$.
 
To conclude the proof of \hyperref[TA]{Theorem A}, we observe that the scaled impact map~\eqref{eq:FP1} satisfies the conditions of \hyperref[TB]{Theorem B}. Explicitly, we select a fixed $0 \leq \e < \e^*_{\mathcal{P}}$, where $ \e^*_{\mathcal{P}} $ is provided in Proposition~\ref{P}, and consider a value of $y=y_0^\ast>5$ for the impact map $\mathcal{P}_\e$ of the initial system~\eqref{eq1} (see Fig.~\ref{fig2}). According to the selected scaling, to such value $y=y_0^\ast$ it corresponds the action value $I=I_0^\ast=-y_0^\ast/\sqrt{2}$. Hence, for this given couple $(\e,y_0^\ast)$, the corresponding frequency of the unperturbed map $(\phi,I)\to (\phi+\alpha(I),I)$ in~\eqref{eq:FP1} is given by $\omega^\e_{y_0^\ast}=\alpha(I_0^\ast)=\frac{4}{1-a_0^2\e^2}y_0^\ast$. It is clear that, for a fixed $\e$, the expression $\omega^\e_{y_0^\ast}$ is a strictly increasing function of $y_0^\ast$ that goes to $+\infty$ as $y_0^\ast\to +\infty$. Let us assume that if $\omega=\omega^\e_{y_0^\ast}$, then $\omega/2\pi$ is a Diophantine number of type $ (\gamma, \nu)$ as defined in~\eqref{diophantine}. Then, we take $\rho_0=\bar\rho$, $R_0=\tilde\rho$, $c_1=\frac{5\sqrt{2}}{3}$, $c_2=\frac{10\sqrt{2}}{3}$, $c_3=\frac{25}{24}$ and $c=\e\,\overline{C}$, and apply \hyperref[TB]{Theorem B} to the map $F=F(\phi,I)$ in~\eqref{eq:FP1} corresponding to the selected couple $(\e,y_0^\ast)$ in terms of these parameters, at the action $I=I_0^\ast$. Consequently, all the conditions of \hyperref[TB]{Theorem B} are met and we conclude that there exists $0< \e^{*}\leq \e^*_{\mathcal{P}} $ such that $F$ possesses an analytic invariant curve of frequency $\omega=\omega^\e_{y_0^\ast}$, provided that the selected $\e$ is smaller than $\e^{*}$. Moreover, $\e^*$ does only depend on $\bar\rho$, $\tilde\rho$, $\overline{C}$, $\gamma$, and $\nu$, but not on the selected value of $y_0^\ast>5$ nor on the size of the frequency $\omega^\e_{y_0^\ast}$. This invariant curve of $F$ is close to the unperturbed circle $\To\times\{I_0^\ast\}$ and it admits an analytic parameterization $\cur^\ast=(\varphi^\e_{y_0^\ast})^\ast$, of the form $\cur^\ast(\theta)=(\theta+\cur^\ast_\phi(\theta),\cur_I^\ast(\theta))$, $\theta\in\To$, that solves the corresponding invariance equation~\eqref{invariance}. The parameterization $\cur^\ast$ also satisfies the estimates given in~\eqref{conclusionTB}, for some constant $c^\ast$ that only depends on $\nu$, where $\delta_0=\bar\rho/12$. In particular, this means that if we define $K^\ast=2\,c^\ast\,\overline{C}\gamma^{-2}\delta_0^{-2\nu-1}$, we have the bounds:
\[
  \normindice{\cur_{\phi}^{*}}{\sfrac{\bar\rho}{2}}\leq K^\ast\e,\qquad
  \normindice{\cur_{I}^{*}-I_0^\ast}{\sfrac{\bar\rho}{2}}\leq K^\ast\e, \qquad
  \normindice{(\cur_{s}^{*})'}{\sfrac{\bar\rho}{2}}\leq K^\ast\e,\quad s\in\{\phi,I\}. 
  \]
Going back to the original variables $ (t_0,x_0=0,y_0) $ of the impact map $\mathcal{P}_{\e}$, we observe that by considering the above discussed relations $t_0=\phi$ and $y_0=\sqrt{-\sqrt{2}\,y_0^*\,I}$ (see~\eqref{eq:phiI} and~\eqref{eq:y0}), we obtain the following analytical parametrization for the corresponding $\omega^\e_{y_0^\ast}$-quasi-periodic solution of $\mathcal{P}_{\e}$ in $\Sp$ (see~\eqref{SpSn}), that we denote as $\eta^+=(\eta^\e_{y_0^\ast})^+$ (in the following we will often omit the dependence on $(\e,y_0^\ast)$ since in this discussion we are dealing with a fixed pair of these values):
\[
  \eta^+(\theta)=(\theta+\eta_{t_0}^+(\theta),0,\eta_{y_0}^+(\theta))=\left(\theta+\cur^\ast_\phi(\theta),0,\sqrt{-\sqrt{2}\,y_0^*\,\cur_I^\ast(\theta)}\right),
  \quad
  \theta\in\To,
 \]
on the understanding that here we are treating $t_0$ as a variable defined modulus $2\pi$, so in fact we are treating the plane $\Sp$ as a cylinder, that we denote by $\tilde{\Sigma}^+$. This parametrization verifies $\mathcal{P}_{\e}(\eta^+(\theta))=\eta^+(\theta+\omega)$, where $\omega=\omega^\e_{y_0^\ast}$. Using that $y_0^\ast=-\sqrt{2}\,I_0^\ast$, we observe that
\begin{align*}
    \eta_{y_0}^+(\theta)-y_0^\ast &=\frac{-\sqrt{2}\,y_0^\ast\,(\cur_I^\ast(\theta)-I_0^\ast)}{\sqrt{-\sqrt{2}\,y_0^*\,\cur_I^\ast(\theta)}+y_0^\ast},
    \\
    (\eta_{y_0}^+)'(\theta) & =\frac{-\sqrt{2}\,y_0^\ast\,(\cur_I^\ast)'(\theta)}{\sqrt{-\sqrt{2}\,y_0^*\,\cur_I^\ast(\theta)}+y_0^\ast}+
 \frac{(y_0^\ast)^2\,(\cur_I^\ast(\theta)-I_0^\ast)\,(\cur_I^\ast)'(\theta)}{(\sqrt{-\sqrt{2}\,y_0^*\,\cur_I^\ast(\theta)}+y_0^\ast)^2\,\sqrt{-\sqrt{2}\,y_0^*\,\cur_I^\ast(\theta)}}.
\end{align*}
These expression allows to obtain the following estimates (we note that although $\eta^+$ is an analytic function, for stability purposes bounds for ``real'' values of $\theta$ suffice): 
\[
  |\eta_{t_0}^{+}(\theta)|\leq K^\ast\e,\quad
  |\eta_{y_0}^+(\theta)-y_0^\ast|\leq \sqrt{2}\,K^\ast\e,\quad
  |(\eta_{t_0}^{+})'(\theta)|\leq K^\ast\e,\quad
  |(\eta_{y_0}^+)'(\theta)|\leq 2\,\sqrt{2}\,K^\ast\e,
  \quad\theta\in\To.
  \]
To get the last bound of this list perhaps we have to make $\e^\ast$ slightly smaller or to slightly increase $K^\ast$, but in any case the obtained value of $\e^\ast$ or $K^\ast$ depends on the same parameters as the former one. Consequently, the closed curve $\mathcal{T}^+=(\mathcal{T}^\e_{y_0^\ast})^+=\eta^+(\T)\subset\tilde{\Sigma}^+$ is invariant by the action of the impact map $\mathcal{P}_{\e}$ and it remains close to the circle $\To\times\{y_0^\ast\}$, to which it is homotopic. For further uses, we also introduce the corresponding invariant curve in the cylinder $\tilde{\Sigma}^-$ defined by the action of $\mathcal{P}^+_\e$ on $\mathcal{T}^+$, i.e., $\mathcal{T}^-=(\mathcal{T}^\e_{y_0^\ast})^-=\mathcal{P}^+_\e(\mathcal{T}^+)\subset \tilde{\Sigma}^-$. Of course, we have $\mathcal{P}^-_\e(\mathcal{T}^-)=\mathcal{T}^+$.

By integration of $\mathcal{T}^+$ by the flow of~\eqref{VF}, this curve gives rise to an invariant two-dimensional torus $\mathcal{S}=\mathcal{S}^{\e}_{y_0^\ast}$ of~\eqref{VF}, on the understaning that we are still treating $t_0$ as a variable defined moduls $2\pi$. The torus $\mathcal{S}$ is continuous, picewise analytic and $\e$-close to the Cartesian product of the coresponding $y_0^\ast$-invariant curve of the unperturbed system $\e=0$ (as displayed in Fig.~\ref{fig1}) by the variable $t\in\To$, being homotopic to $(t,x,\dot x)\in\To\times\{0\}\times\{y_0\}$. Indeed, $\mathcal{S}$  is not smooth in both the upper curve $\mathcal{T}^+$ and the lower curve $\mathcal{T}^-$, since integration by~\eqref{VF} means to follow the flow $\Phi^{\tau}_r$ of~\eqref{solp} from $\mathcal{T}^+$ to $\mathcal{T}^-$ and the flow $\Phi^{\tau}_l$ of~\eqref{soln} from $\mathcal{T}^-$ to $\mathcal{T}^+$. Parametrizations of the right and left components of $\mathcal{S}$ are given by $\sigma^+=(\sigma^\e_{y_0^\ast})^+$ and $\sigma^-=(\sigma^\e_{y_0^\ast})^-$, respectively, which can be written as:
\[
\sigma^+(\theta,\tau)=\Phi^{\tau}_r(\eta^+(\theta)),\qquad \theta\in\To,\quad 0\leq\tau\leq\tau^+(\tilde{\eta}^+(\theta);\e),
\]
and
\[
\sigma^-(\theta,\tau)=\Phi^{\tau}_l(\eta^-(\theta)),\qquad \theta\in\To,\quad 0\leq\tau\leq\tau^-(\tilde{\eta}^-(\theta);\e),
\]
where $\tau^+$ and $\tau^-$ are the impact times introduced in~\eqref{eqaux1} and~\eqref{eqaux2}, and we are denoting $\eta^-=\mathcal{P}^+_\e(\eta^+)$ and $\tilde{\eta}^+$ and $\tilde{\eta}^-$ as the functions defined by the first and third components $(t,y)$  of the parameterizations $\eta^+$ and $\eta^-$, respectively (without the zero of the second component).

Since~\eqref{VF} is $2\pi$ periodic in $t$, to make the variable $t_0$ a true dynamical variable of the system we should lift it from $\To$ to $\R$. Then, the torus $\mathcal{S}=\mathcal{S}^{\e}_{y_0^\ast}$ becomes an infinite cylinder of $(t,x,\dot x)\in\R^3$, which is perpetually $\e$-close to the Cartesian product of one of the invariant curves of Fig.~\ref{fig1} by $t\in\R$. This means that it perpetually confines the trajectories whose initial conditions belongs inside this cylinder, so that they remain bounded for al time. Furthermore, the amplitude in the $(x,\dot x)$-plane of these cylinders goes to infinite as $y_0^\ast\to+\infty$.

The final part of the proof of \hyperref[TA]{Theorem A} is a mere observation. Let us consider $\omega_0\in\R$ any given value such that $\omega_0/2\pi$ is a Diophantine number according to definition~\eqref{diophantine}. E.g., we can select $\omega_0$ as $2\pi$ times the Golden Mean $(\sqrt{5}-1)/2$. Associated to $\omega_0$, we have a particular couple $ (\gamma, \nu)$ in~\eqref{diophantine}, that we set fixed up to the end of this section. We then introduce $\omega_k=\omega + 2\pi k$, where $k \in \mathbb{N}$. On the one side, $\omega_k\to +\infty$ as $k\to +\infty$. Consequently, since we have that $\omega^\e_{y_0^\ast}=\frac{4}{1-a_0^2\e^2}y_0^\ast$, then, for any given $0<\e<1/|a_0|$, we can define an infinite sequence of values of $y_0^\ast=y_0^\ast(\e,k)>5$ for which we have $\omega^\e_{y_0^\ast(\e,k)}=\omega_k$, for any $k\geq k_0$. The value of $k_0$ is independent of $\e$ and $y_0^\ast(\e,k)\to +\infty$ as $k\to +\infty$. On the other side, $\omega_k/2\pi$ is also a Diophantine number of type $ (\gamma, \nu)$, for any $k\in\mathbb{N}$. We then select as specific value of $\e^\ast$ in the statement of \hyperref[TA]{Theorem A} the one provided by construction above when applied to any given $y_0^\ast>5$ for which $\omega=\omega^\e_{y_0^\ast}$ is such that $\omega/2\pi$ is a Diophantine number of type $(\gamma, \nu)$. Since $\e^\ast$ depends on  $(\gamma, \nu)$, but not on the size of the frequency $\omega^\e_{y_0^\ast}$, we conclude that for each $0<\e<\e^\ast$ we can construct a sequence of two-dimensional tori $\{\mathcal{S}^{\e}_{y_0^\ast(\e,k)}\}_{k\geq k_0}$ whose projection onto the plane $(x,\dot x)$ confine the trajectories surrounded by each of these tori.  Since the amplitude of these tori go to infiniy with $k$, we get the desired perpetual stability for this specific value of $\e$.

\section{A parameterization result for invariant curves of a twist map of the annulus: Proof of Theorem~\ref{TB}}\label{ParameterizationKam}
Firstly, we introduce some notations and basic results to be used throughout the proof of \hyperref[TB]{Theorem B}. Let $g:\R\to\R$ be a real analytic and $2\pi$-periodic function that we assume that can be analytically extended to $\Delta(\rho)$ (see~\eqref{Delta-norm}), for some $\rho>0$, and denote by $\tilde{g}=g-\ave{g}$. Then, for any $0<\delta\leq\rho$ we have
\[
|\ave{g}|\leq\|g\|_\rho\;,\quad \|\tilde{g}\|_\rho\leq 2\|g\|_\rho\;,\quad \|g'\|_{\rho-\delta}\leq\frac{\|g\|_\rho}{\delta}.
\]
where the last inequality is a consequence of the application of the classical Cauchy estimates to $g$. Since $ \omega\in\R $ is set to be a fixed number, we introduce the notation $f_{+}(\cdot)=f(\cdot+\omega) $ and we define the operator $ \Lome(f)=f_+-f $. Let us denote as $g(\theta)=\sum_{k\in\Z}\hat{g}_k{\rm e}^{{\rm i} k\theta}$ the Fourier expansion of the $2\pi$-periodic and analytic function $g$ above. If $\omega/2\pi$ is an irrational number and $\ave{g}=\hat{g}_0=0$, then there is a unique formal $2\pi$-periodic solution $f=\Lome^{-1}(g)$ of the linear equation $\Lome(f)=g$, provided that the normalization condition $\ave{f}=0$ is assumed on $f$. This solution is explicitly given by the expansion:
\[
\Lome^{-1}(g)=\sum_{k\in\Z\setminus\{0\}}\frac{\hat{g}_k}{{\rm i} k\omega}{\rm e}^{{\rm i} k\theta}. 
\]
In fact, all the formal solutions of $\Lome(f)=g$ are of the form $f=c+\Lome^{-1}(g)$, for any constant $c$. If $ \omega/2\pi $ is a Diophantine number of type $ (\gamma,\nu) $ (see~\eqref{diophantine}), then so-called R\"ussmann's estimates (see e.g.~\cite{Haro2016}) show that $\Lome^{-1}(g)$ is also an analytic function and that there is a constant $C_R=C_R(\nu)$, for which $\|\Lome^{-1}(g)\|_{\rho-\delta}\leq C_R({\gamma\delta^{\nu}})^{-1}{\normindice{g}{\rho}}$. Finally, we recall that being $F$ an exact symplectic map of $\To\times\R $, then it is also a symplectic map and consequently the following equalities are satisfied
\begin{equation}\label{symplectic}
	F^{*}(\mbox{d}\phi \wedge \mbox{d}I)=\mbox{d}\phi \wedge \mbox{d}I \quad \mbox{and} \quad (DF(p))^{\top}J(DF(p))=J,
\end{equation}
where $ DF(p)$ denotes the differential matrix of $F$ at the point $ p$ and $J=\left(\begin{smallmatrix}0 & -1\\1 & 0\end{smallmatrix}\right)$ is the matrix representation of the two form $\mbox{d}\phi \wedge \mbox{d}I$.

  The proof of \hyperref[TB]{Theorem B} is performed by means of the iterative application of the lemma below. It is worthy mentioning that $ \cur $ and the related objects depend on $ \T $, but this dependence is going to be omitted in most of the expressions.
  \begin{lemmaiterativo}\label{iterativelemma}
	With the same hypotheses and notations on the statement of \hyperref[TB]{Theorem B}, let us consider a parametrization of a curve $\cur:\To\to\To\times\R$ of the form $\cur(\theta)=(\theta+\cur_\phi(\theta),\cur_I(\theta))$, for which the following holds. We suppose that $\cur_\phi$ and $\cur_I$ are $2\pi$-periodic functions that can be analytically extended to $\Delta(\rho)$ (see~\eqref{Delta-norm}), for some $0<\rho\leq \rho_0<1$, and verify:
\begin{equation}\label{h1}
		\normindice{\cur_{\phi}}{\rho}\leq \rho_0-\rho, \quad \normindice{\cur_{I}-I^{*}_0}{\rho}\leq R_0, \quad \normindice{\cur_{\phi}'}{\rho}\leq \sfrac{1}{4}\quad \mbox{and} \quad \normindice{\cur_{I}'}{\rho}\leq \sfrac{1}{4}.
\end{equation}
We denote by $e$ the invariance error~\eqref{error} associated to $\cur$ and $F$ and we define the scalar functions
\begin{equation}\label{OmegaA}
		\Omega=(\cur')^{\top}\cur',\qquad A=\Omega^{-1}\Omu (\tm')^{\top}DF(\cur)J\cur'.
\end{equation}
We assume that $|\ave{A}|\geq c_1/2$ and that there is a constant $0<\mu\leq 1$ for which $\|e\|_\rho\leq \mu$ and $\|e'\|_\rho\leq \mu$. Then, there is a constant $c^\ast\geq 1$ increasingly depending on $1/c_1$, $c_2$, $c_3$ and $\nu$, for which the following holds. We take $ 0<\delta<\sfrac{\rho}{3} $ for which
\begin{equation}\label{h3}
		\normindice{\cur_{I}-I_0^{*}}{\rho}\leq R_0-\frac{\ce\mu}{\gamma^{2}\de^{2\nu}},
		\qquad
		\normindice{\cur_{j}'}{\rho}\leq \frac{1}{4}-\frac{\ce\mu}{\gamma^{2}\de^{2\nu+1}},
\end{equation}
for $j\in\{\phi,I\}$. Then, we can define a new parametrization $\cur^{(1)}=\cur+\Delta\cur$, where $\Delta\cur:\To\to\To\times\R$ is $2\pi$ periodic and analytic in $\Delta(\rho^{(1)})$, with $\rho^{(1)}=\rho-3\delta$, which verifies:
\[
		\normindice{\Delta\cur}{\rho^{(1)}}\leq \frac{\ce\mu}{\gamma^{2}\de^{2\nu}}, \quad  \mbox{and} \quad  \normindice{(\Delta\cur)'}{\rho^{(1)}}\leq \frac{\ce\mu}{\gamma^{2}\de^{2\nu+1}}.
\]
In addition, if we define the invariance error $e^{(1)}$ and the function $A^{(1)}$ analogously as we have defined $e$ and $A$ for $\cur$, we also have:
\begin{equation*} 
		\normindice{A^{(1)}-A}{\rho^{(1)}}\leq \frac{\ce\mu}{\gamma^{2}\de^{2\nu+1}},  \quad  \normindice{e^{(1)}}{\rho^{(1)}}\leq  \mu^{(1)},
		\quad \mbox{and} \quad \normindice{(e^{(1))'}}{\rho^{(1)}}\leq  \mu^{(1)},
\end{equation*}
where $\mu^{(1)}= \frac{\ce\mu^{2}}{\gamma^{4}\de^{4\nu+1}} $.
\end{lemmaiterativo}
 Last formula means that the size of the new invariance error $e^{(1)}$ is (almost) quadratic with respect to the size of $e$. Hence, we may expect (under suitable assumptions) that the size of the error goes iteratively to zero very fast with the step.
 \begin{proof}
 Using Newton's method as a benchmark tool, a natural way to define the correction $\Delta\cur$ for the initial parametrization $\cur$ is to try to obtain a new error $e^{(1)}$ of quadratic size with respect to $e$. Explicitly:
\[
		e^{(1)}=F(\cur^{(1)})-\cur^{(1)}_+=E^{(1)}+\mathcal{R}(\cur,\D\cur),
\]
where 
\[
	E^{(1)}=e+DF(\cur)\Delta\cur-\Delta\cur_+,\quad\mathcal{R}(\cur,\D\cur)=F(\cur+\Delta\cur)-F(\cur)-DF(\cur)\Delta\cur.
\]
Hence, $ E^{(1)} $ stands for the linear approximation of $e^{(1)}$ around $\cur$ and $\mathcal{R}(\cur,\D\cur)$ is the corresponding quadratic reminder. In the aim of Newton's method, our ideal target is to try to set $E^{(1)}=0$. The functional structure of the problem makes this objective unrealistic, so we consider the less ambitious goal of obtaing an expression for $E^{(1)}$ of quadratic size in terms of the size of $e$. This is enough for our purposes since it allows to avoid the nocive effect that the small divisors associated to the problem have on the convergence of the iterative application of this lemma. A construction like the one used to define $\Delta\cur$ is usually called a quasi-Newton method. Since hypotheses on the statement of the lemma ensure that $\{\cur',J\cur'\}$ define a basis of $\R^2$ at any point of $\T$, we look for a correction $\D\cur$ of the form:
	\begin{equation}\label{delta}
		\D\cur=a\;\cur'+b\; \Omega^{-1} J \cur',
	\end{equation}
	where $a$ and $b$ are small scalar and $2\pi$-periodic functions which we select as the solutions of following (small-divisors) difference equations
	\begin{equation}\label{auxab}
		\mathcal{L}_{\omega}(b)=\ave{(\tm')^{\top}Je}-(\tm')^{\top}Je\quad \mbox{and} \quad		\mathcal{L}_{\omega}(a)=Ab+\Omu(\tm')^{\top}e,
	\end{equation}
	with the normalization condition $\ave{a}=0$, since selecting a difference value for $\ave{a}=0$ only means a ``change of origin'' on the curve. To avoid increasing the length of the proof, we omit details on the analytic and geometric motivations of selecting such specific expressions for $a$ and $b$, that are discussed in the references on parametric KAM Theory quoted in the paper. The right hand side of the equation for $b$ has zero average, so $b$ takes the form $b=\ave{b}+\tilde{b}$, where 
	\[
	\tilde{b}=\mathcal{L}_{\omega}^{-1}(\ave{(\tm')^{\top}Je}-(\tm')^{\top}Je),
	\]
	and $\ave{b}$ is selected in such a way the right hand side of the equation for $a$ has zero average. Using that by hypothesis $\ave{A}\neq 0$, this means that
	\[
	\ave{b}=-\ave{A}^{-1}\left(\ave{A\,\tilde{b}}+\ave{\Omu(\tm')^{\top}e}\right).
	\]
	Finally, we should select
	\[
	a=\mathcal{L}_{\omega}^{-1}(Ab+\Omu(\tm')^{\top}e).
	\]
	In order to establish that taking $\Delta\cur$ defined in such a way means a quadratic size for $E^{(1)}$ in terms of $e$, key computations are to show that the vectors $DF(\cur)\cur'$ and $DF(\cur)\Omega^{-1}J\cur' $ can be written in the following way
	\begin{equation}\label{C1}
		DF(\cur)\cur'=\cur_+' +e',\quad
		DF(\cur)\Omega^{-1}J\cur'=A\;\tm'+(1+B)\;\Omu J\tm',
	\end{equation}
where $A$ is defined in the statement and $B$ is the scalar function $B=-\Omega^{-1}(DF(\cur)^{-1}e')^{\top}\cur'$. Both expressions are straightforward. The first one follows by taking the derivative on the definition of the invariance error $e$ in~\eqref{error} and the second one follows by multiplying it by $(\cur')^\top$ and $(\cur')^\top\,J$, and using that $DF$ is a symplectic matrix (see~\eqref{symplectic}) at any point (we also recall that $J^\top=J^{-1}=-J$ and that $u^\top Ju=0$ for any vector $u$). The fact that $e$ is an analytic periodic function means that the size of $e'$ (and so the size of $B$) is ``comparable'' to the size of $e$ modulus the application of appropriate Cauchy estimates. In the literature, expressions in~\eqref{C1} are usually discussed in terms of the concept of quasi-reducibility of the the linearized system around a quasi-torus of a symplectic map. We refer the interested reader to the references quoted in the introduction for more details. Using expression in~\eqref{C1} and equation~\eqref{delta} it follows that:
	\[
	E^{(1)}=a\;e'+ B\;b\; \Omu J \tm'+\zeta^{(1)},
	\]
	where
	\[
	\zeta^{(1)}=e+(a-\am+Ab)\tm'+(b-\bm)\Omu J \tm'=
	e+(Ab-\mathcal{L}_{\omega}(a))\tm'-\mathcal{L}_{\omega}(b)\Omu J \tm'.
	\]
	By using difference equations~\eqref{auxab} for $a$ and $b$ and the fact that in the basis $ \{\tm',\Omu J\tm'\}$ the invariance error $ e $ is written as
	\[
	e=(\Omu(\tm')^{\top}e)\tm'-((\tm)^{\top}Je)\Omu J\tm',
	\]
	we get the expression $ \zeta^{(1)}=-\ave{(\tm')^{\top}Je}\Omu J \tm' $. Since we easily show that the size of $e'$, $B$, $a$, and $b$  are all comparable to the size of $e$, the quadratic behaviour in $e$ of $E^{(1)}$ is equivalent to the quadratic behaviour in $e$ of $ \zeta^{(1)}$ above. The fact that $ \zeta^{(1)}$ has quadratic size is not obvious, but it is a consequence of the exact symplectic properties of the map $F$. This is the only point of the proof in which the exactness of $F$ is considered. We remind that being $ F $ exact sympletic with respect to the $ 1$-form $ I\d \phi $ means that
	\[
	F^{*}( I\mbox{d}\phi)=I\mbox{d}\phi+\mbox{d}(V(\phi,I)),
	\]
	for some function $ V:\mathbb{T}\times \R \to \R $, $2\pi$-periodic in $ \phi $. By performing the pull-back by $\cur:\To\to\To\times\R$, we obtain:
	\[
	(F\circ\cur)^{*}( I\mbox{d}\phi))=\cur^{*}(I\mbox{d}\phi)+	\mbox{d}(V\circ\cur).
	\]
	By writting $\cur=\cur(\T)$ and $F=(F_\phi,F_I)$, and then considering the coordinate representation of this one-form in terms of the basis $\mbox{d}\theta$, we obtain the equality:
	\[
	(F_{I}(\cur))(F_{\phi}(\cur))'=(1+\cur_{\phi}')\cur_{I}+(V(\cur))'.
        \]
        Then, writting $e=(e_\phi,e_I)$ and using that $F_{\phi}(\cur)=\theta+\omega+\cur_{\phi,+}+e_{\phi}$, $F_{I}(\cur)=\cur_{I,+}+e_{I}$ and that $ (\tm')^{\top}Je=-e_{\phi}\cur_{I,+}'+e_{I}(1+\cur_{\phi,+}) $, it follows:
	\[
	(\tm')^{\top}Je= -\CL_{\omega}((1+\cur_{\phi}')\cur_{I})+(V(\cur))'-e_{\phi}'e_{I}-(e_{\phi}\cur_{I,+})'.
	\]
	Since $(1+\cur_{\phi}')\cur_{I}$, $e_{\phi}'e_{I}$, and $e_{\phi}\cur_{I,+}$ are $2\pi$-periodic in $\theta$, we conclude that $\ave{(\tm')^{\top}Je}=-\ave{e_{\phi}'e_{I}}$. Consequently:
	\[
	E^{(1)}=a\;e'+ B\;b\; \Omu J \tm'+\ave{e_{\phi}'e_{I}}\Omu J \tm'.
	\]
	This last computation ends the formal part of the proof of the lemma. Now, we have to perform the rigorous bounds of all the objects involved in above computations. In particular, bounds below show that the previous formal computations are well-defined if the hypotheses of the lemma hold. Our main purpose is to construct a constant $c^\ast$ for which the result holds. To achieve this aim, during the next sequence of bounds the value of $c^\ast$ is redefined recursively to meet a finite number of conditions. At the end of this process, the last value of $c^\ast$ obtained is that of the statement of the lemma.

        	From bounds on $\cur_\phi'$ and $\cur_I'$ in~\eqref{h1}, we notice that $\Omega=(1+\cur_\phi')^2+(\cur'_I)^2$ verifies $\|\Omega-1\|_\rho\leq 5/8$. Then, $3/8\leq |\Omega(\theta)|\leq 13/8$, for any $\theta\in\Delta(\rho)$ and so $\normindice{\Omega}{\rho}\leq 2$ and $\normindice{\Omega^{-1}}{\rho}\leq 3 $. Since hypotheses in~\eqref{h1} ensure that $DF(\cur(\theta))$ is well-defined for all $\theta\in\Delta(\rho)$, we conclude that $A$ is well defined in~\eqref{OmegaA} and verifies an estimate of the form $\|A\|_\rho\leq c^\ast$ for some $c^\ast$ that, up to now, only depends on $c_2$ (since we are assuming $c<1$).

                We then use R\"ussmann's estimates to control the size of the solutions for $a$ and $b$ of equations in~\eqref{auxab}. Firstly we have:
	\begin{equation}\label{tildeb}
		\normindice{\tilde{b}}{\rho-\delta}\leq c_R\frac{\normindice{\ave{(\tm')^{\top}Je}-(\tm')^{\top}Je}{\rho}}{\gamma\delta^{\nu}}\leq \frac{c^\ast\mu}{\gamma\delta^{\nu}},
	\end{equation}
	where $c_R=c_R(\nu)$ is the constant provided by R\"usmann's result, $\delta$ is the one in the statement of the lemma and $c^\ast$ is re-defined to meet both $\|A\|_\rho\leq c^\ast$ and~\eqref{tildeb}. By proceeding in analogous way as we have done to get these bounds, we obtain (by recursively re-defining $c^\ast$):
	\begin{align*}
		|\ave{b}| &\leq \frac{c^\ast\mu}{\gamma\delta^{\nu}},
		\quad
		\normindice{b}{\rho-\de}\leq  \frac{c^\ast\mu}{\gamma\delta^{\nu}},
		\quad
		\normindice{a}{\rho-2\de}\leq \frac{c^\ast\mu}{\gamma^{2}\delta^{2\nu}},
		\quad
		\normindice{\Delta\cur}{\rho-2\de}\leq \frac{c^\ast\mu}{\gamma^{2}\delta^{2\nu}},
		\\
		\normindice{(\Delta\cur)'}{\rho-3\de} & \leq \frac{c^\ast\mu}{\gamma^{2}\delta^{2\nu+1}},
		\quad
		\normindice{B}{\rho}\leq c^\ast\mu,
		\quad
		\normindice{E^{(1)}}{\rho-2\delta}\leq \frac{c^\ast\mu^2}{\gamma^{2}\delta^{2\nu}},
	\end{align*}
	where we are using standard Cauchy estimates to bound each $\theta$-derivatives and, to bound $B$, that the symplectic character of $F$ implies that $(DF)^{-1}=-J\,(DF)^\top\,J$.

	To control the remaining aspects of the proof, we need to ensure that the composition of $F$ and its derivatives with the new parametrization $\cur^{(1)}=\cur+\Delta\cur$ are well defined. By writting $\cur^{(1)}=(\theta+\cur_\phi^{(1)},\cur_I^{(1)})$ and $\Delta\cur=(\Delta\cur_\phi,\Delta\cur_I)$, and using hypotheses in~\eqref{h1} and~\eqref{h3}, we conclude that for any $\theta\in\Delta(\rho-2\delta)$ we have:
	\begin{align*}
		|\mbox{Im}(\cur_{\phi}^{(1)})| &\leq |\mbox{Im}(\T)|+\normindice{\cur_{\phi}}{\rho}+\normindice{\Delta\cur_{\phi}}{\rho-2\delta}\leq \rho_0-\delta< \rho_0,\\
		|\cur_I^{(1)}-I^{*}_0| &\leq \normindice{\cur_{I}-I^{*}_0}{\rho}+\normindice{\Delta\cur_{I}}{\rho-2\delta}\leq R_0.
	\end{align*}
	We note that although hypotheses in~\eqref{h3} are formulated in terms of the final value of $c^\ast$, they also valid in terms of the value of $c^\ast$ defined so far. In particular, bounds above allow us to define the new error $ e^{(1)} $. To obtain an estimate for the size of $ e^{(1)} $, it only remains to bound the remainder of the Taylor expansion $\CR(\cur,\Delta\cur)$, that we express as $\CR=(\CR_\phi,\CR_I)$. Then, to estimate $\CR_j(\cur,\Delta\cur)(\theta)$, for any fixed $\theta\in\Delta(\rho-2\delta)$ and with ${ j\in\{\phi,I\}} $, we introduce the assistant function $ g_j(s)=F_j(\cur+s\Delta\cur) $. Since $g_j'(s)=DF_j(\cur+s\Delta\cur)\,\Delta\cur$, integrating $ g_j $ by parts we observe that
	\[
	\CR_j(\cur,\Delta\cur)=g_j(1)-g_j(0)-g_j'(0)=-\int_{0}^{1}(s-1)g_j''(s)ds.
	\]
	Thus, $  \normindice{  \CR_j}{\rho-2\delta}\leq \sup_{s\in[0,1]}\{\normindice{g_j''(s)}{\rho-2\delta}\}$. But, since 
	\[
	g_j''(s)=\pphi^{2}F_{j}(\cur+ s\Delta\cur)(\Delta\cur_{\phi})^{2}+2\pI\pphi F_{j}(\cur+ s\Delta\cur)(\Delta\cur_{\phi})(\Delta\cur_{I})+\pI^{2}F_{j}(\cur+ s\Delta\cur)(\Delta\cur_{I})^{2},
	\]
	it follows that 
	\[
	\normindice{\CR}{\rho-2\delta} \leq \frac{\ce\mu^{2}}{\gamma^{4}\delta^{4\nu}},\qquad
	\normindice{e^{(1)}}{\rho-2\delta} \leq \frac{\ce\mu^{2}}{\gamma^{4}\delta^{4\nu}},\qquad
	\normindice{e^{(1)}}{\rho-3\delta} \leq \frac{\ce\mu^{2}}{\gamma^{4}\delta^{4\nu+1}},
	\]
	and therefore, the bounds for $ e^{(1)} $ and $ (e^{(1)}) '  $ are met in terms of the expression for $\rho^{(1)}$ and $\mu^{(1)}$ defined on the statement. To get estimates above on $g_j''(s)$, a crucial aspect is that $\cur+s\Delta\cur\in\Delta(\rho_0)$ for any $s\in [0,1]$, so that the composition of $\cur+s\Delta\cur$ with $F$ and its derivatives is well defined for any $s\in [0,1]$. It only remains to bound $A^{(1)}-A$, where
	\[
	A^{(1)} =\Omegau\Omuu ((\tm^{(1)} )')^{\top}DF(\cur^{(1)} ) J(\cur^{(1)} )'.
	\]
	We notice that hypotheses in~\eqref{h3} on $\cur_\phi'$ and $\cur_I'$ and bound above on $(\Delta\cur)'$ guarantee that $\Omega^{(1)}$ is well defined in $\Delta(\rho-3\delta)$ and verifies the same bounds as $\Omega$. To get the bound for $ A^{(1)}-A$, we consider the telescopic sum 
	\begin{align*}
		A^{(1)} -A  =&
		(\Omegau-\Omega^{-1})\Omuu ((\tm^{(1)} )')^{\top}DF(\cur^{(1)} ) J(\cur^{(1)} )'\\
		&+\Omega^{-1}(\Omuu-\Omega^{-1}_+)((\tm^{(1)} )')^{\top}DF(\cur^{(1)} ) J(\cur^{(1)} )'\\
		&+\Omega^{-1}\Omega^{-1}_+((\tm^{(1)} )'-\cur_+')^{\top}DF(\cur^{(1)} ) J(\cur^{(1)} )'\\
		&+\Omega^{-1}\Omega^{-1}_+(\cur_+')^{\top}(DF(\cur^{(1)} )-DF(\cur)) J(\cur^{(1)} )'\\
		&+\Omega^{-1}\Omega^{-1}_+(\cur_+')^{\top}DF(\cur)J(\cur^{(1)}-\cur)'.
	\end{align*}
	Then, we estimate every difference in the equality above leading to a bound of the form
	\[
	\normindice{A^{(1)}-A}{\rho-3\delta}\leq \frac{\ce\mu}{\gamma^{2}\de^{2\nu+1}}.
	\]
	The only expressions involved in this telescopic sum for which the corresponding bounds are not completely straightforward are $\Omegau-\Omega^{-1}$ and $DF(\cur^{(1)} )-DF(\cur)$. For the first one we write:
	\[
	\Omegau-\Omega^{-1}=\Omega^{-1}\Omegau(\Omega-\Omega^{(1)}),
	\]
	and we estimate the size of $\Omega^{(1)}-\Omega$ in terms of the size of $(\cur^{(1)})'-\cur'=(\Delta\cur)'$ by also expressing $\Omega^{(1)}-\Omega$ in terms of a telescopic sum analogous to the one above. Indeed, we get the estimate $\|\Omegau-\Omega^{-1}\|_{\rho-3\delta}\leq \frac{\ce\mu}{\gamma^{2}\de^{2\nu+1}}$. To bound $ DF(\cur^{(1)})-DF(\cur)$, we proceed analously as we have done to bound $\mathcal{R}(\cur,\D\cur)$. For any fixed $\theta\in\Delta(\rho-2\delta)$ and for any couple $j,k\in\{\phi,I\} $, we consider the auxiliary function $h_{j,k}(s)=\partial_j F_k(\cur+s\Delta\cur)$, for $s\in[0,1]$. By the Fundamental Theorem of Calculus, we see that
	\[
	\partial_j F_k(\cur^{(1)})-\partial_j F_k(\cur)=\int_{0}^{1}\nabla(\partial_j F_k)(\cur+s\Delta\cur)\, \d s\cdot\Delta\cur,
	\]
	where $\nabla$ refers to the gradient operator with respect to $(\phi,I)$. Since all the involved compositions are well-defined for any $\theta\in\Delta(\rho-2\delta)$ and $s\in[0,1]$, we get an estimate of the form $\normindice{\partial_j F_k(\cur^{(1)})-\partial_j F_k(\cur)}{\rho-2\delta}\leq  \frac{\ce\mu}{\gamma^{2}\de^{2\nu}}$ for any entry of $ DF(\cur^{(1)})-DF(\cur) $.
  \end{proof}

 \begin{proof}[Proof of Theorem~\ref{TB}]
Our goal is to prove this result by the iterative application of Lemma~\ref{iterativelemma}. Hence, we start by considering $\ce=\ce(1/c_1,c_2,c_3,\nu)\geq 1$ the value provided by this iterative lemma, that we set fixed from now on along the proof of the theorem.
	\\
	\noindent{\bf First step of the iterative scheme.} To perform the first application (for $n=0$) of Lemma~\ref{iterativelemma}, we select the parametrization $\cur(\theta)=\tauind{0}(\T)=(\T,I^{*}_0)$, so that $\cur_\phi(\T)=0$ and $\cur_I(\theta)=I^{*}_0$. We also consider the constant values $\rho=\rho^{(0)}=\rho_0$ and $\delta=\delta^{(0)}=\rho_0/12$. In particular, $\rho^{(1)}=\rho^{(0)}-3\delta^{(0)}>0$. Clearly, the parametrization $\cur$ is analytic in $\Delta(\rho)=\Delta(\rho_0)$, and we can easily check that:
	\[
	\eind{0}(\T)=(f_{\phi}(\T,I_0^{*}),f_I(\T,I_0^{*})),
	\qquad
	\Aind{0}(\T)=-\alpha'(I_0^{*})-\pI f_{\phi}(\T,I_0^{*}).
	\]
	Hence, we can set $\mu=\mu^{(0)}=c$ and we have:
	\begin{align*}
		\normindice{\tauind{0}_{\phi}}{\rhoind{0}} & =0=\rho_0-\rhoind{0},
		\quad
		\normindice{\tauind{0}_I-I_0^{*}}{\rhoind{0}}=0\leq R_0-\frac{\ce\mu^{(0)}}{\gamma^{2}(\de^{(0)})^{2\nu}},\\
		\normindice{(\tauind{0}_s)'}{\rhoind{0}} & =0\leq\frac{1}{4}-\frac{\ce\mu^{(0)}}{\gamma^{2}(\de^{(0)})^{2\nu+1}},
		\quad
		|\ave{\Aind{0}}|\geq c_1-c\geq \frac{c_1}{2},
	\end{align*}
	for $s\in\{\phi,I\}$, provided that $\frac{\ce\mu^{(0)}}{\gamma^{2}(\de^{(0)})^{2\nu}}\leq\min\{R_0,\delta^{(0)}/4\}$ and $c\leq c_1/2$, conditions that are guaranteed by hypotheses~\eqref{condTB} on the statement. Thus, we can apply Lemma~\ref{iterativelemma} to $\cur^{(0)}$ and we can define a new parametrization of quasi-torus $\tauind{1}$ that verifies bounds on the statement of the lemma.\\
	\noindent{\bf A close expression for the iterative error.}
	To iterativelly apply Lemma~\ref{iterativelemma}, we are going to select, at each step $n\geq 0$ of the iterative scheme, the constant values $\delta=\delta^{(n)}=\delta^{(0)}/2^n$ and $\rho=\rho^{(n)}=\rho^{(0)}-3\sum_{j=0}^{n-1}\delta^{(j)}$, for which we clearly have $\rho^{(n)}\geq\rho_0/2$, for any $n\geq 0$. Let us suppose for a moment that we can freely iterate for any $n\geq 0$ by dealing with these selected values. Then, we have $\mu^{(0)}=c$ and we get the following iterative expression for the error $\mu=\mu^{(n)}$ after $n$ iterations:
	\[
		\muind{n+1}=\frac{\ce (\muind{n})^{2}}{\gamma^{4}(\deltaind{n})^{4\nu+1}}=\frac{\ce}{\gamma^{4}(\deltaind{0})^{4\nu+1}}2^{(4\nu+1)n} (\muind{n})^{2}.
	\]
	Simple computations show that we obtain
	\begin{align}
		\muind{n}&=\left(\frac{\ce}{\gamma^{4}(\deltaind{0})^{4\nu+1}}\right)^{2^{n}-1}2^{(4\nu+1)(2^{n}-(n+1))}(\muind{0})^{2^{n}}\nonumber\\
		&=\left(\frac{\ce}{\gamma^{4}(\deltaind{0})^{4\nu+1}}\right)^{-1}2^{-(4\nu+1)(n+1)}\left(\frac{2^{4\nu+1} \ce \muind{0}}{\gamma^{4}(\deltaind{0})^{4\nu+1}}\right)^{2^{n}},\label{mun}
	\end{align}
	expression that can easily be verified by induction. In particular, since hypotheses on $\mu^{(0)}=c$ mean that $\frac{2^{4\nu+1} \ce \muind{0}}{\gamma^{4}(\deltaind{0})^{4\nu+1}}\leq \frac{1}{2}$, we conclude that this expression verifies $\muind{n}\to 0$ as $n\to\infty$. In addition, we also have that the following estimates hold for every $n\geq 0$:
	\[
	\frac{\ce\muind{n}}{\gamma^{2}(\deltaind{n})^{2\nu}}\leq \frac{\ce \muind{0}}{\gamma^{2}(\deltaind{0})^{2\nu}}\left(\frac{1}{2}\right)^n,
	\qquad
	\frac{\ce\muind{n}}{\gamma^{2}(\deltaind{n})^{2\nu+1}}\leq \frac{\ce \muind{0}}{\gamma^{2}(\deltaind{0})^{2\nu+1}}\left(\frac{1}{2}\right)^n.
	\]
	Indeed, using that $2^n\geq n+1$ and that $\frac{2^{4\nu+1} \ce \muind{0}}{\gamma^{4}(\deltaind{0})^{4\nu+1}}\leq \frac{1}{2}$, we have:
	\begin{align*}
		\frac{\ce\muind{n}}{\gamma^{2}(\deltaind{n})^{2\nu}}&=\frac{\gamma^{2}(\deltaind{0})^{2\nu+1}}{2^{2\nu}} 2^{-(2\nu+1)(n+1)}\left(\frac{2^{4\nu+1}\ce \muind{0}}{\gamma^{4}( \deltaind{0})^{4\nu+1}}\right)^{n+1}\\
		&=\frac{2^{2\nu+1}\ce \muind{0}}{\gamma^{2}(\deltaind{0})^{2\nu}}2^{-(2\nu+1)(n+1)}\left(\frac{1}{2}\right)^n\\
		&\leq \frac{\ce \muind{0}}{\gamma^{2}(\deltaind{0})^{2\nu}}\left(\frac{1}{2}\right)^n,
	\end{align*}
	and analogously for the other case.\\
	{\bf General step.} We iteratively apply Lemma~\ref{iterativelemma} starting with the $0$-data selected above and suppose that we have been able to iterate $n$ times, for some $n\geq 1$, so we define the family of corrected parameterizations $ \{\tauind{j}\}_{j=0}^{n} $, for which the estimates provided by Lemma~\ref{iterativelemma} hold in the corresponding domain $\Delta(\rho^{(j)})$, for each $j=1,\ldots,n$. In particular, the size of the errors $\|e^{(j)}\|_{\rho^{(j)}}$ and $\|(e^{(j)})'\|_{\rho^{(j)}}$ is controlled by $\mu^{(j)}$ given by expression in~\eqref{mun}. We want to show that we can iterate again by showing that $\tauind{n}$ fit into the conditions needed to apply the Iterative Lemma. Specifically, we need to check, for any $n\geq 1$:
	\[
	\normindice{\cur_{\phi}^{(n)}}{\rho^{(n)}}\leq \rho_0-\rho^{(n)},
	\quad
	\normindice{\cur_{I}^{(n)}-I_0^{*}}{\rho^{(n)}}\leq R_0-\frac{\ce\mu^{(n)}}{\gamma^{2}(\de^{(n)})^{2\nu}},
	\quad
	\normindice{(\cur_{s}^{(n)})'}{\rho^{(n)}}\leq \frac{1}{4}-\frac{\ce\mu^{(n)}}{\gamma^{2}(\de^{(n)})^{2\nu+1}},
	\]
	for $s\in\{\phi,I\}$, and $|\ave{A^{(n)}}|\geq c_1/2$. Notice that
	\[
	\normindice{\tauphiind{n}}{\rhoind{n}}
	\leq
	\sum_{j=0}^{n-1} \normindice{\Delta\tauphiind{j }}{\rhoind{j+1}}
	\leq
	\sum_{j=0}^{n-1}\frac{\ce \muind{j}}{\gamma^{2}(\deltaind{j})^{2\nu}}
	\leq
	\frac{\ce \muind{0}}{\gamma^{2}(\deltaind{0})^{2\nu}}\sum_{j=0}^{n-1}\left(\frac{1}{2}\right)^j
	\leq
	\frac{2\ce \muind{0}}{\gamma^{2}(\deltaind{0})^{2\nu}},
	\]
	and the same bound holds for $\normindice{\cur_{I}^{(n)}-I_0^{*}}{\rho^{(n)}}$. We proceed analogously to get the bounds $\normindice{(\cur_{s}^{(n)})'}{\rho^{(n)}}\leq \frac{2\ce \muind{0}}{\gamma^{2}(\deltaind{0})^{2\nu+1}}$, for $s\in\{\phi,I\}$, by means of the sum of the bounds on $\normindice{(\Delta\cur_{s}^{(n)})'}{\rho^{(j+1)}}$ for $j=0,\ldots,n-1$. Finally, to control the average $\ave{A^{(n)}}$ we start by bounding 
	\[
	\normindice{\Aind{n}-\Aind{0}}{\rhoind{n}}\leq \sum_{j=0}^{n-1} \normindice{\Aind{j+1}-\Aind{j}}{\rhoind{j+1}}\leq \frac{2\ce \muind{0}}{\gamma^{2}(\deltaind{0})^{2\nu+1}}.
	\]
	Then, bounding the size of the average $\ave{A^{(n)}-A^{(0)}}$ by the norm of the function and using the estimate above $|\ave{\Aind{0}}|\geq c_1-c$, we obtain:
	\[
	|\ave{A^{(n)}}|\geq |\ave{\Aind{0}}|-|\ave{A^{(n)}-A^{(0)}}|\geq |\ave{\Aind{0}}|-\normindice{\Aind{n}-\Aind{0}}{\rhoind{n}}\geq c_1-c-\frac{2\ce \muind{0}}{\gamma^{2}(\deltaind{0})^{2\nu+1}}.
	\]
	Using that $\rho_0-\rho^{(n)}\geq\rho_0-\rho^{(1)}=3\delta^{(0)}=\rho_0/4$, for any $n\geq 1$, the bound $\normindice{\cur_{\phi}^{(n)}}{\rho^{(n)}}\leq \rho_0-\rho^{(n)}$ is achieved provided that $\frac{2\ce \muind{0}}{\gamma^{2}(\deltaind{0})^{2\nu}}\leq \rho_0/4$. Next, using that $\frac{\ce\muind{n}}{\gamma^{2}(\deltaind{n})^{2\nu}}\leq\frac{\ce \muind{0}}{\gamma^{2}(\deltaind{0})^{2\nu}}$ and $ \frac{\ce\muind{n}}{\gamma^{2}(\deltaind{n})^{2\nu+1}}\leq \frac{\ce \muind{0}}{\gamma^{2}(\deltaind{0})^{2\nu+1}}$, the desired bounds on $\normindice{\cur_{I}^{(n)}-I_0^{*}}{\rho^{(n)}}$, $\normindice{(\cur_{s}^{(n)})'}{\rho^{(n)}}$ and $|\ave{A^{(n)}}|$, are fullfiled provided that $\frac{3\ce \muind{0}}{\gamma^{2}(\deltaind{0})^{2\nu}}\leq R_0$, $\frac{3\ce \muind{0}}{\gamma^{2}(\deltaind{0})^{2\nu+1}}\leq 1/4$ and $c+\frac{2\ce \muind{0}}{\gamma^{2}(\deltaind{0})^{2\nu+1}}\leq c_1/2$, respectively. All these bounds are guaranteed by the conditions on $c$ on the statement of the theorem. \\
	{\bf Convergence of the iterative scheme.} Once we have verified that hypotheses on the statement allow us to compute the full sequence of iterates $\{\cur^{(n)}\}_{n=0}^\infty$, the convergence of $\cur^*=\lim_{n\to\infty}\cur^{(n)}$ in $\Delta(\rho^*)$, with $\rho^*=\lim_{n\to\infty}\rho^{(n)}=\rho_0/2$, is straightforward as the series $\cur^*=\cur^{(0)}+\sum_{n=0}^\infty\Delta\cur^{(n)}$ is absolutely convergent in terms of the norm $\|\cdot\|_{\rho_0/2}$. Estimates on $\cur^*$ on the statement are also straightforward from computations above.
\end{proof}

\section{The Impact Map: Proof of Proposition~\ref{P}}\label{propP}
From the $2\pi$-periodic function $p$ of the statement, we define the functions
\[
	P_1(\tau,t_0):=\int_{0}^{\tau}p(s+t_0)\d s \quad \mbox{and} \quad P_2(\tau,t_0):=\int_{0}^{\tau}P_1(s,t_0)\d s.
\]
By considering the function $ \Put $, $ \Pdt $ and $a_0$  introduced in~\eqref{eq:p0p1p2}, we can rewrite $ P_1 $ and $ P_2 $ as
\begin{equation}\label{eq:P1P2}
	P_1(\tau,t_0)=a_0\tau+\tilde{P}_1(t_0+\tau)-\tilde{P}_1(t_0) \quad \mbox{and} \quad P_2(\tau,t_0)=a_0\frac{\tau^{2}}{2}-\tau \tilde{P}_1(t_0) +\tilde{P}_2(t_0+\tau)-\tilde{P}_2(t_0).
\end{equation}

With the functions $ P_1 $ and $ P_2 $ defined in~\eqref{eq:P1P2} we can provide the explicit solutions of the system originating from~\eqref{VF}. For $ x_0>0 $, we have
\begin{equation}\label{eq:x>0}
	\left\{\begin{aligned}
		&t_r^{\tau}(t_0,x_0,y_0;\e)=\tau+t_0,\\
	&x_r^{\tau} (t_0,x_0,y_0;\e)= x_0 +\tau y_0-\frac{{\tau}^{2}}{2}  + \e\;P_2(\tau,t_0),\\
	&y_r^{\tau}(t_0,x_0,y_0;\e)=  y_0 -\tau +\e\; P_1(\tau,t_0).
	\end{aligned}\right.
\end{equation}\vspace{0.2cm}
If $ x_1<0 $, we have
\begin{equation}\label{eq:x<0}
	\left\{\begin{aligned}
		&t_l^{\tau}(t_1,x_1,y_1;\e)=\tau+t_1,\\
		&x_l^{\tau}(t_1,x_1,y_1;\e)= x_1 +\tau y_1+\frac{{\tau}^{2}}{2}  + \e\;P_2(\tau,t_1), \\
		&y_l^{\tau}(t_1,x_1,y_1;\e)=  y_1 +\tau +\e\;P_1(\tau,t_1).
	\end{aligned}\right.
\end{equation}
By expressions in~\eqref{eq:x>0} and~\eqref{eq:x<0}, we notice that the times of impact $ \tmt (t_0,y_0;\e) $ and $  \tme (t_1,y_1;\e)$, introduced in~\eqref{eqaux1} and~\eqref{eqaux2}, respectively, cannot be explicitly obtained, so we cannot deal with explicit expressions for the impact maps $ \mathcal{P}_{\e}^{+}$, $ \mathcal{P}_{\e}^{-}$, and $ \mathcal{P}_{\e}$. For this reason, we propose the following approach to deal with them.

\noindent{\bf Asymptotic behaviour of the first impact time.}
Since we are interested in discussing the persistence of quasi-periodic invariant curves of $ \mathcal{P}_{\e}$ of arbitrarily large amplitude $ y_0 $ and $0<\e\ll 1$, we start by discussing an asymptotic expression for $ \tmt (t_0,y_0;\e) $ and $  \tme (t_1,y_1;\e)$ when $y_0\to +\infty$ and $y_1\to -\infty$, respectively. Using expressions~\eqref{eq:P1P2} for $P_2$ and~\eqref{eq:x>0} for $ x_r^{\tau}  $, equation~\eqref{eqaux1} for the first impact time $\tau^{+}=\tau^{+}(t_0,y_0;\e)$ reads as:
		\begin{equation}\label{eq:eqtau+}
		\tau^{+}y_0-\dfrac{1-a_0\e}{2}(\tau^{+})^2-\e\tau^{+}\tilde{P}_1(t_0)+\e\tilde{P}_2(t_0+\tau^{+})-\e\tilde{P}_2(t_0)=0.
		\end{equation}
                Hence, for $ y_0\gg 0$ sufficiently large, one expects the dominant part of the expression of $ \tau^{+} $ to be:
               \begin{equation}\label{eq:tau+0}
			\tau^{+}_0(t_0,y_0;\e)=\frac{2(y_0-\e\tilde{P}_1(t_0))}{1-a_0\e},
	       \end{equation}
               so it is natural to write
              	\begin{equation}\label{taumais}
		  \tau^{+}(t_0,y_0;\e)=\tau^{+}_0(t_0,y_0;\e)+\e \tau_*^{+}(t_0,y_0;\e),
	        \end{equation}
	        where $\tau_*^{+} $ is a small correction
(as $y_0\to +\infty$) satisfying properties to be given in the next lemma.
                \begin{lemma} \label{lemataup}
                  Let $\tilde{P}_1$ and $ \tilde{P}_2$ be the real analytic and $2\pi$-periodic functions introduced in~\eqref{eq:p0p1p2} which verify $ \tilde{P}_2'=\tilde{P}_1$  and $\|\tilde P_j\|_\rho\leq \tilde p$, $j=1,2$, for $0<\rho<1$ and $\tilde p>0$. Given $ y_0^{*} \geq 1$, $0<\overline{\rho}_0\leq\rho$, $\overline{ \rho}_0^{+}>0 $, and $ \tilde{\rho}_0^{+} >0$, such that $\overline{\rho}_0^{+}+4  \tilde{\rho}_0^{+} <\overline{\rho}_0$, we consider the set $\mathcal{D}^{+}_0=\mathcal{D}^{+}(y_0^{*},\overline{ \rho}_0^{+},\tilde{ \rho}_0^{+})$ (see~\eqref{eq:calD+}) and define $\e_*^+$ as
           \[
	     \e^{+}_{*}=\sup\left\{\e>0:  |a_0\e|\leq \dfrac{1}{2}\quad\mbox{and}\quad \overline{\rho}_0^{+}+4\tilde{\rho}_0^{+}+288\e\tilde{p}<\overline{\rho}_0 \right\}.
           \]
           Then, for every $ 0<\e\leq  \e^{+}_{*}$, the first impact time $\tau^{+}$ can be written as in~\eqref{eq:tau+0} and~\eqref{taumais}, with $  \tau_0^{+}(t_0,y_0;\e) $ and $  \tau_*^{+}(t_0,y_0;\e) $ being real analytic functions for $(t_0,y_0)\in\mathcal{D}^{+}_0$, $2\pi$-periodic in $t_0$, and satisfying:
	\begin{equation}\label{taumaiszeroandast}
	  \frac{|y_0|}{2}\leq |\tau^{+}_0(t_0,y_0;\e)|\leq 6|y_0|,
          \qquad |\tau_*^{+}(t_0,y_0;\e)|\leq \dfrac{32\tilde{p}}{|y_0|}.
	\end{equation}
 Moreover, given any $0<\hat\rho_0^+<\min\{\overline{\rho}_0^{+},\tilde{ \rho}_0^{+}\}/2$, there exists $ C_*^{+}= C_*^{+}(\tilde p,\hat\rho_0^+)>0$ such that 
		\begin{equation}\label{eq:C*+}
			\left|\partial^{i}_{t_0}\partial^{j}_{y_0}\tau_*^{+}(t_0,y_0;\e)\right|\leq \frac{C_*^{+}}{|y_0|},
		\end{equation}
		for every $ (t_0,y_0)\in\mathcal{D}^{+}(y_0^{*}+2\hat\rho_0^+,\overline{ \rho}_0^{+}-2\hat\rho_0^+,\tilde{ \rho}_0^{+}-2\hat\rho_0^+)$, $0<\e\leq\e^{+}_{*}$, and $ 0\leq i+j\leq 2 $.
\end{lemma}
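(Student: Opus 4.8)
The plan is to turn the implicit equation~\eqref{eq:eqtau+} for the first impact time into a fixed point equation for the small correction $\tau_*^{+}$ and to solve it by a contraction argument in a space of analytic functions weighted by $|y_0|$. Since along a genuine first impact $\tau^{+}\neq0$, the first move is to divide~\eqref{eq:eqtau+} by $\tau^{+}$ after rewriting the increment of $\tilde P_2$ as the divided difference
\[
Q(t_0,\tau)=\int_0^1\tilde P_1(t_0+s\tau)\,\d s=\frac{\tilde P_2(t_0+\tau)-\tilde P_2(t_0)}{\tau},
\]
which (by the integral representation together with $\tilde P_2'=\tilde P_1$) is jointly analytic in $(t_0,\tau)$ and $2\pi$-periodic in $t_0$ whenever the whole segment $\{t_0+s\tau:s\in[0,1]\}$ lies in $\Delta(\rho)$, and satisfies $Q(t_0,0)=\tilde P_1(t_0)$. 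Then~\eqref{eq:eqtau+} becomes $y_0-\tfrac{1-a_0\e}{2}\tau^{+}-\e\tilde P_1(t_0)+\e\,Q(t_0,\tau^{+})=0$, i.e.\ $\tau^{+}=\tau_0^{+}+\tfrac{2\e}{1-a_0\e}Q(t_0,\tau^{+})$ with $\tau_0^{+}$ as in~\eqref{eq:tau+0}; substituting the ansatz~\eqref{taumais} yields the fixed point equation $\tau_*^{+}=\mathcal{F}(\tau_*^{+})$, where $\mathcal{F}(h)=\tfrac{2}{1-a_0\e}Q\big(t_0,\tau_0^{+}(t_0,y_0;\e)+\e\,h(t_0,y_0)\big)$.

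Second, I would work in the Banach space of functions $h(t_0,y_0)$ analytic on $\mathcal{D}^{+}_0$, $2\pi$-periodic in $t_0$, with norm $\|h\|=\sup_{\mathcal{D}^{+}_0}|y_0|\,|h(t_0,y_0)|$, and show that $\mathcal{F}$ maps the closed ball $\{\|h\|\le 32\tilde p\}$ into itself and is a contraction there, for every $0<\e\le\e^{+}_{*}$. The two bounds in~\eqref{taumaiszeroandast} on $\tau_0^{+}$ follow at once from $1-a_0\e\in[\tfrac12,\tfrac32]$, $\|\tilde P_1\|_{\overline{\rho}_0^{+}}\le\tilde p$ and $|y_0|\ge y_0^{*}\ge1$. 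Given $\|h\|\le32\tilde p$ one then gets, for $\e\le\e^{+}_{*}$, the lower bound $|\tau_0^{+}+\e h|\ge|y_0|/4$ and, by tracking imaginary parts, $|\mathrm{Im}(t_0+s\tau)|<\overline{\rho}_0^{+}+4\tilde{\rho}_0^{+}+\mathcal{O}(\e\tilde p)$ for all $s\in[0,1]$, where $\tau=\tau_0^{+}+\e h$; this is exactly the point where the slack $288\,\e\tilde p<\overline{\rho}_0-\overline{\rho}_0^{+}-4\tilde{\rho}_0^{+}$ built into the definition of $\e^{+}_{*}$ is used, forcing $t_0+s\tau\in\Delta(\overline{\rho}_0)\subseteq\Delta(\rho)$. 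Hence $Q$ is well defined there and $|Q(t_0,\tau)|\le 2\tilde p/|\tau|\le 8\tilde p/|y_0|$, so $\|\mathcal{F}(h)\|\le 4\cdot8\tilde p=32\tilde p$; and, since $|\partial_\tau Q|\le\tilde p/2$ on that set, the mean value inequality along the segment joining the two arguments gives $\|\mathcal{F}(h_1)-\mathcal{F}(h_2)\|\le 2\e\tilde p\,\|h_1-h_2\|$, a genuine contraction because $\e\le\e^{+}_{*}$ forces $\e\tilde p<1/288$. The Banach fixed point theorem then yields a unique $\tau_*^{+}=\tau_*^{+}(t_0,y_0;\e)$ in the ball, analytic and $2\pi$-periodic in $t_0$ on $\mathcal{D}^{+}_0$, with $|\tau_*^{+}|\le 32\tilde p/|y_0|$, which together with~\eqref{taumais} is~\eqref{taumaiszeroandast}.

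Third, to see that $\tau_0^{+}+\e\tau_*^{+}$ is indeed the first impact time (and not merely some root of~\eqref{eq:eqtau+}), I would note that for real $(t_0,y_0)$ with $y_0\ge y_0^{*}$ the function $x_r^{\tau}(t_0,0,y_0;\e)=\tau y_0-\tfrac{\tau^{2}}{2}+\e P_2(\tau,t_0)$ is, for $\e\le\e^{+}_{*}$, an $\e$-perturbation of the concave parabola $\tau y_0-\tau^{2}/2$ whose only positive zero is $2y_0$; since $\partial_\tau x_r^{\tau}=y_r^{\tau}=y_0-\tau+\e P_1(\tau,t_0)$ remains positive for $\tau$ up to order $y_0$ and is negative near $\tau\approx 2y_0$, no zero can occur before $\tau_0^{+}+\e\tau_*^{+}$ and this zero is simple. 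Finally, the derivative estimates~\eqref{eq:C*+} are Cauchy estimates: for any point of $\mathcal{D}^{+}(y_0^{*}+2\hat\rho_0^{+},\overline{\rho}_0^{+}-2\hat\rho_0^{+},\tilde{\rho}_0^{+}-2\hat\rho_0^{+})$ the polydisc of radius $\hat\rho_0^{+}$ in each of $t_0,y_0$ lies inside $\mathcal{D}^{+}_0$ (here $\hat\rho_0^{+}<\min\{\overline{\rho}_0^{+},\tilde{\rho}_0^{+}\}/2$ and $y_0^{*}\ge1$ keep $\mathrm{Re}(y_0)>0$ and $|y_0|>y_0^{*}$), on which $|\tau_*^{+}|\le 32\tilde p/(|y_0|-\hat\rho_0^{+})\le 64\tilde p/|y_0|$; Cauchy's inequality for $0\le i+j\le2$ then gives $|\partial_{t_0}^{i}\partial_{y_0}^{j}\tau_*^{+}|\le \big(i!\,j!/(\hat\rho_0^{+})^{i+j}\big)\cdot 64\tilde p/|y_0|\le C_*^{+}(\tilde p,\hat\rho_0^{+})/|y_0|$.

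I expect the main obstacle to be reconciling the two competing requirements in the contraction step: keeping the argument $t_0+s\tau^{+}$ of $\tilde P_1,\tilde P_2$ inside the \emph{fixed} analyticity strip $\Delta(\rho)$ — which imposes the imaginary-part budget $\overline{\rho}_0^{+}+4\tilde{\rho}_0^{+}+\mathcal{O}(\e\tilde p)<\overline{\rho}_0$, the reason for the somewhat opaque definition of $\e^{+}_{*}$ — while at the same time preserving the $\mathcal{O}(1/|y_0|)$ decay of $\tau_*^{+}$ uniformly in $y_0$. Once these are arranged, the contraction estimate and the Cauchy estimates are routine.
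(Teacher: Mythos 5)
Your proposal is correct and follows essentially the same route as the paper: the same splitting $\tau^{+}=\tau_0^{+}+\e\tau_*^{+}$, the same fixed-point equation (your $\mathcal{F}$ written via the divided difference of $\tilde P_2$ coincides with the paper's $\mathcal{F}^{+}$), a contraction in the same $|y_0|$-weighted Banach space on the ball of radius $32\tilde p$ using exactly the slack $288\e\tilde p$ built into $\e^{+}_{*}$, and Cauchy estimates for~\eqref{eq:C*+} (applied directly to $\tau_*^{+}$ instead of to $y_0\tau_*^{+}$ as in the paper, which is immaterial). The only remarks are cosmetic: your Lipschitz step uses $\|\tilde P_1'\|_\rho=\|\tilde P_0\|_\rho\leq\tilde p$, which holds by~\eqref{des:rho} even though the lemma only restates the bounds for $j=1,2$, and your extra check that the root produced is indeed the \emph{first} impact time is a welcome addition that the paper leaves implicit.
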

                \begin{proof}
We start by observing that for any $(t_0,y_0)\in \mathcal{D}^{+}_0$ and $0<\e\leq \e^{+}_*$, the function $\tau^{+}_0$ of~\eqref{eq:tau+0} satisfies the estimates in~\eqref{taumaiszeroandast}. This is straightforward by observing that $|y_0|\geq y_0^*\geq 1$, $|a_0\e|\leq \dfrac{1}{2}$, and that $\e\tilde{p}<\dfrac{1}{2}$. Indeed, using that $1/2\leq |1-a_0\e|\leq 3/2$, we have:
		\[
			\frac{1}{2}\leq \frac{2}{3}\leq \frac{2}{|1-a_0\e|}\left(1-\dfrac{\e|\tilde{P}_1(t_0)|}{|y_0|}\right) \leq \left|\frac{\tau^{+}_0(t_0,y_0;\e)}{y_0}\right| \leq \frac{2}{|1-a_0\e|}\left(1+\dfrac{\e|\tilde{P}_1(t_0)|}{|y_0|}\right)\leq 6.
		\]
	        If we write $\tau^{+}$ as in~\eqref{taumais}, and we impose that it satisfies equation~\eqref{eq:eqtau+}, we have that $\tau_*^{+}=\tau_*^{+}(t_0,y_0;\e)$ satisfies the equation $\tau_*^{+}=\mathcal{F}^{+}(\tau_*^{+})$, where the non-lineal functional $\mathcal{F}^{+}=\mathcal{F}^{+}_\e$ is defined as:
		\[
		\mathcal{F}^{+}(\tau)(t_0,y_0)=\frac{2(\Pdt(t_0+\tmt_0(t_0,y_0;\e)+\e \tau(t_0,y_0;\e))-\Pdt(t_0))}{(1-a_0\e)(\tmt_0+\e \tau(t_0,y_0;\e))}.
		\]
We introduce the Banach space $(\mathcal{X},\normnind{\cdot}{\mathcal{D_0}^{+}})$ defined by:
		\[
		\mathcal{X}=\{\tau:\overline{\mathcal{D}_0^{+}}\to\mathbb{C},\,\mbox{$f$ real analytic in $\mathcal{D}^{+}_0$ and bounded up to the boundary},\, \normnind{\tau}{\mathcal{D}_0^{+}}<+\infty\},
		\]
where $\normnind{\tau}{\mathcal{D}_0^{+}}=\displaystyle\sup_{(t_0,y_0)\in\overline{\mathcal{D}^{+}_0}}\left\{\left|y_0\,\tau(t_0,y_0)\right|\right\}$.
                We denote as $\mathcal{B}_{32\tilde p}\subset\mathcal{X}$ the closed ball of center zero and radius $32\tilde p$ in terms of the norm $\normnind{\cdot}{\mathcal{D}^{+}_0}$. Since $\mathcal{B}_{32\tilde p}$ is a closed set of $(\mathcal{X},\normnind{\cdot}{\mathcal{D}^{+}})$, it constitutes a complete metric space in terms of the distance induced by the considered norm. We are going to prove that if $\e>0$ is as in the statement, then $\mathcal{F}^{+}:\mathcal{B}_{32\tilde p}\to\mathcal{B}_{32\tilde p}$ is a contraction. In this way, using the fixed point theorem, we establish the well defined character of $\tau_*^{+}$ in $\mathcal{D}^+_0$ as well as the bound $|\tau_*^{+}(t_0,y_0;\e)|\leq \dfrac{32\tilde{p}}{|y_0|}$. To achieve this purpose, first we show that that $\mathcal{F}^{+}(\tau)$ is well defined and belongs to $\mathcal{X}$, for every $\tau\in\mathcal{B}_{32\tilde{p}}$. First of all, we notice that if $ \tau\in\mathcal{B}_{32\tilde{p}}$, $(t_0,y_0)\in\mathcal{D}^+_0$ and $0<\e\leq\e^+_*$, then (we omit the dependence of $\tau$ and $\tau_0^+$ on $t_0$, $y_0$ and $\e$ unless necessary)
	\begin{align*}
			|{\rm Im}(t_0+\tmt_0+\e \tau)|
			&=\left|{\rm Im}(t_0)+\frac{2}{1-a_0\e}({\rm Im}(y_0)+\e{\rm Im}(\Put(t_0)))+\e{\rm Im}(\tau)\right|\\
			&\leq \robzm+4\rotzm+4\e\tilde{p}+32\e\tilde{p}\\
			&\leq \overline{\rho}_0,
		\end{align*}
		where we are bounding the size of the imaginary parts of $\Put$ and $\tau$ by its norms, and using that since $y_0^*\geq 1$ then $|\tau(t_0,y_0)|\leq 32\tilde{p}$. This means that the composition $ \Pdt(t_0+\tmt_0+\e \tau) $ is well defined and also analytic. Moreover, we also have that:
		\[
		|\tmt_0+\e \t|\geq |\tmt_0|-\e|\t|\geq \frac{|y_0|}{2}-\e\frac{|y_0\t|}{|y_0|}\geq\frac{|y_0|}{2}\left(1-\frac{64\e\tilde{p}}{y_0^{*}}\right) \geq \frac{|y_0|}{4}\geq\frac{1}{4}.
		\]
		Computations above guarantee the well defined character of $\mathcal{F}^+$.
		Next, we compute
		\[
		\Ffm(0)=\frac{2(\Pdt(t_0+\tmt_0)-\Pdt(t_0))}{(1-a_0\e)\tmt_0}.
		\]
Using bounds~\eqref{taumaiszeroandast} on $\tmt_0$, we have that
		\begin{equation*}
			|\Ffm(0)(t_0,y_0;\e)|\leq 16\frac{\tilde{p}}{|y_0|}.
		\end{equation*}
		Therefore, $ \normnind{\Ffm(0)}{\Dfm_0}<16\tilde{p}$ and, consequently, $\Ffm(0)\in\mathcal{B}_{32\rot}$. Now, we take $\t,\tb\in \mathcal{B}_{32\rot}$. Then:
		\begin{align*}
			|\Ffm(\t)-\Ffm(\tb)|=&
			\left|
			\frac{2(\Pdt(t_0+\tmt_0+\e \t)-\Pdt(t_0))}{(1-a_0\e)(\tmt_0+\e \t)}-\frac{2(\Pdt(t_0+\tmt_0+\e \tb)-\Pdt(t_0))}{(1-a_0\e)(\tmt_0+\e \tb)}
			\right|\\
			\leq &
			4\left|\frac{\Pdt(t_0+\tmt_0+\e \t)-\Pdt(t_0+\tmt_0+\e \tb)}{\tmt_0+\e \t}
			\right| \\
			&
			+4\e\left|
			\dfrac{(\Pdt(t_0+\tmt_0+\e \tb)-\Pdt(t_0))(\t-\tb)}{(\tmt_0+\e \t)(\tmt_0+\e \tb)}
			\right|.
		\end{align*}
		Thus, it follows from the Mean Value Theorem that 
		\begin{align*}
			\left|\frac{\Pdt(t_0+\tmt_0+\e \t)-\Pdt(t_0+\tmt_0+\e \tb)}{\tmt_0+\e \t}\right|\leq 4\e\tilde{p}|\t-\tb|.
		\end{align*}
                	On the other hand
		\[
		\left| \dfrac{(\Pdt(t_0+\tmt_0+\e \tb)-\Pdt(t_0))(\t-\tb)}{(\tmt_0+\e \t)(\tmt_0+\e \tb)} \right|
		\leq 32\tilde{p}|\t-\tb|.
		\]
		Then, taking into account both bounds and knowing that $288\e\tilde{p}\leq 1 $, it follows that
		\begin{align*}
			\normnind{\Ffm(\t)-\Ffm(\tb)}{\Dfm_0}\leq  144\e\tilde{p}\normnind{\t-\tb}{\Dfm_0}\leq \frac{1}{2}\normnind{\t-\tb}{\Dfm_0},
		\end{align*}
		which means that $\Ffm:\mathcal{B}_{32\tilde{p}}\to\mathcal{X}$ is a contraction. Finally, if $ \t\in\mathcal{B}_{32\tilde{p}} $, then
		\begin{align*}
			\normnind{\Ffm(\t)}{\Dfm_0}&\leq \normnind{\Ffm(\t)-\Ffm(0)+\Ffm(0)}{\Dfm_0}\\
			&\leq \normnind{\Ffm(\t)-\Ffm(0)}{\Dfm_0}+\normnind{\Ffm(0)}{\Dfm}\\
			&\leq \frac{1}{2} \normnind{\t}{\Dfm_0}+16\tilde{p}\\
			&\leq 32\tilde{p},
		\end{align*}
		which means that $\Ffm:\mathcal{B}_{32\tilde{p}}\to\mathcal{B}_{32\tilde{p}}$ is well defined and it is also a contraction as wanted.
Consequently, $\Ffm$ has a unique fixed point $ \tau^{+}_* \in\mathcal{B}_{32\tilde{p}}$ and, therefore, it satisfies bounds in~\eqref{eq:C*+}.

To obtain the bounds for the first and second order partial derivatives of $\tmt_{*}$, with respect $t_0$ and $y_0$, we apply standard Cauchy estimates to the function $\etap(t_0,y_0; \e)=y_0\tmest(t_0,y_0;\e)$. We use that $|\etap|\leq  32\tilde{p}$ in $\Dfm_0=\mathcal{D}^{+}(y_0^{*},\overline{ \rho}_0^{+},\tilde{ \rho}_0^{+})$ and we shrink the complex width of the domain by $\hat\rho_0^+$ when performing each derivative. In this way, it is not difficult to see that there is a constant $C_*^{+}= C_*^{+}(\tilde p,\hat\rho_0^+)>0$ for which equation~\eqref{eq:C*+} holds.
                \end{proof}
                We can analogously discuss the asymptotic behaviour of the time for impact of the negative solutions $\tau^{-}=\tau^{-}(t_1,y_1;\e)$. From expressions in~\eqref{eq:x<0} for the integration of~\eqref{H1} for $x<0$, it verifies the equation:
	\[
		\tau^{-}y_1+\dfrac{1+a_0\e}{2}(\tau^{-})^2-\e\tau^{-}\tilde{P}_1(t_1)+\e\tilde{P}_2(t_1+\tau^{-})-\e\tilde{P}_2(t_1)=0,
	\]
which motivates to write, if $y_1\ll 0$,
        \begin{equation}\label{taumenos}
  \tau^{-}(t_1,y_1;\e)=\tau^{-}_0(t_1,y_1;\e)+\e \tau_*^{-}(t_1,y_1;\e),
  \quad
  \tau^{-}_0(t_1,y_1;\e)=-\frac{2(y_1-\e\tilde{P}_1(t_1))}{1+a_0\e}.
\end{equation}
        Estimates concerning $\tau^{-}_0(t_1,y_1;\e)$ and $\tau_*^{-}(t_1,y_1;\e)$ are stated in Lemma~\ref{lemataumenos}, whose proof is completely analogous to that of lemma~\ref{lemataup}.
   \begin{lemma} \label{lemataumenos}
Let $\tilde{P}_1$ and $ \tilde{P}_2$ be as in lemma~\ref{lemataup}. Given $ y_1^{*} \geq 1$, $0<\overline{\rho}_1\leq\rho$, $\overline{ \rho}_1^{-}>0 $, and $ \tilde{\rho}_1^{-} >0$, such that $\overline{\rho}_1^{-}+4  \tilde{\rho}_1^{-} <\overline{\rho}_1$, we consider the set $\mathcal{D}^{-}_1=\mathcal{D}^{+}(y_1^{*},\overline{ \rho}_1^{-},\tilde{ \rho}_1^{-})$ (see~\eqref{eq:calD-}) and define $\e_*^-$ as
           \[
	     \e^{-}_{*}=\sup\left\{\e>0:  |a_0\e|\leq \dfrac{1}{2}\quad\mbox{and}\quad \overline{\rho}_1^{-}+4\tilde{\rho}_1^{-}+288\e\tilde{p}<\overline{\rho}_1 \right\}.
           \]
           Then, for every $ 0<\e\leq  \e^{-}_{*}$, the first impact time $\tau^{-}$ can be written as in~\eqref{taumenos}, with $\tau_0^{-}(t_1,y_1;\e) $ and $  \tau_*^{-}(t_1,y_1;\e) $ being real analytic functions for $(t_1,y_1)\in\mathcal{D}^{-}_1$, $2\pi$-periodic in
           $t_1$, and satisfying:
           \[
           \frac{|y_1|}{2}\leq |\tau^{-}_1(t_1,y_1;\e)|\leq 6|y_1|,\qquad 
 |\tau_*^{-}(t_1,y_1;\e)|\leq \dfrac{32\tilde{p}}{|y_1|}
 \]
 Besides that, given any $0<\hat\rho_1^-<\min\{\overline{\rho}_1^{-},\tilde{ \rho}_1^{-}\}/2$, there exists $ C_*^{-}= C_*^{-}(\tilde p,\hat\rho_1^-)>0$ such that 
		\[
			\left|\partial^{i}_{t_1}\partial^{j}_{y_1}\tau_*^{-}(t_1,y_1;\e)\right|\leq \frac{C_*^{-}}{|y_1|},
		\]
		for every $ (t_1,y_1)\in\mathcal{D}^{-}(y_1^{*}+2\hat\rho_1^-,\overline{ \rho}_1^{-}-2\hat\rho_1^-,\tilde{ \rho}_1^{-}-2\hat\rho_1^-)$, $0<\e\leq\e^{-}_{*}$, and $ 0\leq i+j\leq 2 $.
   \end{lemma}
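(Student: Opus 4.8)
The plan is to reproduce, almost verbatim, the proof of Lemma~\ref{lemataup}; the only genuine changes are a few sign flips and the replacement of the factor $1-a_0\e$ by $1+a_0\e$, and since $|a_0\e|\leq 1/2$ forces $1/2\leq|1+a_0\e|\leq 3/2$ exactly as before, none of the quantitative estimates are affected. First I would substitute $\tau^{-}=\tau_0^{-}+\e\tau_*^{-}$, with $\tau_0^{-}$ as in~\eqref{taumenos}, into the defining equation for $\tau^{-}$ recalled just before the statement; factoring $\tau^{-}$ out of its first three terms and isolating it shows that $\tau_*^{-}$ must be a fixed point $\tau_*^{-}=\mathcal{F}^{-}(\tau_*^{-})$ of the functional
\[
\mathcal{F}^{-}(\tau)(t_1,y_1)=-\frac{2\bigl(\tilde{P}_2(t_1+\tau_0^{-}(t_1,y_1;\e)+\e\tau(t_1,y_1))-\tilde{P}_2(t_1)\bigr)}{(1+a_0\e)\bigl(\tau_0^{-}(t_1,y_1;\e)+\e\tau(t_1,y_1)\bigr)}.
\]
The estimates $|y_1|/2\leq|\tau_0^{-}|\leq 6|y_1|$ follow immediately from $|y_1|\geq y_1^{*}\geq 1$, $|a_0\e|\leq 1/2$, $\e\tilde p<1/2$, and the crucial sign remark that $\mathrm{Re}(y_1)<0$ makes $-y_1$ (hence $\tau_0^{-}$) have positive real part, which is the exact counterpart of the roles played by $y_0>0$ and $\tau_0^{+}$ in Lemma~\ref{lemataup}.

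Second, I would set up the same Banach space: $\mathcal{X}$ consisting of functions real analytic on $\mathcal{D}^{-}_1=\mathcal{D}^{-}(y_1^{*},\overline{\rho}_1^{-},\tilde\rho_1^{-})$ (see~\eqref{eq:calD-}) and bounded up to its boundary, equipped with the norm $\normnind{\tau}{\mathcal{D}^{-}_1}=\sup_{(t_1,y_1)\in\overline{\mathcal{D}^{-}_1}}|y_1\,\tau(t_1,y_1)|$, together with the closed ball $\mathcal{B}_{32\tilde p}$ of radius $32\tilde p$. The verification that $\mathcal{F}^{-}:\mathcal{B}_{32\tilde p}\to\mathcal{B}_{32\tilde p}$ is a contraction proceeds as in Lemma~\ref{lemataup}: for $\tau\in\mathcal{B}_{32\tilde p}$ one checks $|\mathrm{Im}(t_1+\tau_0^{-}+\e\tau)|\leq\overline{\rho}_1^{-}+4\tilde\rho_1^{-}+36\e\tilde p\leq\overline{\rho}_1$ — using the definition of $\e_*^{-}$, which gives $36\e\tilde p<288\e\tilde p$ — so the composition $\tilde{P}_2(t_1+\tau_0^{-}+\e\tau)$ is well defined and analytic; then $|\tau_0^{-}+\e\tau|\geq|y_1|/4\geq 1/4$, $\normnind{\mathcal{F}^{-}(0)}{\mathcal{D}^{-}_1}\leq 16\tilde p$, and, by the Mean Value Theorem together with $288\e\tilde p\leq 1$, $\normnind{\mathcal{F}^{-}(\tau)-\mathcal{F}^{-}(\bar\tau)}{\mathcal{D}^{-}_1}\leq 144\e\tilde p\,\normnind{\tau-\bar\tau}{\mathcal{D}^{-}_1}\leq\tfrac12\normnind{\tau-\bar\tau}{\mathcal{D}^{-}_1}$; combining the last two bounds gives $\normnind{\mathcal{F}^{-}(\tau)}{\mathcal{D}^{-}_1}\leq 32\tilde p$. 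The Banach fixed-point theorem then yields a unique $\tau_*^{-}\in\mathcal{B}_{32\tilde p}$, automatically real analytic on $\mathcal{D}^{-}_1$ and satisfying $|\tau_*^{-}(t_1,y_1;\e)|\leq 32\tilde p/|y_1|$, which together with $\tau^{-}=\tau_0^{-}+\e\tau_*^{-}$ gives the stated expressions and bounds.

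Finally, the derivative estimates come from applying standard Cauchy estimates to $\eta^{-}(t_1,y_1;\e)=y_1\tau_*^{-}(t_1,y_1;\e)$, which obeys $|\eta^{-}|\leq 32\tilde p$ on $\mathcal{D}^{-}_1$: shrinking each of the two complex widths by $\hat\rho_1^{-}$ before taking each of the (at most two) partial derivatives produces a constant $C_*^{-}=C_*^{-}(\tilde p,\hat\rho_1^{-})>0$ with $|\partial_{t_1}^{i}\partial_{y_1}^{j}\tau_*^{-}|\leq C_*^{-}/|y_1|$ on $\mathcal{D}^{-}(y_1^{*}+2\hat\rho_1^{-},\overline{\rho}_1^{-}-2\hat\rho_1^{-},\tilde\rho_1^{-}-2\hat\rho_1^{-})$. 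I do not expect a genuine obstacle beyond bookkeeping; the single point that demands care is confirming that replacing $1-a_0\e$ by $1+a_0\e$ and $y_0>0$ by $y_1$ with $\mathrm{Re}(y_1)<0$ leaves every inequality intact, which it does precisely because the symmetric bounds $1/2\leq|1\pm a_0\e|\leq 3/2$ and the positivity of $\mathrm{Re}(-y_1)$ on $\mathcal{D}^{-}$ are the exact analogues of the ingredients used in Lemma~\ref{lemataup}.
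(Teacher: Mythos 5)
Your proposal is correct and follows exactly the route the paper intends: the paper does not write out a separate proof of Lemma~\ref{lemataumenos} but simply states that it is ``completely analogous to that of Lemma~\ref{lemataup},'' and you have carried out that analogy faithfully, with the correct sign on the functional $\mathcal{F}^{-}$, the replacement of $1-a_0\e$ by $1+a_0\e$ (harmless since $\tfrac12\leq|1\pm a_0\e|\leq\tfrac32$), the domain $\mathcal{D}^{-}_1=\mathcal{D}^{-}(y_1^{*},\overline{\rho}_1^{-},\tilde\rho_1^{-})$, and the same fixed-point and Cauchy-estimate steps.
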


   \noindent{\bf Expressions of Impact Map $\mathcal{P}_{\e}$.}
   We use the asymptotic expressions for the impact times $ \tmt $ and $ \tme $ obtained in lemmas~\ref{lemataup} and~\ref{lemataumenos} to provide formulas for the half positive impact map $ \mathcal{P}_{\e}^{+}(t_0,y_0)$ and the half negative impact map $ \mathcal{P}_{\e}^{-}(t_1,y_1)$ which make apparent the dominant terms of both maps when $y_0\gg 0$ and $y_1\ll 0$, respectively. Then, by composition of both maps, we obtain the corresponding expression $\mathcal{P}_{\e}= \mathcal{P}_{\e}^{-}\circ\mathcal{P}_{\e}^{+}(t_0,y_0)$.

   Let us denote $(t_1(t_0,y_0;\e),y_1(t_0,y_0;\e))= \mathcal{P}_{\e}^{+}(t_0,y_0)$. Using formulas in~\eqref{pmais},  \eqref{eq:x>0}, \eqref{eq:tau+0}, and~\eqref{taumais}, we obtain:
   \begin{equation}\label{eq:t1}
   t_1(t_0,y_0;\e)=t_0+\tmt(t_0,y_0;\e)=t_0+\frac{2(y_0-\e\tilde{P}_1(t_0))}{1-a_0\e}+\e \tmest(t_0,y_0;\e),
   \end{equation}
and also using~\eqref{eq:P1P2}:
\begin{align}
  y_1(t_0,y_0;\e)&=y_0- \tmt(t_0,y_0;\e) +\e P_1(\tmt(t_0,y_0;\e),t_0)   \nonumber
  \\
	&=-y_0+\e\;(\Put(t_1(t_0,y_0;\e))+\Put(t_0)-(1-a_0\e)\tmest(t_0,y_0;\e)). \label{eq:y1}
\end{align}
Let us denote $(\tbo(t_0,y_0;\e),\ybo(t_0,y_0;\e))=\mathcal{P}_{\e}(t_0,y_0)$. By using definitions above, we have that $(\tbo(t_0,y_0;\e),\ybo(t_0,y_0;\e))=\mathcal{P}_{\e}^{-}(t_1(t_0,y_0;\e),y_1(t_0,y_0;\e))$. We combine computations and formulas above with those in~\eqref{pmenos}, \eqref{eq:x<0}, and~\eqref{taumenos} to obtain expressions below for $\tbo(t_0,y_0;\e)$ and $\ybo(t_0,y_0;\e)$. Although in these computations $t_1= t_1(t_0,y_0;\e)$ and $y_1=y_1(t_0,y_0;\e)$ through formulas in~\eqref{eq:t1} and~\eqref{eq:y1}, we omit their dependence on $t_0$, $y_0$, and $\e$. 
We have:
\begin{align}
	\tbo(t_0,y_0;\e)&=t_1+ \tme(t_1,y_1;\e) \nonumber\\
	&=t_0+\frac{2(y_0-\e\tilde{P}_1(t_0))}{1-a_0\e}+\e \tmest(t_0,y_0;\e) -\frac{2(y_1-\e\Put(t_1))}{1+a_0\e}+\e\tau_*^{-}(t_1,y_1;\e) \nonumber \\
	&=t_0+\frac{4y_0}{1-a_0^{2}\e^{2}}+\e\left(-\frac{4\Put(t_0)}{1-a_0^{2}\e^{2}}+\frac{3-a_0\e}{1+a_0\e}\tmest(t_0,y_0;\e)+\tmeest(t_1,y_1;\e)\right),\label{eq:tbo}
\end{align}
and (we also omit the dependence of $\tbo=\tbo(t_0,y_0;\e)$ on $t_0$, $y_0$, and $\e$) using again~\eqref{eq:P1P2} and~\eqref{eq:x<0}, and formula~\eqref{taumenos} for $ \tau^{-} $,
\begin{align*}
  \ybo(t_0,y_0;\e)&=y_1+\tau^-(t_1,y_1;\e)+\e P_1(\tau^-(t_1,y_1;\e),t_1)\\
  &=  y_1+ (1+a_0\e)\tme(t_1,y_1;\e)+\e(\Put(\tbo)-\Put(t_1))\\
  &=-y_1+\e\left(\Put(\tbo)+\Put(t_1)+(1+a_0\e)\tmeest(t_1,y_1;\e)\right)\\
	&=y_0+\e\left(\Put(\tbo)-\Put(t_0)+(1-a_0\e)\tmest(t_0,y_0;\e)+(1+a_0\e)\tmeest(t_1,y_1;\e)\right).
\end{align*}
Consequently, the impact map in coordinates $(t_0,y_0)$ takes the form:
\begin{equation*}
	\mathcal{P}_{\e}	:\left\{\begin{aligned}
		\tbo&=t_0+ \alpha_{\e}(y_0)+\e f_{t_0}(t_0,y_0;\e),\\
		\ybo&=y_0+\e f_{y_0}(t_0,y_0;\e),
	\end{aligned}\right.
\end{equation*}
with
\begin{equation}\label{eq:alphaep}
  \alpha_{\e}(y_0)=\frac{4y_0}{1-a_0^{2}\e^{2}}, 
\end{equation}
\begin{equation}\label{ft0}
	f_{t_0}(t_0,y_0;\e)=-\frac{4\Put(t_0)}{1-a_0^{2}\e^{2}}+\frac{3-a_0\e}{1+a_0\e}\tmest(t_0,y_0;\e)+\tmeest(t_1(t_0,y_0;\e),y_1(t_0,y_0;\e);\e),
\end{equation}
and
\begin{align}
  f_{y_0}(t_0,y_0;\e)=&\Put(\tbo(t_0,y_0;\e))-\Put(t_0)+(1-a_0\e)\tmest(t_0,y_0,\e) \nonumber \\
 & +(1+a_0\e)\tmeest(t_1(t_0,y_0,\e),y_1(t_0,y_0,\e);\e),\label{fy0}
\end{align}
where $t_1(t_0,y_0;\e)$, $y_1(t_0,y_0;\e)$ are given by~\eqref{eq:t1}, \eqref{eq:y1}, respectively, and $\tbo=\tbo(t_0,y_0;\e)$ is given by the first component of~\eqref{eq:Pe}.

\noindent{\bf Estimates on the Impact Map $\mathcal{P}_{\e}$ (Proof of Proposition~\ref{P}).}
The most important aspect to be addressed in order to provide quantitative estimates for $\mathcal{P}_{\e}$ is to specify a complex domain for $(t_0,y_0)$ in which we can ensure that the map is well defined and in which we can use the estimates on the impact times $\tau^+$ and $ \tau^-$ provided by Lemmas~\ref{lemataup} and~\ref{lemataumenos}, respectively. This is stated precisely in Proposition~\ref{P} and below we give the details of the proof.

  With the same notations and hypotheses of Lemma~\ref{lemataup}, we select quantities below to apply Lemmas~\ref{lemataup} and~\ref{lemataumenos}, respectively.\\
\noindent $\bullet$ We set $y_0^{*}=3$, $\overline{\rho}_0=\rho$, $\overline{ \rho}_0^{+}=\rho/8$, $\tilde{\rho}_0^{+} =\rho/48$ and $\hat\rho_0^+=\rho/192$ in Lemma~\ref{lemataup}. Then, the value $\e^{+}_{*}>0$ on the statement of the lemma is defined by the conditions $|a_0\e|\leq 1/2$ and $288\e\tilde{p}<19\rho/24$. Consequently, if $C_*^{+}= C_*^{+}(\tilde p,\hat\rho_0^+)>0$ is the constant provided by Lemma~\ref{lemataup}, the following estimates hold if $ (t_0,y_0)\in\mathcal{D}^{+}_0:=\mathcal{D}^{+}(4,11\rho/96,\rho/96)$,  $0<\e\leq\e^{+}_{*}$, and $ 0\leq i+j\leq 2 $:
    \[
     \frac{|y_0|}{2}\leq |\tau^{+}_0(t_0,y_0;\e)|\leq 6|y_0|,
     \qquad
     |\tau_*^{+}(t_0,y_0;\e)|\leq \dfrac{32\tilde{p}}{|y_0|},
     \qquad
\left|\partial^{i}_{t_0}\partial^{j}_{y_0}\tau_*^{+}(t_0,y_0;\e)\right|\leq \frac{C_*^{+}}{|y_0|}.
\]
We observe that in this case $\mathcal{D}^{+}_0\subset\mathcal{D}^{+}(y_0^{*}+2\hat\rho_0^+,\overline{ \rho}_0^{+}-2\hat\rho_0^+,\tilde{ \rho}_0^{+}-2\hat\rho_0^+)$.\\ 
\noindent $\bullet$ We set $y_1^{*}=1$, $\overline{\rho}_1=\rho$, $\overline{ \rho}_1^{-}=\rho/3$, $\tilde{\rho}_1^{-} =\rho/12$ and $\hat\rho_1^-=\rho/48$ in Lemma~\ref{lemataumenos}. Then, the value $\e^{-}_{*}>0$ on the statement of the lemma is defined by the conditions $|a_0\e|\leq 1/2$ and $288\e\tilde{p}<\rho/3$. Consequently, if $C_*^{-}= C_*^{-}(\tilde p,\hat\rho_1^-)>0$ is the constant provided by Lemma~\ref{lemataumenos}, the following estimates hold if $ (t_1,y_1)\in\mathcal{D}^{-}_0:=\mathcal{D}^{-}(2,7\rho/24,\rho/24)$,  $0<\e\leq\e^{-}_{*}$, and $ 0\leq i+j\leq 2$:
    \[
     \frac{|y_1|}{2}\leq |\tau^{-}_0(t_1,y_1;\e)|\leq 6|y_1|,
     \qquad
     |\tau_*^{-}(t_1,y_1;\e)|\leq \dfrac{32\tilde{p}}{|y_1|},
     \qquad
\left|\partial^{i}_{t_1}\partial^{j}_{y_1}\tau_*^{-}(t_1,y_1;\e)\right|\leq \frac{C_*^{-}}{|y_1|}.
    \]
    Now, what we observe is that $\mathcal{D}^{-}_0\subset\mathcal{D}^{-}(y_1^{*}+2\hat\rho_1^-,\overline{\rho}_1^{-}-2\hat\rho_1^-,\tilde{ \rho}_1^{-}-2\hat\rho_1^-)$.

    We set $\overline{\rho}=11\rho/96$ and $\tilde{\rho}=\rho/96$ as the values in the statement of Proposition~\ref{P}, so that we have $\mathcal{D}^{+}(4,\overline{\rho},\tilde{\rho})=\mathcal{D}^{+}_0$. The value $\e=\e^*_{\mathcal{P}}$ of the proposition is defined by the conditions $|a_0\e|\leq 1/2$ and $864\e\tilde{p}<\rho$ which, in particular, guarantee that $\e^*_{\mathcal{P}}<\min\{\e^{+}_{*},\e^{-}_{*}\}$. For any $(t_0,y_0)\in\mathcal{D}^{+}_0$ and $0<\e<\e^*_{\mathcal{P}}$, we have (we are using~\eqref{eq:t1}, \eqref{eq:y1}, estimates above provided by Lemmas~\ref{lemataup} and~\ref{lemataumenos} and the standard trick of bounding the size of the imaginary part of a complex number by its modulus when needed):
    \begin{align*}
      |{\rm Im}(t_1(t_0,y_0;\e))| \leq &
      |{\rm Im}(t_0)|+4\left(|{\rm Im}(y_0)|+\e\|\tilde{P}_1\|_\rho\right)+\frac{32\tilde{p}\e}{|y_0|}
      <\frac{15}{96}\rho+12\tilde{p}\e\leq \dfrac{7}{24}\rho,\\
      |{\rm Im}(y_1(t_0,y_0;\e))|\leq & |{\rm Im}(y_0)|+\e\left(2\|\tilde{P}_1\|_\rho+\frac{3}{2}\frac{32\tilde{p}\e}{|y_0|}\right)<\frac{\rho}{96}+14\tilde{p}\e\leq\frac{\rho}{24},
    \end{align*}
    where we are bounding  $|\Put(t_1(t_0,y_0;\e))|\leq \|\tilde{P}_1\|_\rho$, since the first computation guarantees that $|{\rm Im}(t_1(t_0,y_0;\e))|<\rho$. Moreover:
    \[
    |y_1(t_0,y_0;\e)|\geq |y_0|-\e\left(2\|\tilde{P}_1\|_\rho+\frac{3}{2}\frac{32\tilde{p}\e}{|y_0|}\right)>|y_0|\left(1-\frac{7}{2}\tilde{p}\e\right)\geq \frac{|y_0|}{2}
    \]
    and, in particular, $|y_1(t_0,y_0;\e)|\geq 2$. These estimates guarantee that $(t_1(t_0,y_0;\e),y_1(t_0,y_0;\e))\in\mathcal{D}^{-}_0$. In addition, we also consider~\eqref{eq:tbo} (recall that we are denoting $t_1=t_1(t_0,y_0;\e)$ and $y_1=y_1(t_0,y_0;\e)$):
    \begin{align*}
 |{\rm Im}(\tbo(t_0,y_0;\e))| \leq & |{\rm Im}(t_0)|+\frac{16}{3}|{\rm Im}(y_0)|+\e\left(\frac{16}{3}\tilde{p}+7\frac{32\tilde{p}}{|y_0|}+\frac{32\tilde{p}}{|y_1|}\right)<\frac{49}{288}\rho+\frac{232}{3}\tilde{p}\e\leq \rho.
\end{align*}
    Consequently, all the expressions and compositions of functions involved in formulas~\eqref{ft0} and~\eqref{fy0} are well defined within the selected ranges for $t_0$, $y_0$ and $\e$. So, the map $\mathcal{P}_{\e}(t_0,y_0;\e)$ is analytic in $\mathcal{D}^{+}_0=\mathcal{D}^{+}(4,\ov{\rho},\tilde{\rho})$, for any $0<\e<\e^*_{\mathcal{P}}$.

    To finish the proof of Proposition~\ref{P}, it only remains to establish the existence of a constant $\tilde{C}=\tilde{C}(a_0,\rho,\tilde{p})$ for which the bounds on the statement are fulfilled. To do this, we proceed analogously as we have done to define $c^*_1$ in the proof of the Iterative Lemma~\ref{iterativelemma} of the KAM part, i.e., we are going to redefine recursively an increasing value for $\tilde{C}$ in order to meet a finite list of bounds. The value of $\tilde{C}$ in the statement of the proposition is the final value for $\tilde{C}$ thus obtained. We have checked that all the compositions involved in the definition of $\mathcal{P}_\e$ by means of formulas from~\eqref{eq:t1} to~\eqref{fy0} are well defined. Therefore, all the bounds we have previously established throughout the proof of Proposition~\ref{P} apply to them. Consequently, we can bound all the objects involved by computing their derivatives by the chain rule and then applying such bounds to them. As an example, by computing the dertivatives of $t_1(t_0,y_0;\e)$ from~\eqref{eq:t1}, we obtain the following expressions (we omit most of the dependencies in $t_0$, $y_0$ and $\e$):
    \begin{align*}
      \partial_{t_0}t_1 & =  1-\frac{\e}{1-a_0\e}\tilde{P}_1'(t_0)+\e\partial_{t_0}\tau_*^+,\quad
      \partial_{y_0}t_1=\frac{2}{1-a_0\e}+\e\partial_{y_0}\tau_*^+,\\
      \partial^2_{t_0}t_1 & =  -\frac{\e}{1-a_0\e}\tilde{P}_1''(t_0)+\e\partial^2_{t_0}\tau_*^+,\quad
      \partial_{t_0}\partial_{y_0}t_1=\e\partial_{t_0}\partial_{y_0}\tau_*^+\quad
      \partial^2_{y_0}t_1=\e\partial^2_{y_0}\tau_*^+.
\end{align*}
       Hence, we can easily establish the following estimates for them if $(t_0,y_0)\in\mathcal{D}^{+}_0$ and $0<\e<\e^*_{\mathcal{P}}$ (for certain constant $\tilde C$ defined recursively): 
    \[
|\partial_{t_0}t_1|\leq \tilde C, \quad  |\partial_{y_0}t_1|\leq \tilde C, \quad |\partial^2_{t_0}t_1|\leq \tilde C\e,  \quad   |\partial_{t_0}\partial_{y_0}t_1|\leq \frac{\tilde C\e}{|y_0|}, \quad  |\partial^2_{y_0}t_1|\leq \frac{\tilde C\e}{|y_0|}.
\]
By proceeding similarly with formula~\eqref{eq:y1}, we obtain analogous bounds for the partial derivatives of $y_1=y_1(t_0,y_0;\e)$, just by replacing above $t_1$ by $y_1$. 

Finally, we deal with the first and second order derivatives with respect to $t_0$ and $y_0$ of $f_{t_0}$ (see~\eqref{ft0}), $\bar{t}_0$ (see~\eqref{eq:Pe} and~\eqref{eq:alphaep}), and $f_{y_0}$ (see~\eqref{fy0}). We express them in terms of the the derivatives of $t_1$, $y_1$, $\tau_*^+$, and $\tau_*^-$, and bound them using all the estimates obtained previously throughout the proof of the proposition (in particular, we recall that $|y_1|\geq |y_0|/2$). Then, we obtain the following estimates in the considered domains for $t_0$, $y_0$ and $\e$:
    \begin{align*}
      |f_{t_0}| &\leq \tilde C, \quad |\partial_{t_0}f_{t_0}|\leq \tilde C, \quad  |\partial_{y_0}f_{t_0}|\leq \frac{\tilde C}{|y_0|},\quad  |\partial^2_{t_0}f_{t_0}|\leq \tilde C,  \quad   |\partial_{t_0}\partial_{y_0}f_{t_0}|\leq \frac{\tilde C}{|y_0|}, \quad  |\partial^2_{y_0}f_{t_0}|\leq \frac{\tilde C}{|y_0|},\\
      |\partial_{t_0}\bar t_0| &\leq \tilde C, \quad  |\partial_{y_0}\bar t_0|\leq \tilde C,\quad  |\partial^2_{t_0}\bar t_0|\leq \tilde C\e,  \quad   |\partial_{t_0}\partial_{y_0}\bar t_0|\leq \frac{\tilde C\e}{|y_0|}, \quad  |\partial^2_{y_0}\bar t_0|\leq \frac{\tilde C\e}{|y_0|},\\
 |f_{y_0}| &\leq \tilde C, \quad |\partial_{t_0}f_{y_0}|\leq \tilde C, \quad  |\partial_{y_0}f_{y_0}|\leq \tilde C,\quad  |\partial^2_{t_0}f_{y_0}|\leq \tilde C,  \quad   |\partial_{t_0}\partial_{y_0}f_{y_0}|\leq\tilde C, \quad  |\partial^2_{y_0}f_{y_0}|\leq\tilde C.
    \end{align*}
We omit further details because they are not difficult, but cumbersome.
                  
\section{The Localized Impact Map: Proof of Proposition~\ref{P1}}\label{propP1}
As noted above, many dependencies on $\e$ are omitted as it is set fixed throughout the proof.

\noindent{\bf Exactness of $F$ with respect to $I\d\phi$.}
We first show that the map $F(\phi,I)=(\bar{\psi}^{-1}\circ\bar{\mathcal{P}}_{\e}\circ\bar{\psi})(\phi,I)$ generated by applying the conjugation $\bar{\psi}$ of~\eqref{eq:phiI} to the exact symplectic map $\bar{\mathcal{P}}_{\e}$ of~\eqref{ImpactAngleEnergy} is still exact symplectic. We know, by construction, that there is a function $\bar V(t_0,E_0)$, depending $2\pi$-periodically in $t_0$, such that $\bar{\mathcal{P}}_{\e}^*(E_0\d t_0)=E_0\d t_0+\d \bar{V}(t_0,E_0)$. Consequently, if we define $V(\phi,I)=\tfrac{1}{\sqrt{-\Eot}}\bar{V}(\phi, \sqrt{-\Eot}\,I)$, then we have:
\begin{align*}
  F^*(I\d\phi)&=\bar{\psi}^*\,\bar{\mathcal{P}}_{\e}^*\,(\bar{\psi}^{-1})^*(I\d\phi)=\bar{\psi}^*\,\bar{\mathcal{P}}_{\e}^*\left(\frac{1}{\sqrt{-\Eot}}\,E_0\d t_0\right)\\
  &=\frac{1}{\sqrt{-\Eot}}\bar{\psi}^*(E_0\d t_0+\d \bar{V}(t_0,E_0))=I\d\phi+\bar V(\phi,I).
  \end{align*}

\noindent{\bf Explicit expression of $F$.} Since Proposition~\ref{P} gives us a close control over the properties of the map ${\mathcal{P}}_{\e}(t_0,y_0)$ of~\eqref{eq:Pe}, it is natural to express the map $F$ of~\eqref{eq:FP1} in terms of it. Given a couple $(\phi,I)$, we denote  $(\bar t_0,\bar y_0)={\mathcal{P}}_{\e}(t_0,y_0)$, and $(\bar\phi,\bar I)=F(\phi,I)$. Then, we want to express $(\bar\phi,\bar I)$ in terms of $\phi$, $I$, $\alpha_{\e}$, $f_{t_0}$, and $f_{y_0}$. Here, we are only concerned with the formal aspects of these calculations, so we do not discuss in which domain the considered expressions are well defined. Using the relations $t_0=\phi$, $y_0=\sqrt{-\sqrt{2}\,y_0^*\,I}$ (see~\eqref{eq:y0}), $\bar\phi=\bar t_0$, and $\bar I=-\tfrac{(\bar y_0)^2}{\sqrt{2}y_0^*}$, we have:
\begin{align*}
\bar \phi=\bar t_0=t_0+ \alpha_{\e}(y_0)+\e f_{t_0}(t_0,y_0;\e)=\phi+\alpha_{\e}\left(\sqrt{-\sqrt{2}\,y_0^*\,I}\right)+\e f_{t_0}\left(\phi,\sqrt{-\sqrt{2}\,y_0^*\,I};\e\right).
  \end{align*}
Consequently, $\alpha(I)$ is as given in the statement of Proposition~\ref{P1} and
\begin{equation}\label{eq:fphi}
f_\phi(\phi,I)=\e f_{t_0}\left(\phi,\sqrt{-\sqrt{2}\,y_0^*\,I};\e\right).
\end{equation}
Furthermore,
\begin{align*}
  \bar I=&
  -\frac{(\bar y_0)^2}{\sqrt{2}y_0^*}=
  -\frac{1}{\sqrt{2}y_0^*}\left(y_0+\e f_{y_0}(t_0,y_0;\e)\right)^2\\
  =& -\frac{1}{\sqrt{2}y_0^*}\left(y_0^2+2\,\e\, y_0\, f_{y_0}(t_0,y_0;\e)+\e^2(f_{y_0}(t_0,y_0;\e))^2\right)\\
  =& I-\frac{\e}{\sqrt{2}y_0^*}\left(2\, y_0\, f_{y_0}(t_0,y_0;\e)+\e(f_{y_0}(t_0,y_0;\e))^2\right).
 \end{align*}
Consequently,
\begin{equation}\label{eq:fI}
f_I(\phi,I)=-\frac{\e}{\sqrt{2}y_0^*}\left(2\, \sqrt{-\sqrt{2}\,y_0^*\,I}\, f_{y_0}(\phi,\sqrt{-\sqrt{2}\,y_0^*\,I};\e)+\e\left(f_{y_0}(\phi,\sqrt{-\sqrt{2}\,y_0^*\,I};\e)\right)^2\right).
\end{equation}

\noindent{\bf Bounding the action of $\bar\psi$.} 
 As noted above, we can express the conjugation $\bar{\psi}$ of~\eqref{eq:phiI} as $t_0=\phi$ and $y_0=\sqrt{-\sqrt{2}\,y_0^*\,I}$. 
Now, we want to show that if $(\phi,I)\in \Delta(\overline{\rho})\times{D}(I^*_0,\tilde\rho)$ (see~\eqref{Delta-norm} and~\eqref{set:D}), then $(t_0,y_0)\in\mathcal{D}^{+}(4,\overline{\rho},\tilde{\rho})$ (see~\eqref{eq:calD+}) by establishing some quantitative bounds on the obtained value for $y_0$. Clearly, for $t_0$ this assertion is obvious. Now let us denote $ I=\Iot+\Delta I $, with $y_0^ \ast=-\sqrt{2}I^\ast$ and $| \Delta I |<\tilde{ \rho}  $, which means that $ I\in {D}(I^*_0,\tilde\rho) $. Then, we have:
\begin{align*}
	y_0=y_0^*\sqrt{1-\frac{\sqrt{2}\,\Delta I}{y_0^*}}=
	y_0^*\sum_{j=0}^\infty(-1)^j\binom{1/2}{j}\left(\frac{\sqrt{2}\,\Delta I}{y_0^*}\right)^j.
\end{align*}
Since $y_0^*>5$ and $\tilde\rho<1$, we clearly have that $\tfrac{\sqrt{2}\,|\Delta I|}{y_0^*}<\tfrac{1}{2}$, meaning that the series above is well defined. 
Then, it follows :
\[
	\begin{aligned}
	|y_0-y_0^*| &\leq
	y_0^*\sum_{j=1}^\infty\left|\binom{1/2}{j}\right|\left(\frac{\sqrt{2}\,|\Delta I|}{y_0^*}\right)^j=y_0^*\left(1-\sum_{j=0}^\infty\left|\binom{1/2}{j}\right|\left(\frac{\sqrt{2}\,|\Delta I|}{y_0^*}\right)^j\right)\\
	&=
	y_0^*\left(1-\sqrt{1-\frac{\sqrt{2}\,|\Delta I|}{y_0^*}}\right)\leq |\Delta I|<\tilde\rho,
\end{aligned}
\]
where we have used that $\left|\binom{1/2}{j}\right|=(-1)^{j-1}\binom{1/2}{j}$ and that by the Mean Value Theorem we can bound $|1-\sqrt{1-x}|\leq \tfrac{\sqrt{2}|x|}{2}$ if $|x|\leq 1/2$. 
In particular, using that $y_0^\ast$ is a real number and bounding the size of the imaginary part of $y_0-y_0^\ast$ by its modulus, we conclude that
\[
|{\rm Im}(y_0)|=|{\rm Im}(y_0-y_0^*)|\leq |y_0-y_0^*| <\tilde\rho,
\]
which means that $\bar{\psi}(\Delta(\overline{\rho})\times{D}(I^*_0,\tilde\rho))\subset \mathcal{D}^{+}(4,\overline{\rho},\tilde{\rho})$, as wanted. 
Moreover, we also observe that
\[
|y_0|\geq y_0^*-|y_0-y_0^*|>y_0^*-\tilde\rho>4,
\qquad
|y_0|\leq y_0^*+\tilde\rho.
\]
Using these bounds, recalling the explicit relation between $y_0$ and $I$, given by $y_0=y_0(I)=\sqrt{-\sqrt{2}\,y_0^*\,I}$, and computing the derivatives $\partial_Iy_0=-\tfrac{\sqrt{2}}{2}\tfrac{y_0^*}{y_0}$ and $\partial_I^2y_0=-\tfrac{(y_0^*)^2}{2\,y_0^3}$, one easily obtains the estimates:
\begin{equation}\label{boundsy0}
  \frac{\sqrt{2}}{2}\frac{y_0^*}{y_0^*+\tilde\rho}\leq |\partial_Iy_0|\leq \frac{\sqrt{2}}{2}\frac{y_0^*}{y_0^*-\tilde\rho},
  \quad
  |\partial_I^2y_0|\leq \frac{(y_0^*)^2}{2\,(y_0^*-\tilde\rho)^3},
  \qquad\forall I\in{D}(I^*_0,\tilde\rho).
\end{equation}
   \\
{\bf Bounding $\alpha'(I)$, $\alpha''(I)$, $f_\phi$ and $f_I$.}
      The first consequence of~\eqref{boundsy0} are the bounds on $\alpha'(I)$ and $\alpha''(I)$ in the statement. Actually, we use that $ \alpha(I)=4(1 - a_0 ^{2}\e^{2})^{-1} y_0(I) $. Then, the bounds $\frac{5\sqrt{2}}{3}<|\alpha'(I)|<\frac{10\sqrt{2}}{3}$ and $|\alpha''(I)|\leq \frac{25}{24}$, $\forall I\in{D}(I^*_0,\tilde\rho)$, are straightforward from~\eqref{boundsy0}, by observing that since $|a_0\e|<1/2$ (see Proposition~\ref{P}) then $1 < (1 - a_0 ^{2}\e^{2})^{-1} < 4/3$, and recalling that $y_0^\ast>5$ and $\tilde\rho\in(0,1)$.

    As we have shown that $\bar{\psi}(\Delta(\overline{\rho})\times{D}(I^*_0,\tilde\rho))\subset \mathcal{D}^{+}(4,\overline{\rho},\tilde{\rho})$, then Proposition~\ref{P} allows us to conclude that expressions for $f_\phi$ and $f_I$ in~\eqref{eq:fphi} and~\eqref{eq:fI} are well defined and analytic if $(\phi,I)\in\Delta(\overline{\rho})\times{D}(I^*_0,\tilde\rho)$, provided that $0<\e<\e^*_{\mathcal{P}}$ and $y_0^*>5$. Expressions in~\eqref{eq:fphi} and~\eqref{eq:fI} also allow us to obtain explicit formulas for the first and second order derivatives of $f_\phi$ and $f_I$, with respect to $\phi$ and $I$, in terms of the derivatives of $f_{t_0}$, $f_{y_0}$ and $y_0(I)=\sqrt{-\sqrt{2}\,y_0^*\,I}$. Then, by considering bounds above on $y_0(I)$ and its derivatives as well those on $f_{t_0}$, $f_{y_0}$ and its derivatives in $\mathcal{D}^{+}(4,\overline{\rho},\tilde{\rho})$ provided by Proposition~\ref{P}, it is not difficult to conclude the existence of a constant $\bar C$ for which hold all the bounds on the statement on $f_\phi$, $f_I$ and its derivatives.    

\section{Exact sympletic structure: Proof of Proposition~\ref{ExactSympleticGeneral}}\label{sympletic}
It is well known that that the time-$T$-flow of a Hamiltonian system, the Poincaré map originating from an autonomous Hamiltonian system (at a fixed energy level) and the stroboscopic map associated to a periodic in time  Hamiltonian system are exact sympletic (see e.g.~\cite{Treschev2010}). Thus, we dedicate this section to the proof of Proposition~\ref{ExactSympleticGeneral}, which basically shows that the impact map of a smooth periodic Hamiltonian system has the exact sympletic character if it is expressed using time-energy variables.
\begin{proof}
	Let $\mathcal{H}(\xx,\yy)$ be an autonomous Hamiltonian with $n$ degrees of freedom with respect to the $2$-form $\d\xx\wedge\d\yy$. We denote by $\varphi^T(\xx_0,\yy_0)=(\xx^{\tau}(\xx_0,\yy_0), \yy^{\tau}(\xx_0,\yy_0))$ the flow associated to the system originating from  $\mathcal{H}$, where $\tau$ represents the time variable. It is a known result in the literature (see e.g.~\cite{Treschev2010}) that the time-$T$-map, for every $T\in\R, $ is exact sympletic. 
Specifically,
	\[
	\xx^{T}\d \yy^{T}=(\varphi^T)^*(\xx_0 \d \yy_0)=\xx_0 \d \yy_0+\d \mathcal{S}^{T}(\xx_0,\yy_0),
	\]
with $\varphi^T$ the time-$T$-flow of $\mathcal{H}$ and with $\mathcal{S}^{T} $ being given by
	\[
		\mathcal{S}^{T}(\xx_0,\yy_0)=\int_{0}^{T}[\langle \xx^{s}(\xx_0,\yy_0),\partial_\tau \yy^{s}(\xx_0,\yy_0)\rangle+\mathcal{H}(\varphi^s(\xx_0,\yy_0))]ds.
	\]
	If set $T=T(\xx_0,\yy_0)$ to depend on the initial conditions, we easily get the relation
	\begin{equation}\label{RelationT}
		\xx^{T(\xx_0,\yy_0)}\d(\yy^{T(\xx_0,\yy_0)})=\xx_0\d\yy_0+\d(\mathcal{S}^{T(\xx_0,\yy_0)}(\xx_0,\yy_0))-\mathcal{H}(\varphi^{T(\xx_0,\yy_0)}(\xx_0,\yy_0))\d (T(\xx_0,\yy_0)).
	\end{equation}
	We then apply the construction above to the 2-degrees of freedom autonomized Hamiltonian $\mathcal{H}(x,t,y,E)=H(x,y,t)+E$ of the statement of the proposition, being $\xx=(x,t)$ and $\yy=(y,E)$, and we select $T(x_0,t_0,y_0,E_0)>0$ as the impact time defined as $\tilde\tau(t_0,E_0)$ in~\eqref{ImpTimeXi}, but now for each initial condition $(x_0,t_0,y_0,E_0)$ close to the point $(0,t_0,y(t_0,E_0),E_0)$ of the section $\Xi$. Hence, it verifies:
	\[
	x^{T(x_0,t_0,y_0,E_0)}(x_0,t_0,y_0,E_0)=0.
	\] 
	We define the map
	\begin{align*}
		\Phi\;\colon\; &U \longrightarrow \Xi\\
		\hspace{0.3cm}	(t_0,&E_0)\longmapsto (0,t_0,y(t_0,E_0),E_0),
	\end{align*}
	and perform the pull-back $\Phi^\ast$ on equation~\eqref{RelationT} written in the context at hand. Using that $\tilde\tau(t_0,E_0)=(T\circ\Phi)(t_0,E_0)$, the definition of the impact map $(t_1,E_1)=F(t_0,E_0)$, that $\mathcal{H}=0$ along the solutions starting at $\Xi$ and that the coordinate $x$ vanishes at booth the points $\Phi(t_0,E_0)$ and the point obtained after integrating time $\tilde\tau(t_0,E_0)$ the point $\Phi(t_0,E_0)$, we conclude that
	\[
	F^\ast(t_1\d E_1)=t_0\d E_0+\d\bar{S}(t_0,E_0),
	\]
	where we are introducing
	\[
	\bar{S}(t_0,E_0)=\mathcal{S}^{\tilde\tau(t_0,E_0)}(0,t_0,y(t_0,E_0),y_0).
	\]
	By using the relation $\d(t_1\cdot E_1)=t_1\d E_1+E_1\d t_1$, we have:
	\begin{align*}
		F^\ast(E_1\d t_1) &=\d(F^\ast(t_1\cdot E_1))-F^\ast(t_1\d E_1)=
		\d(F^\ast(t_1\cdot E_1))-t_0\d E_0-\d\bar{S}(t_0,E_0)\\
		& =E_0\d t_0+\d S(t_0,E_0),
	\end{align*}
	where
	\[
	S(t_0,E_0)=F^\ast(t_1\cdot E_1)-t_0\cdot E_0-\bar{S}(t_0,E_0).
	\]
	We observe that using that $t^s(\Phi(t_0,E_0))=t_0+s$ but that $x^s(\Phi(t_0,E_0))$, $y^s(\Phi(t_0,E_0))$, and $E^s(\Phi(t_0,E_0))$ depend on $t_0$ in a $2\pi$-periodic way, we can write
	\[
	\bar{S}(t_0,E_0)=\int_0^{\tilde\tau(t_0,E_0)}t_0\cdot\partial_{\tau}E^s(\Phi(t_0,E_0))\,\d s+\tilde{S}(t_0,E_0)=t_0\cdot\left(f_{E_0}(t_0,E_0)-E_0\right)+\tilde{S}(t_0,E_0),
	\]
	where $\tilde{S}(t_0,E_0)$ is $2\pi$-periodic in $t_0$. Then, using that $F^\ast(t_1\cdot E_1)=(t_0+f_{t_0}(t_0,E_0))\cdot f_{E_0}(t_0,E_0)$, we conclude that
	\[
	{S}(t_0,E_0)=f_{t_0}(t_0,E_0)\cdot f_{E_0}(t_0,E_0)-\tilde{S}(t_0,E_0),
	\]
	is $2\pi$-periodic in $t_0$, as claimed in the statement.
\end{proof}

\section*{Competing interests} To the best of our knowledge, no conflict of interest, financial or other, exists.

\section*{Authors' contributions} All persons who meet authorship criteria are listed as authors, and all authors certify that they have participated sufficiently in the work to take public responsibility for the content, including participation in the conceptualization, methodology, formal analysis, investigation, writing-original draft preparation and writing-review \& editing.

\section*{Acknowledgments}
T. M-Seara and J. Villanueva are supported by the grant AGRUPS-2023: Grup de Sistemes Dinàmics de la UPC.
T. M-Seara is supported by the grant PID-2021-122954NB-100 funded by MCIN/AEI/10.13039/501100011033 and ``ERDF A way of making Europe'', the Catalan Institution for Research and Advanced Studies (via an ICREA Academia Prize 2019) and the Spanish State Research Agency, through the Severo Ochoa and Mar\'{\i}a de Maeztu Program for Centers and Units of Excellence in R\&D (CEX2020-001084-M). LVMFS is partially supported by S\~{a}o Paulo Research Foundation (FAPESP) grants 2018/22398-0 and 2021/11515-8.

\bibliographystyle{abbrv}
\bibliography{referencesJTL}

\end{document}